\documentclass[12pt]{article}

\usepackage{fullpage}
\usepackage{amsfonts}
\usepackage{enumitem}
\usepackage{amsmath}
\usepackage{multirow}
\usepackage{array}
\usepackage{amssymb}
\usepackage{amsthm}
\usepackage{hyperref}
\usepackage{mathrsfs}
\usepackage{cite}
\usepackage{aliascnt}
\usepackage{centernot}
\usepackage{dsfont}
\usepackage{stmaryrd}
\usepackage{tikz-cd}
\usepackage{longtable}
\usepackage{rotating}

\DeclareMathOperator{\GL}{GL}
\DeclareMathOperator{\PSp}{PSp}

\DeclareMathOperator{\SU}{SU}
\DeclareMathOperator{\GU}{GU}
\DeclareMathOperator{\SL}{SL}
\DeclareMathOperator{\PSL}{PSL}
\DeclareMathOperator{\Sp}{Sp}
\DeclareMathOperator{\PGL}{PGL}

\DeclareMathOperator{\PSO}{PSO}
\DeclareMathOperator{\PSU}{PSU}
\DeclareMathOperator{\Aut}{Aut}
\DeclareMathOperator{\Out}{Out}

\DeclareMathOperator{\lcm}{lcm}

\DeclareMathOperator{\Gal}{Gal}

\DeclareMathOperator{\diag}{diag}
\DeclareMathOperator{\adiag}{adiag}
\DeclareMathOperator{\soc}{soc}
\DeclareMathOperator{\Sz}{Sz}

\DeclareMathOperator{\PGammaL}{P\Gamma L}
\DeclareMathOperator{\PSigmaL}{P\Sigma L}
\DeclareMathOperator{\PGammaU}{P\Gamma U}
\DeclareMathOperator{\PSigmaU}{P\Sigma U}
\DeclareMathOperator{\POmega}{P\Omega}

\begin{document}
\setlength{\parindent}{15pt}

\newtheorem{thm}{Theorem}[section]
\newcommand{\thmautorefname}{Theorem}

\numberwithin{equation}{thm}

\newtheorem*{thm*}{Theorem}

\newaliascnt{prop}{thm}
\newtheorem{prop}[prop]{Proposition}
\aliascntresetthe{prop}
\newcommand{\propautorefname}{Proposition}

\newaliascnt{lem}{thm}
\newtheorem{lem}[lem]{Lemma}
\aliascntresetthe{lem}
\newcommand{\lemautorefname}{Lemma}

\newaliascnt{crl}{thm}
\newtheorem{crl}[crl]{Corollary}
\aliascntresetthe{crl}
\newcommand{\crlautorefname}{Corollary}

\newtheorem{thmintro}{Theorem}
\renewcommand\thethmintro{\Alph{thmintro}}
\newcommand{\thmintroautorefname}{Theorem}

\newaliascnt{qstintro}{thmintro}
\newtheorem{qstintro}[qstintro]{Question}
\renewcommand\theqstintro{\Alph{qstintro}}
\aliascntresetthe{qstintro}
\newcommand{\qstintroautorefname}{Question}

\newaliascnt{conj}{thm}
\newtheorem{conj}[conj]{Conjecture}
\aliascntresetthe{conj}
\newcommand{\conjautorefname}{Conjecture}

\theoremstyle{definition}
\newaliascnt{dfn}{thm}
\newtheorem{dfn}[dfn]{Definition}
\aliascntresetthe{dfn}
\newcommand{\dfnautorefname}{Definition}
\newaliascnt{rmk}{thm}
\newtheorem{rmk}[rmk]{Remark}
\aliascntresetthe{rmk}
\newcommand{\rmkautorefname}{Remark}

\newaliascnt{exm}{thm}
\newtheorem{exm}[exm]{Example}
\aliascntresetthe{exm}
\newcommand{\exmautorefname}{Example}

\newaliascnt{qst}{thm}
\newtheorem{qst}[qst]{Question}
\aliascntresetthe{qst}
\newcommand{\qstautorefname}{Question}

\newtheorem*{qst*}{Question}

\makeatletter
\def\Ddots{\mathinner{\mkern1mu\raise\p@
\vbox{\kern7\p@\hbox{.}}\mkern2mu
\raise4\p@\hbox{.}\mkern2mu\raise7\p@\hbox{.}\mkern1mu}}
\makeatother

\title{Finite groups with many elements of the same order}
\author{Ryan McCulloch, Lee Tae Young}

\maketitle

\begin{abstract}
We study a conjecture by Deaconescu on the solubility of finite groups with claims that if more than half of the elements in a finite group has the same order $k$, then the group is soluble. We show that the original conjecture fails by presenting some counterexamples. By restricting to a fixed $k$, the conjecture may or may not hold depending on $k$. We prove that if $k$ is a power of a prime other than $2$ or $3$, or if $k=2, 3$ or $4$, then the conjecture holds, while it fails for many other choices of $k$ including all multiples of $2$ and $3$ which are larger than $5$. For $k=4$ we also find the sharp upper bound of the ratio of elements of order $4$ in non-soluble groups. We also prove that for all $k>1$, it is always possible to find a finite non-soluble group where at least $2/15$ of the elements have order $k$.
\end{abstract}

2020 Mathematics Subject Classification: Primary 20D60, Secondary 20D20

\tableofcontents

\section{Introduction}

In this paper, we study the following conjecture: 
\begin{conj}[M. Deaconescu \upshape{\cite[21.43]{notebook}}]\label{conjecture}
Suppose that for a fixed positive integer $k$ at least half of the elements of a finite group $G$ have order $k$. Then $G$ is soluble.
\end{conj}

A motivation for this conjecture is the case $k=2$. It is a simple exercise that if at least $3/4$ of the elements of a finite group are involutions, then the group must be abelian. On the other hand, there are nonabelian groups with the ratio of involutions among its elements being arbitrarily close to $3/4$, namely the direct product of the dihedral group $D_8$ with a large elementary abelian $2$-group. Liebeck and MacHale \cite{LM} classified the finite groups in which at least half of the elements are involutions, and indeed, they are all solvable. Mann \cite{Mann} showed that for each $\epsilon\in (0,1)$, if at least $\epsilon$ of the elements of a finite group $G$ are involutions, then there exists a normal subgroup $N$ such that both the index $|G:N|$ and the order of the derived subgroup $|N'|$ are bounded by some function of $\epsilon$. Later, Potter \cite{Potter} and Berkovich \cite{Berkovich} independently extended the classification to the groups where more than $4/15$ of the elements are involutions.

The conjecture in its original form fails. We will present several counterexamples throughout the paper, including \autoref{direct_prod} and \autoref{construction2}. However, if we fix the number $k$, then the conjecture sometimes holds. For example, it is an elementary fact that the conjecture holds for $k=1$, and we saw above that it also holds for $k=2$. 

In this paper, we either prove the conjecture or give a counterexample for all prime power $k$. We also present counterexamples for many other values of $k$. We summarize our main results as the following theorem.
\begin{thm}\label{mainthm1}
{\upshape(1)} The conjecture holds when $k$ is a power of a prime $p>3$, and when $k=2, 3$ and $4$.\\{\upshape(2)} The conjecture fails when $k>4$ is divisible by either $2$ or $3$.\\{\upshape(3)} The conjecture also fails for some $k$ which are not divisible by $2$ and $3$.
\end{thm}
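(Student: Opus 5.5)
The plan splits along the three parts of \autoref{mainthm1}. Part (1) is a reduction to groups with trivial soluble radical followed by a counting argument. Parts (2) and (3) come from one direct-product construction, fed by suitable small non-soluble groups.

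\textbf{Reduction for (1).} The basic observation is the following. If $N\trianglelefteq G$ and $g\in G$ has order $k$, then $gN$ has order dividing $k$ in $G/N$. Every coset of $N$ has exactly $|N|$ elements, so
\[
\frac{|\{g\in G:\ |g|=k\}|}{|G|}\ \le\ \frac{|\{x\in G/N:\ x^k=1\}|}{|G/N|}.
\]
Take $N=R$, the soluble radical of a non-soluble $G$. It suffices to show that in a non-trivial group $\bar G$ with $R(\bar G)=1$, fewer than half of the elements satisfy $x^k=1$, for the listed $k$.

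\textbf{Bounds for (1).} For $k=p^a$ with $p>3$, the elements with $x^k=1$ are $p$-elements. I would bound them by $\sum_P |P|=|\bar G:N_{\bar G}(P)|\,|P|$ over Sylow $p$-subgroups $P$, working coset by coset modulo the socle $T_1\times\dots\times T_m$ of $\bar G$. This reduces the question to almost simple groups and to the action on the factors. There I would invoke the known estimates that the proportion of $p$-elements ($p\ge5$) in almost simple groups is small: small ranks via the classification with explicit checks such as $A_5$ and $\PSL_2(q)$, and large groups via the estimate that the $p$-singular proportion tends to $1$.

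For $k=2$, the result is Liebeck--MacHale. For $k=3$ and $k=4$, the same reduction applies, and one must bound the proportion of elements of order dividing $3$, respectively $4$, in groups with trivial soluble radical by something below $1/2$. For $k=4$ a sharper analysis is needed to obtain the claimed sharp constant. I expect these almost simple bounds, above all $k=4$ where $2$-elements can be numerous, to be the main obstacle. I would first try $\PSL_2(q)$ and alternating groups, and then a general argument using centralizer sizes of $2$-elements.

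\textbf{Construction for (2) and (3).} Take $G=H\times C_k^n$. In $C_k^n$ the proportion of elements of order exactly $k$ tends to $1$ as $n\to\infty$, since elements of smaller order lie in $\bigcup_{p\mid k}(C_k^{n})^{p}$, which has proportion at most $\sum_{p\mid k}p^{-n}$. An element $(h,c)$ with $|c|=k$ has order $k$ exactly when $h^k=1$. So it suffices to find a non-soluble $H$ in which more than half of the elements satisfy $h^k=1$.

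For $6\mid k$, take $H=A_5$: the $1+15+20=36$ elements of order $1,2,3$ give proportion $3/5$. For $k$ even with $5\mid k$, again take $A_5$: the $1+15+24=40$ elements of order $1,2,5$ give proportion $2/3$. The remaining even cases, for instance $k=8$, need other groups $H$. I would try extensions of a large $2$-group by $A_5$ or $\PSL_2(7)$ in which most elements of each $2$-element coset have $2$-power order, and I would make these explicit as in \autoref{construction2}.

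For (3), use $H=\Sz(q)$ and $k=\lcm(q-1,\,q+r+1,\,q-r+1)$, which is coprime to $6$. Elements of odd order form a proportion $1-O(1/q)$ of $\Sz(q)$, so $G$ is non-soluble with more than half of its elements of order $k$.
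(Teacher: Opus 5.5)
\textbf{Part (1).} Your Sylow count $|G:\mathbf{N}_G(P)|\,|P|$ is exactly \autoref{self-normalising}. On its own it only bounds $\rho_{p^a}(G)$ by roughly $1/|\mathbf{N}_G(P):P|$, which says nothing when $P$ is self-normalising. The missing idea is the theorem of Guralnick--Malle--Navarro \cite{GMN}: for $p\ge 5$ a non-soluble group has no self-normalising Sylow $p$-subgroup, and for $p=3$ the only obstruction is a composition factor $\PSL_2(3^{3^a})$. With it, $|\mathbf{N}_G(P):P|\ge 2$, and your count gives $\rho_{p^a}(G)<\frac12$ at once for non-soluble $G$ and $p\ge5$, with no reduction to almost simple groups.

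What you propose instead is not an argument. You appeal to unspecified \emph{known estimates} for almost simple groups, plus explicit checks in small rank. But the bounded-rank families are infinite, and the bound is nearly sharp there: $\rho_{(q\pm1)/2}(\PSL_2(q))\to\frac12$, and $(q\pm1)/2$ can be prime. Moreover, reducing coset by coset modulo the socle leaves you bounding cosets $xT\subseteq\Aut(T)$, not almost simple groups.

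For $k=3$ you would still need an explicit treatment of the GMN exception: the cosets of $\PSL_2(3^{3^a})$ under a field automorphism $F$ of order $3$. The paper shows that at most a proportion $4/(3^{3^{a-1}}-1)$ of $g$ satisfy $gg^Fg^{F^2}=1$, and $1/12$ for $a=1$. For $k=4$ you give no method at all. The paper needs three steps:
\begin{itemize}
\item a reduction from monolithic to almost simple groups, in which cosets whose permutation part is a $2$-cycle are controlled by the bound $\rho_2(xT)\le 4/15$ of \autoref{k=2_Coset}, and the case $n=2$ is handled by minimality;
\item a CFSG case analysis producing non-soluble proper subgroups that contain a Sylow $2$-subgroup;
\item explicit computations for the groups that survive.
\end{itemize}

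\textbf{Parts (2) and (3).} Your direct-product step is \autoref{construction2}. Your part (3) is correct: with $\Sz(q)$ and $k=(q-1)(q^2+1)$, which is prime to $6$, you get $\rho_k(\Sz(q))=1-(q+1)/q^2>\frac12$.

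Part (2), however, is not established. You only supply a group $H$ for $6\mid k$ and $10\mid k$. Odd multiples of $3$ ($k=9,21,27,\dots$) are never treated. Your plan for the remaining even $k$ cannot work, by your own quotient inequality: an extension $H$ of a $2$-group by $A_5$ or $\PSL_2(7)$ has $\rho_k(H)\le\rho_k(A_5)$, resp. $\rho_k(\PSL_2(7))$. These are too small: $\rho_8(A_5)=4/15$, $\rho_8(\PSL_2(7))=8/21$ and $\rho_{14}(\PSL_2(7))=5/12$.

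What is needed are almost simple groups whose outer cosets are rich in elements of the right order:
\begin{itemize}
\item $M_{10}$, with $\rho_8=31/45$, since every element outside $A_6$ has order $4$ or $8$;
\item $\PSigmaL_2(3^3)$, with $\rho_9=191/364$;
\item for primes $q>3$, $\PGammaL_2(2^q)$ and $\PSigmaL_2(3^q)$. By \autoref{PSigmaL2}, their outer cosets alone already contribute $\frac{q-1}{q}\rho_2(S_3)=\frac{2(q-1)}{3q}>\frac12$ to $\rho_{2q}$, resp. $\frac{q-1}{q}\rho_3(\PSL_2(3))=\frac{3(q-1)}{4q}>\frac12$ to $\rho_{3q}$.
\end{itemize}
Every $k>4$ divisible by $2$ or $3$ is a multiple of $6$, $8$, $9$, $2q$ or $3q$ for some prime $q>3$. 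So \autoref{examples_summary}(3) then finishes part (2).
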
  

The main difficulties of this problem for $k>2$, compared to the previously known case $k=2$, are the following. The works of Wall\cite{Wall} and Berkovich\cite{Berkovich} on the case $k=2$ use some properties of Frobenius-Schur indicator to translate the problem into a character theory problem. For $k\neq 2$, these properties fail for generalisations of Frobenius-Schur indicator, so their approach cannot be applied here. Liebeck and MacHale\cite{LM} used an alternative approach: they proved the existence of an inner automorphism which inverts more than half of the elements of a maximal abelian subgroup, and uses it to study the structure of the group. This approach also cannot be fully generalised to $k\neq 2$, since the proof depends on the fact that every subgroup is closed under taking inverses of elements. The analogous property for $k>2$ is that an automorphism $x$ does not necessarily invert an element $g$ in a subgroup but instead satisfies $gg^xg^{x^2}\cdots g^{x^{k-1}}=1$, but this does not imply that $g^x$ is also in the same subgroup. Instead of these approaches, we use an inequality (\autoref{self-normalising}) on the normalisers of certain subgroups such as the Sylow subgroups to prove the conjecture for certain $k$'s.

We also study the problem for ratios other than $1/2$. Specifically, we show that as long as $k\neq 1$, the conjecture fails if we replace the ratio $1/2$ by $2/15$. Also, for multiples of $2$ and $3$ except for their powers, the conjecture fails for ratios much larger than $1/2$.
\begin{thm}
Suppose that $k$ has a prime divisor $q\neq 2, 3$. Then \\{\upshape(1)} there exists a finite non-soluble group such that more than $(q-1)/(6q)$ of its elements have order $k$. Also, for $k=2$ and $3$, there exist finite non-soluble groups such that at least $4/15$ and $7/20$ of its elements have order $k$, respectively. \\{\upshape(2)} If in addition $k$ is divisible by $2$ or $3$, then there exists a finite non-soluble group such that more than $2(q-1)/(3q)$ or $3(q-1)/(4q)$, respectively, of its elements have order $k$.
\\{\upshape(3)} If $k$ is divisible by the exponent of a finite non-soluble group, then for any $0<\epsilon<1$, there exists a finite non-soluble group such that more than $\epsilon$ of its elements have order $k$.
\end{thm}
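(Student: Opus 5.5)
The plan for all three parts is the same: take a non-soluble group $H$, and put beside it (as a direct factor, or as a normal subgroup it acts on) a large abelian group in which almost every element has a prescribed order. Then count elements of order exactly $k$ coset by coset. The basic fact I will use is this. Let $P=\prod_{p\mid k}(C_{p^{e_p}})^N$, where $p^{e_p}\,\|\,k$. The proportion of elements of $P$ of order exactly $\prod p^{e_p}$ is $\prod_{p}(1-p^{-N})$, which tends to $1$ as $N\to\infty$. In $H\times P$, a pair $(h,c)$ with $c$ of order $k$ has order $\lcm(|h|,k)$, and this equals $k$ exactly when $|h|$ divides $k$.

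Part (3) follows at once. If $\exp(H)\mid k$ for a non-soluble $H$, then every $h$ qualifies. So the proportion of elements of order $k$ in $H\times P$ is $\prod_p(1-p^{-N})$, which exceeds $\epsilon$ for large $N$.

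For parts (1) and (2) the order of $h$ need not divide $k$, so I replace the direct factor by a semidirect product. Write $k=q^a m$ with $q\nmid m$. Let $V$ be a faithful $(\mathbb Z/q^a)A_5$-module in which every element of $A_5$ has nonzero fixed points; the permutation module on $5$ points works. Take $G=(V\rtimes A_5)\times P'$, where $P'$ is as above for the $q'$-part $m'$ of $k$ with the primes dividing $|h|$ removed.

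For $v\in V$ and $h\in A_5$ of order $r$ coprime to $q$, $(vh)^r=\prod_{i}v^{h^i}$ lies in $V$. Its projection to $C_V(h)$ is $r$ times the projection of $v$. Hence $vh$ has order $rq^{a}$ for a proportion at least $1-q^{-\dim C_V(h)}\ge (q-1)/q$ of the $v$, taking $v$ of full order in the fixed-point component. This gives elements of order $k$ whenever $r\mid m$ and $r\neq 1$.

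If $2\mid k$, I use $r=2$: the $15$ involutions of $A_5$ give proportion $\tfrac{15}{60}\cdot\tfrac{q-1}{q}$. Combining $r=2$ with $r=1$-type contributions, and using $A_4$-type counts or replacing $A_5$ by $\mathrm{SL}(2,5)$ or $S_5$, should yield the stated $2(q-1)/(3q)$. If $3\mid k$, I use $r=3$ together with the elements of order dividing $k$ in a suitable $H$ to reach $3(q-1)/(4q)$.

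For part (1) with $\gcd(k,6)=1$, the module must be chosen so that elements $h$ whose order does not divide $k$ still produce order $k$. I would instead take $h$ of order $5$ when $5\mid k$. Otherwise I would use a $V$ on which suitable elements act fixed-point-freely, so that $vh$ can have order $q^a$-related values. I would tune the choice so that a $\tfrac16$ portion of $H$, for example elements in the $q$-Sylow normaliser cosets, contributes a factor $(q-1)/q$.

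The small cases $k=2,3$ with ratios $4/15$ and $7/20$ are direct counts. For $k=2$ I would take $A_5\times C_2^N$, where the proportion of nontrivial elements is $(1+15)/60\to 4/15$. For $k=3$ I would take $A_5\times C_3^N$, which gives $(1+20)/60=7/20$.

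The main obstacle is part (1) when $k$ is coprime to $30$. There $A_5$ has no nontrivial element of order dividing $k$, so the whole contribution must come from the twisting by $V$: the elements $vh$ with $h$ of order $2$, $3$ or $5$ must acquire order exactly $k$ rather than $|h|q^a$. I would first try letting $A_5$ act on a module over $\mathbb Z/k$ built from several primes. Failing that, I would try a larger non-soluble group whose element orders are controlled, such as $\mathrm{PSL}(2,r)$ with $q\mid r\pm1$. The point is that a torus of order $q$ accounts for a $\tfrac{q-1}{2q}$-type proportion, which I would then need to reduce to the claimed $(q-1)/(6q)$ after accounting for coset restrictions.
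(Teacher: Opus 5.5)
Part (3) is correct and is the paper's argument (Lemma~\ref{direct_prod}, Proposition~\ref{construction2}). Your $A_5$ counts for $k=2,3$ are also the paper's examples. Note that they only approach $4/15$ and $7/20$: since $\rho_k^*<\rho_k$ and $\epsilon_2=4/15$, these values are attained only as $\rho_2(A_5)$ and $\rho_3(A_5)$, i.e.\ for order dividing $k$.

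\textbf{Parts (1) and (2) are not proved, and the module construction cannot prove them in general.} By Lemma~\ref{quo}, any $G$ with a quotient $A_5$ has $\rho_k^*(G)<\rho_k(G)\le\rho_k(A_5)$. So twisting by a $q$-module adds nothing, and $(V\rtimes A_5)\times P'$ can do no better than $A_5\times C_k^N$. That ceiling is far too low:
\begin{itemize}
\item If $\gcd(k,30)=1$, then $\rho_k(A_5)=1/60$, while part (1) needs more than $(q-1)/(6q)\ge 1/7$.
\item For $k=14$ you get at most $4/15$, where part (2) needs more than $4/7$.
\item For $k=21$ you get at most $7/20$, against the required $9/14$.
\end{itemize}
Replacing $A_5$ by $S_5$ or $\SL_2(5)$ only lowers these counts, so the claim that this ``should yield the stated $2(q-1)/(3q)$'' has nothing behind it. $A_5$ helps only when $k$ happens to share suitable primes with $|A_5|$, e.g.\ $5\mid k$ in part (1).

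The $\PSL_2(r)$ fallback does not close the gap either. For odd $r$ with $q\mid (r\mp1)/2$, the proportion of elements of order $q$ is $(q-1)/(r\mp1)$. This exceeds $(q-1)/(6q)$ only if $r\mp1\in\{2q,4q\}$. For $q=47$, none of $93,95,187,189$ is a prime power, and $47\pm1$ is not a power of $2$ as even $r$ would need. So already $k=47$ is not covered.

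\textbf{The missing idea} is to produce the elements of order $q$ from a field automorphism of order $q$, not from a module or a torus.
\begin{itemize}
\item Take $\PGammaL_2(2^q)=\PGL_2(2^q)\rtimes\langle\phi\rangle$ with $q>3$ prime. Every nontrivial power of $\phi$ has centraliser exactly $\PGL_2(2)\cong S_3$ in $\PGL_2(2^q)$, of order prime to $q$.
\item Lemma~\ref{Classical_Groups}(3), via Lemma~\ref{commuting}, then shows that $\PGL_2(2^q)$ together with the conjugates of $\PGL_2(2)\times(\langle\phi\rangle\setminus\{1\})$ partition the group.
\item Hence a proportion $(q-1)/q$ of all elements is conjugate to some $h\phi^i$ with $h\in S_3$ and $q\nmid i$, and such an element has order $o(h)q$.
\item This is Corollary~\ref{PSigmaL2}: $\rho_{dq}(\PGammaL_2(2^q))>\frac{q-1}{q}\rho_d(S_3)$. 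For $d=1$ this gives more than $\frac{q-1}{6q}$, and for $d=2$ more than $\frac{2(q-1)}{3q}$.
\item With $\PSigmaL_2(3^q)$ and $\mathbf{C}_{\PSL_2(3^q)}(\phi)=\PSL_2(3)\cong A_4$, one gets more than $\frac{q-1}{q}\rho_3(A_4)=\frac{3(q-1)}{4q}$.
\end{itemize}
The constants $1/6$, $2/3$, $3/4$ in the statement are exactly $\rho_1(S_3)$, $\rho_2(S_3)$, $\rho_3(A_4)$. Since $dq\mid k$, we have $\rho_k\ge\rho_{dq}$ for these groups. The direct-product step you already have (Proposition~\ref{construction2}) then gives a non-soluble group with more than the required proportion of elements of order exactly $k$.
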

Note that in part (3), the condition that $k$ has a prime divisor other than $2$ and $3$ becomes redundant by Burnside's $pq$-theorem.

Finally, as a part of the proof of \autoref{mainthm1}, we found the sharp upper bound of the ratio of elements of order $4$ in a finite non-soluble group, which we state here as a separate theorem. 
\begin{thm}
The ratio of elements of order $4$ in a finite non-soluble group is bounded above by $7/15$, and this bound is sharp, achieved by $S_5\times C_4^n$ as $n\rightarrow \infty$.
\end{thm}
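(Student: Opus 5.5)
We first check sharpness and then prove the bound by reducing to groups with trivial soluble radical.

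\emph{Sharpness.} In $S_5\times C_4^n$ an element $(s,c)$ has order $\mathrm{lcm}(|s|,|c|)$. If $c$ has order $4$, which happens for $4^n-2^n$ choices of $c$, then $(s,c)$ has order $4$ exactly when $|s|\in\{1,2,4\}$. In $S_5$ there are $1+10+15+30=56$ such elements, so the ratio is $(56/120)(1-2^{-n})\to 7/15$. Since the ratio is strictly below $7/15$ for each $n$, the bound $7/15$ is a supremum that is approached but never attained.

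\emph{Upper bound: reduction to a quotient.} Every element of order $4$ satisfies $x^4=1$, so it suffices to bound the proportion $\rho(G)$ of elements with $x^4=1$. For $N\trianglelefteq G$, the image of any such $x$ in $G/N$ also satisfies $\bar x^4=1$. Each coset has exactly $|N|$ elements, so $\rho(G)\le\rho(G/N)$. We apply this with $N=R(G)$, the soluble radical. Then $\bar G=G/R(G)$ is non-soluble with trivial soluble radical, so $\soc(\bar G)=T_1\times\cdots\times T_m$ with the $T_i$ non-abelian simple, and $\bar G$ embeds in $\Aut(\soc\bar G)$. It now suffices to show that $\rho(H)\le 7/15$ for every such group $H$. (Equality for $H=S_5$ is consistent with the strict bound above.)

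\emph{Sylow normaliser inequality.} All elements with $x^4=1$ are $2$-elements, so they lie in the union of the Sylow $2$-subgroups. Let $P\in\mathrm{Syl}_2(H)$ and let $a(P)$ be the number of elements of $P$ with $x^4=1$. Counting as in \autoref{self-normalising},
\[
\#\{x\in H: x^4=1\}\le 1+|H:N_H(P)|\,(a(P)-1),
\qquad\text{so}\qquad
\rho(H)<\frac{a(P)}{|P|}\cdot\frac{1}{|N_H(P):P|}.
\]
If $|N_H(P):P|\ge 3$, this gives $\rho(H)<1/3<7/15$. If $a(P)/|P|\le 7/15$, we are done as well. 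Otherwise $P$ is self-normalising and more than $7/15$ of the elements of $P$ satisfy $x^4=1$. This is a strong restriction on $P$; for instance it forces $\exp(P)\le 8$, and in practice much more.

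\emph{The hard case (main obstacle).} For non-abelian simple groups the self-normalising Sylow $2$ case is essentially classified via the classification of finite simple groups (Guralnick--Malle--Navarro). I would use that classification to reduce to $H$ with all $T_i$ of type $\PSL_2(q)$ or a few related families, together with their almost simple and wreath-type extensions. For these groups I would count $2$-elements of order dividing $4$ directly, using the known conjugacy class data of $\PGammaL_2(q)$ and the small groups $A_5$, $S_5$, $A_6$ and its extensions.

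When $m\ge 2$ I would bound $\rho(H)$ multiplicatively. Elements of the base group $\Aut(T)^m$ contribute at most $\rho(\Aut T)^m$-type fractions. Elements outside the base group permute the factors, and their $4$th powers constrain the coordinates, which gives a much smaller proportion. Together these should give a bound far below $7/15$.

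I expect the almost simple verification to be the main difficulty. One must show that $S_5\cong\PGL_2(5)$ is the unique maximiser with ratio $7/15$, and check the finitely many small cases ($A_5$, $A_6$, $S_6$, $\PSL_2(7)$, $\PGL_2(7)$, $\PSL_2(9)$-extensions, $\PSL_2(q)$ for $q\equiv\pm3\pmod 8$) by explicit class counts, using the normaliser bound for the general $q$.
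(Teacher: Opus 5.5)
There is a genuine gap, and it is in the almost simple case, which is where all the content of the theorem lies. Your sharpness argument and the reduction $\rho_4(G)\le\rho_4(G/R(G))$ are fine. One small undercount: the $30\cdot 2^n$ elements $(s,c)$ with $o(s)=4$ and $c^2=1$ also have order $4$, so the true ratio is $7/15-13/(60\cdot 2^n)$. This does not affect the limit.

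The Guralnick--Malle--Navarro theorem is about odd primes only. For $p=2$ a self-normalising Sylow subgroup is the typical situation: it holds in $S_n$ for all $n$, $\PSL_n(2)$, $\PGL_2(q)$ for odd $q\ge5$, most sporadic groups, and many groups of Lie type in odd characteristic. So it gives no reduction to ``$\PSL_2(q)$ or a few related families''. Your side claim that $\rho_4(P)>7/15$ forces $\exp(P)\le 8$ is also false. The dihedral group of order $2^m$ has $(2^{m-1}+4)/2^m>1/2$ of its elements satisfying $x^4=1$, yet has exponent $2^{m-1}$, and these groups occur as self-normalising Sylow $2$-subgroups of $\PGL_2(q)$. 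Consequently the Sylow normaliser inequality alone is often useless. For $S_8$, the Sylow $2$-subgroup $D_8\wr C_2$ is self-normalising with $\rho_4=7/8$. Your plan of ``explicit class counts'' cannot cover the infinitely many remaining almost simple groups.

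The missing idea is the one the paper uses throughout. Work with a \emph{minimal} counterexample $G$: non-soluble, $\rho_4(G)>7/15$, and $\rho_4(G)>\rho_4(K)$ for every non-soluble $K$ with $|K|<|G|$. Then apply \autoref{self-normalising} not only to $P$ but to \emph{any} proper subgroup $M$ containing a Sylow $2$-subgroup. This gives $\rho_4(M)\ge\rho_4(G)$, so every such $M$ must be soluble.

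The paper then exhibits non-soluble overgroups of a Sylow $2$-subgroup to force this solubility condition to fail:
\begin{itemize}
\item in characteristic $2$, maximal parabolics normalised by the outer $2$-part;
\item in odd characteristic, Aschbacher's $2$-local subgroups, $\mathcal{C}_1$ and $\mathcal{C}_2$ subgroups, and involution centralisers;
\item \autoref{normalise} handles the outer automorphisms in both cases.
\end{itemize}
Combined with Burness's classification of soluble maximal subgroups, this cuts everything down to a finite list, which is computed directly. $\PSL_2(q)$ is done by explicit counting in $D\langle f\rangle$ or $M\langle df\rangle$. $A_n$ and $S_n$ are handled via a non-Sylow subgroup $H$ with $|\mathbf{N}_{S_n}(H):H|\ge 4$.

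Your $m\ge 2$ sketch has a similar problem. The relevant quantities are coset ratios $\max_x\rho_4(xT)$, not $\rho_4(\Aut T)$, and these can exceed $7/15$; for example $\rho_4(S_5\setminus A_5)=2/3$. The paper proceeds as follows:
\begin{itemize}
\item it passes to a monolithic quotient with socle $T^{2^f}$;
\item it bounds the cosets with nontrivial permutation part using $\rho_2(xT)\le 4/15$, the Liebeck--MacHale/Potter bound in \autoref{k=2_Coset};
\item for two factors this only gives $(1+4/15)/2=19/30$, so it uses minimality again: the base part $G\cap\Aut(T)^2$ would have $\rho_4>2/3>\rho_4(G)$, a contradiction.
\end{itemize}
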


\section{Notations and basic properties}
In this section we fix some notations, and prove some basic properties including the reduction of the problem to finite non-soluble monolithic groups.

For a finite group $G$, let us denote by $G^n$ the direct product of $n$ copies of $G$. Also, for an element $g\in G$, let $o(g)$ denote the order of $g$. Let $C_d$ denote the cyclic group of order $d$, and let $A_n$ and $S_n$ denote the alternating and symmetric groups acting on $n$ points. The symbol $\mathbb{Z}_{>n}$ will denote the set of integers strictly larger than $n$. We will also use the following notation.

\begin{dfn}
For $k\in \mathbb{Z}_{>1}$ and a subset $S$ of a finite group $G$, let $\rho_k(S)$ and $\rho_k^*(S)$ be the ratio of elements of order dividing $k$ and exactly $k$ in $G$: $$\rho_k(S) := \frac{\left|\{g\in S : o(g)\text{ divides } k\}\right|}{|S|},\ \rho_k^*(S) := \frac{\left|\{g\in S : o(g)= k\}\right|}{|S|}.$$ Also, let $$\epsilon_k = \sup\{\rho_k(G)\mid G \text{ finite non-soluble}\}$$ and $$\epsilon_k^* = \max\{\rho_k(G)\mid G \text{ finite non-soluble}\}.$$ Note that if $\epsilon_k^*$ exists, then $\epsilon_k^* = \epsilon_k$. Also, for every group $G$ and every $k$, we have $\rho_k(G)>\rho_k^*(G)$ since the identity element is counted in $\rho_k(G)$ but not in $\rho_k^*(G)$.
\end{dfn}

We first present some counterexamples for \autoref{conjecture}. Our counterexamples will be based on the following elementary observation:

\begin{lem}\label{direct_prod}
Let $H$ be a finite group of exponent $e$. Then $\lim_{n\rightarrow \infty}\rho_e^*(H^n)=1$.
\end{lem}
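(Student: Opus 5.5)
We prove this by a direct counting argument, bounding the proportion of elements of $H^n$ whose order is a proper divisor of $e$. In $H^n$ the order of $(h_1,\dots,h_n)$ is $\lcm(o(h_1),\dots,o(h_n))$, which always divides $e$. So an element fails to have order exactly $e$ only if this lcm is a proper divisor of $e$. Any proper divisor of $e$ divides $e/p$ for some prime $p$ dividing $e$. It therefore suffices to show that, for each of the finitely many primes $p\mid e$, the proportion of elements of $H^n$ whose order divides $e/p$ tends to $0$.

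Fix a prime $p\mid e$ and write $p^a$ for the exact power of $p$ dividing $e$. Since $e$ is the exponent of $H$, which is the lcm of the element orders, some element $x\in H$ has order divisible by $p^a$. Hence the set
$$S_p=\{h\in H : o(h)\text{ divides } e/p\}$$
is a proper subset of $H$. An element $(h_1,\dots,h_n)$ has order dividing $e/p$ exactly when every $h_i\in S_p$. So the proportion of such elements in $H^n$ is $\left(|S_p|/|H|\right)^n=\rho_{e/p}(H)^n$, and this tends to $0$ because $\rho_{e/p}(H)<1$.

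By the union bound,
$$1-\rho_e^*(H^n)\le \sum_{p\mid e}\rho_{e/p}(H)^n\longrightarrow 0,$$
which gives $\lim_{n\to\infty}\rho_e^*(H^n)=1$. If $H$ is trivial then $e=1$ and the claim is immediate, so we may assume $e>1$.

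No step is a real obstacle. The only point needing care is that $S_p\neq H$, which follows because the exponent is the lcm of the element orders, so the full $p$-part $p^a$ of $e$ must appear in the order of some element.
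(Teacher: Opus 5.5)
Your proof is correct and is essentially the paper's argument. The paper sets $T_j=\{h\in H : p_j^{a_j}\mid o(h)\}$ and bounds the proportion of elements of order $\neq e$ by $\sum_j(1-|T_j|/|H|)^n$. Since every element order divides $e$, your $S_p$ is exactly $H\setminus T_p$, so your bound $\sum_{p\mid e}\rho_{e/p}(H)^n$ is the same union bound.
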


\begin{proof}
Let $n\in \mathbb{Z}_{>0}$ and $G=H^n$. We may write $e = p_1^{a_1}p_2^{a_2}\cdots p_r^{a_r}$ for some pairwise distinct primes $p_1,\dots,p_r$ and positive integers $a_1,\dots,a_r$. Let $$T_j := \{h\in H \mid p_j^{a_j}\text{ divides } o(h)\}.$$ Note that $T_j$ is nonempty for each $j$ by the definition of exponent. 

Suppose that some element $g=(g_1,\dots,g_n)\in G$ has $o(g)\neq e$. Note that $o(g) = \lcm_{1\leq i\leq n}(o(g_i))$, so there exists some $j$ such that no $g_i$ is in $T_j$. Therefore, $$\left|\{g\in G\mid o(g)\neq e\}\right| \leq \sum_{j=1}^{r}(|G|-|T_j|)^n $$ so $$\frac{\left|\{g\in G\mid o(g)= e\}\right|}{|G|} = 1- \frac{\left|\{g\in G\mid o(g)\neq e\}\right|}{|G|} \geq 1 - \sum_{j=1}^{r}\left(1-\frac{|T_j|}{|H|}\right)^n.$$ The right-hand side converges to $1$ as $n\rightarrow \infty$.
\end{proof}

In particular, if $k$ is the exponent of a finite nonabelian simple group, then \autoref{direct_prod} provides a characteristically simple counterexample to \autoref{conjecture}. We also have the following construction which can be applied to more general choices of $k$:

\begin{prop}\label{construction2}
Let $\epsilon\in (0,1)$ and $k\in \mathbb{Z}_{>1}$. Suppose that there exists a finite non-soluble group $H$ such that $\rho_k(H)>\epsilon$. Then there exists a finite non-soluble group $G$ such that $\rho_k^*(G)>\epsilon$.
\end{prop}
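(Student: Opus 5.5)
Take $G = H \times C_k^n$ with $n$ large. An element $(h,c)$ has order $\lcm(o(h),o(c))$. Whenever $o(h)$ divides $k$ and $o(c)=k$, this least common multiple is exactly $k$. Hence
$$\rho_k^*(G) \ \geq\ \rho_k(H)\cdot \rho_k^*(C_k^n).$$
The group $G$ is non-soluble because it contains $H$ as a subgroup (equivalently, as a quotient).

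Next, $\rho_k^*(C_k^n)\to 1$ as $n\to\infty$. This follows from \autoref{direct_prod} applied to $C_k$, whose exponent is $k$. It can also be seen directly: an element of $C_k^n$ fails to have order $k$ only if, for some prime power $p^a$ exactly dividing $k$, no coordinate has order divisible by $p^a$. For each prime this happens with probability $(1-1/p)^n$, so the proportion of such elements tends to $0$.

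Since $\rho_k(H) > \epsilon$ strictly, set $\delta = \rho_k(H) - \epsilon > 0$. Choose $n$ with $\rho_k^*(C_k^n) > \epsilon/\rho_k(H)$; this is possible because the right side is less than $1$. Then $\rho_k^*(G) > \epsilon$.

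There is no real obstacle. The only point needing care is that the order of $(h,c)$ is exactly $k$ rather than just a divisor of $k$, and this is guaranteed by the $C_k^n$ component having order exactly $k$.
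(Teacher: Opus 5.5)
Your proof is correct and is essentially the paper's argument: take $G=H\times (C_k)^n$, use $\rho_k^*(G)\geq \rho_k(H)\,\rho_k^*((C_k)^n)$, and invoke \autoref{direct_prod} to choose $n$ so that $\rho_k^*((C_k)^n)>\epsilon/\rho_k(H)$. One small slip in your side remark: the proportion of elements of $(C_k)^n$ with no coordinate of order divisible by $p^a$ is $p^{-n}$, not $(1-1/p)^n$. The latter is still a valid upper bound since $1/p\leq 1-1/p$, so nothing breaks.
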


\begin{proof}
By \autoref{direct_prod}, we can choose $G=H\times (C_k)^n$ with large $n$ so that $\rho_k^*((C_k)^n)>\epsilon/\rho_k(H)$. Note that if $(h,c)\in H\times (C_k)^n=G$ satisfies $o(h)\mid k$ and $o(c)=k$, then $o((h,c))=k$. Therefore $\rho_k^*(G)\geq \rho_k(H)\cdot \rho_k^*((C_k)^n)>\rho_k(H) \cdot (\epsilon/\rho_k(H)) = \epsilon$. 
\end{proof}

For example, $\rho_6(A_5)=36/60=3/5$, $\rho_{10}(A_5) = 40/60=2/3$, and $\rho_{15}(A_5)=45/60=3/4$, so for $k=6,10$ and $15$, we can find finite non-soluble groups whose $\rho^*_k$ are arbitrarily close to these numbers. In particular, \autoref{conjecture} fails for these $k$. Note that there is no nonabelian characteristically simple counterexample as in \autoref{direct_prod} for these $k$, since these are never the exponent of a non-soluble group by Burnside's $pq$-theorem.

Although \autoref{conjecture} fails in general, it might still hold if we fix $k$. As we saw earlier, it holds for $k=1$ and $2$. It might be interesting to reformulate \autoref{conjecture} as the following problem:
\begin{qst}\label{question}
(1) Can we compute $\epsilon_k$ for each $k\in \mathbb{Z}_{>0}$?\\
(2) For which $k$ is $\epsilon_k> 1/2$? \\
(3) For which $k$ does $\epsilon_k^*$ exist?
\end{qst}

The above examples can be summarized as the following:
\begin{prop}\label{examples_summary}
{\upshape(1)} $k$ is the exponent of some finite non-soluble group if and only if $\epsilon^*_k=1$.\\{\upshape(2)} For all $k$, $\sup\{\rho_k^*(G)\mid G \text{ finite non-soluble}\} = \epsilon_k$.\\{\upshape(3)} If $k$ divides $l$, then $\epsilon_k\leq \epsilon_l$.\\{\upshape(4)} $\epsilon_1=\epsilon_1^*=1/60$.\\{\upshape(5)} $\epsilon_2=\epsilon_2^*=4/15.$
\end{prop}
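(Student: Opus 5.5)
The five parts are essentially independent, and I will take them in order. For (1), suppose first that $k$ is the exponent of a finite non-soluble group $H$. Then every element of $H$ has order dividing $k$, so $\rho_k(H)=1$ and $\epsilon_k^*=1$. (\autoref{direct_prod} gives the stronger statement for $\rho_k^*$, but it is not needed here.) Conversely, suppose $\epsilon_k^*=1$, so some finite non-soluble $G$ has $\rho_k(G)=1$. Then $\exp(G)$ divides $k$. The group $G\times C_k$ is still non-soluble, and its exponent is $\lcm(\exp(G),k)=k$, so $k$ is the exponent of a finite non-soluble group.

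For (2), the inequality $\rho_k^*(G)<\rho_k(G)$ noted in the definition gives $\sup\rho_k^*\le\epsilon_k$. For the reverse inequality, take any $\epsilon<\epsilon_k$ and choose a non-soluble $H$ with $\rho_k(H)>\epsilon$. \autoref{construction2} then produces a non-soluble $G$ with $\rho_k^*(G)>\epsilon$. Letting $\epsilon\to\epsilon_k$ gives equality.

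For (3), if $k\mid l$ then every element of order dividing $k$ also has order dividing $l$. Hence $\rho_k(G)\le\rho_l(G)$ for every $G$, and taking suprema over non-soluble $G$ gives $\epsilon_k\le\epsilon_l$.

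For (4), $\rho_1(G)=1/|G|$. Every group of order less than $60$ is soluble, while $A_5$ has order $60$. So the maximum is attained at $A_5$ and equals $1/60$.

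For (5), the lower bound comes from $A_5$: it has $15$ involutions, and together with the identity this gives $\rho_2(A_5)=16/60=4/15$. The upper bound is the main obstacle, since it needs the deep input. I will invoke the classification of Potter \cite{Potter} and Berkovich \cite{Berkovich} of finite groups in which more than $4/15$ of the elements satisfy $x^2=1$, and check from that list that all such groups are soluble. The point requiring care is the normalisation: the cited results may count involutions with or without the identity. If they count involutions only (threshold $i(G)/|G|>4/15$), I need an extra argument for non-soluble $G$ with $4/15-1/|G|<i(G)/|G|\le 4/15$.

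As a fallback I would argue via Frobenius--Schur indicators. The number of solutions of $x^2=1$ equals $\sum_\chi\nu(\chi)\chi(1)\le\sum_\chi\chi(1)$. I would then bound $\rho_2(G)$ by passing to a non-soluble section: using the reduction to monolithic groups promised at the start of this section, together with the inequality $\rho_2(G)\le\rho_2(G/N)$, which holds since a coset $xN$ contains elements of order dividing $2$ only if $xN$ does. This reduces the problem to almost simple groups, where the ratio of elements of order dividing $2$ is at most $4/15$, with equality only for $A_5$. With the bound $\rho_2(G)\le 4/15$ for all non-soluble $G$ and equality at $A_5$, the supremum is a maximum, so $\epsilon_2=\epsilon_2^*=4/15$.
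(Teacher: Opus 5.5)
Parts (1)--(4) are correct and follow the paper. In (1), your use of $G\times C_k$ to pass from ``$\exp(G)$ divides $k$'' to ``$k$ is the exponent of a non-soluble group'' spells out a step the paper leaves implicit. Citing Potter/Berkovich for (5) is also the paper's route. The gap is that you leave the normalisation question open, and the fallback you offer to settle it does not work.

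First, \autoref{monolithic_reduction} reduces only to monolithic groups with socle $T^n$, not to almost simple groups. The paper's $k=24$ example shows that such groups can beat every almost simple group with the same socle, so this step needs a real argument. To dispose of $n\ge 2$ you would need the coset analysis of \autoref{Coset_bound}, fed by a bound $\rho_2(xT)\le 4/15$ for every coset of $T$ in $\Aut(T)$. Second, the claim that almost simple groups satisfy $\rho_2\le 4/15$ (with equality only at $A_5$) is simply asserted. Your inequality $|\{x: x^2=1\}|\le\sum_\chi\chi(1)$ is never connected to $4/15$. That almost simple bound is exactly what \autoref{k=2_Coset}(2) extracts from Potter's theorem, so the fallback is not independent of the deep input it was meant to replace.

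The missing idea is elementary. The paper uses Berkovich's theorem with threshold $1/4$ on involutions, identity excluded: the only non-soluble $G$ with $\rho_2^*(G)\ge 1/4$ are $A_5\times E$ with $E$ trivial or elementary abelian $2$-group. These satisfy $\rho_2(A_5\times E)=16|E|/(60|E|)=4/15$. Every other non-soluble $G$ has $|G|\ge 60$, so
\[
\rho_2(G)=\rho_2^*(G)+\frac{1}{|G|}<\frac14+\frac1{60}=\frac{4}{15}.
\]
The gap $4/15-1/4=1/60$ is exactly what absorbs the $1/|G|$ discrepancy you were worried about.

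Alternatively, you can use Potter's theorem in the form quoted in \autoref{k=2_Coset}(1), applied to the identity automorphism. The identity automorphism inverts precisely the solutions of $x^2=1$, identity included. So that form gives $\rho_2(G)\le 4/15$ for every non-soluble $G$ directly, with no normalisation issue, and $A_5$ attains the bound.
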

Note that \autoref{conjecture} for a fixed $k$ is equivalent to \autoref{question}(2) by \autoref{examples_summary}(2) and the fact $\rho_k^*<\rho_k$.
\begin{proof}
(1) For a finite group $G$, $\rho_k(G)=1$ if and only if $k$ is divisible by the exponent of $G$.

(2) Since $\rho_k^*(G)<\rho_k(G)$ for all $k>1$, we must have $\sup\{\rho_k^*(G)\mid G \text{ finite non-soluble}\} \leq \epsilon_k$. On the other hand, by \autoref{construction2}, for each finite non-soluble $G$, there exists a finite non-soluble $H$ such that $\rho_k^*(H)$ is arbitrarily close to $\rho_k(G)$. Therefore $\sup\{\rho_k^*(G)\mid G \text{ finite non-soluble}\} \geq \epsilon_k$.

(3) is immediate from the definition.
 
(4) follows immediately from the fact that the alternating group $A_5$ is a non-soluble group of the smallest order.

(5) follows from a result of Berkovich \cite[Theorem 3]{Berkovich} which proves that the only finite non-soluble groups $G$ with $\rho_2^*(G)\geq 1/4$ are $A_5\times E$ where $E$ is either trivial or an elementary abelian $2$-group.
\end{proof}

We list more observations, including a reduction to finite non-soluble monolithic groups with soluble quotients, i.e. finite groups $G$ with a unique minimal normal subgroup $N$, with an additional condition that $G/N$ is soluble.
\begin{lem}\label{quo}
Let $k\in \mathbb{Z}_{>0}$ and $N\unlhd G$. Then $\rho_k(G)\leq \rho_k(G/N)$. 
\end{lem}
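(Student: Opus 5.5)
The plan is to count elements of order dividing $k$ coset by coset. Write $\bar G = G/N$ and let $\pi\colon G\to \bar G$ be the natural projection. If $g\in G$ has $o(g)\mid k$, then $g^k=1$, so $(gN)^k=N$ and $o(gN)\mid k$. Thus every element of $G$ of order dividing $k$ lies in a coset $gN$ whose image has order dividing $k$ in $\bar G$. This gives
$$\left|\{g\in G : o(g)\mid k\}\right| \le \sum_{\substack{gN\in \bar G\\ o(gN)\mid k}} \left|\{x\in gN : o(x)\mid k\}\right| \le \left|\{gN\in\bar G : o(gN)\mid k\}\right|\cdot |N|.$$
For the last inequality, each summand is bounded trivially by $|gN|=|N|$.

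Dividing by $|G| = |\bar G|\,|N|$ gives
$$\rho_k(G)\le \frac{\left|\{gN\in\bar G : o(gN)\mid k\}\right|}{|\bar G|}=\rho_k(G/N).$$

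There is no real obstacle here. The only point needing care is that the order of the image divides the order of the element, which is immediate because $\pi$ is a homomorphism. The case $k=1$ is covered by the same argument, since then the bound reads $1/|G|\le 1/|G/N|$.
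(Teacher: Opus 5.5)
Your proof is correct and is essentially the paper's argument: every element of order dividing $k$ lies in a coset $gN$ with $(gN)^k=N$, and bounding each such coset by $|N|$ gives $\rho_k(G)|G|\le |N|\,\rho_k(G/N)\,|G/N|$. The paper phrases the first step as a contrapositive ($g^kN\neq N$ implies $g^k\neq 1$), but the counting is the same.
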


\begin{proof}
For $g\in G$, if $(gN)^k = g^kN \neq N$, then $g^k\neq 1$, so the number of elements of order dividing $k$ in $G$ is at most the number of elements of the union of cosets $\bigcup(gN\mid (gN)^k=1)$. Since each coset has size $|N|$ and there are $\rho_k(G/N)|G/N|$ such cosets, we get $\rho_k(G)|G|\leq |N||\rho_k(G/N)||G/N|$.
\end{proof}

\begin{crl}\label{monolithic_reduction}
Let $k\in \mathbb{Z}_{>0}$ and $\epsilon\in (0,1)$. Suppose that there is a finite non-soluble group $G$ with $\rho_k(G)>\epsilon$. Then there exist (possibly trivial) normal subgroups $N\unlhd S\unlhd G$ such that $\rho_k(G/N)\geq \rho_k(G)>\epsilon$ and $S/N$ is the unique minimal normal subgroup of $G/N$, and $G/S$ is soluble with $\rho_k(G/S)>\epsilon$. In particular, to find $\epsilon_k$ we only need to check $\rho_k$ of finite non-soluble monolithic groups whose quotient by the socle is soluble. 
\end{crl}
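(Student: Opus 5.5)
The idea is to choose $N$ as large as possible among normal subgroups whose quotient stays non-soluble, and then let \autoref{quo} carry the ratio through every quotient. Let $G$ be finite non-soluble with $\rho_k(G)>\epsilon$. Among all normal subgroups $N\unlhd G$ with $G/N$ non-soluble, pick one that is maximal with respect to inclusion. Such normal subgroups exist, since $N=1$ qualifies, and $G$ is finite. By \autoref{quo}, $\rho_k(G/N)\geq\rho_k(G)>\epsilon$.

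Next I show that $\bar G=G/N$ is monolithic with soluble quotient by its socle. By the maximality of $N$, every nontrivial normal subgroup $M/N$ of $\bar G$ gives a soluble quotient $G/M\cong \bar G/(M/N)$. Suppose $\bar G$ had two distinct minimal normal subgroups $A,B$. Then $A\cap B=1$, so $\bar G$ embeds in $\bar G/A\times\bar G/B$. Both factors are soluble, so $\bar G$ would be soluble, a contradiction. Also $\bar G\neq 1$, because it is non-soluble. Hence $\bar G$ has a unique minimal normal subgroup, which we write as $S/N$ with $N\unlhd S\unlhd G$. Since $S/N$ is nontrivial, $G/S\cong\bar G/(S/N)$ is soluble.

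Finally, applying \autoref{quo} to $S\unlhd G$ gives $\rho_k(G/S)\geq\rho_k(G)>\epsilon$, which is the required bound. For the "in particular" clause, note that $\rho_k(G)\leq\rho_k(G/N)$ and $G/N$ is itself a non-soluble monolithic group with soluble quotient by its socle. So the supremum $\epsilon_k$ may be computed over this class alone. The argument has no real obstacle; the only step needing care is the embedding argument that rules out two minimal normal subgroups. Strictly, the lemma's hypothesis $\epsilon\in(0,1)$ and the condition $\rho_k(G/S)>\epsilon$ play no further role beyond this inequality.
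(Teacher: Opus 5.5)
Your proof is correct and takes the same route as the paper. The paper cites the existence of a non-soluble monolithic quotient $G/N$ with $G/S$ soluble as an ``elementary fact'' and applies Lemma~\ref{quo} to $G/N$ and to $G/S$. Your choice of a maximal $N$ with $G/N$ non-soluble, together with the embedding $\bar G\hookrightarrow \bar G/A\times \bar G/B$ that rules out two minimal normal subgroups, is precisely the standard justification of that fact.
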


\begin{proof}
The existence of a non-soluble monolithic quotient of a finite non-soluble group $G$, whose quotient by the socle is soluble, is an elementary fact, and we can apply \autoref{quo} to this quotient, and also to its quotient by the socle.
\end{proof}

The following example and \autoref{M10} suggest that reduction to simple groups or even to almost simple groups seems to be impossible, at least in an intuitive way, so we need to look at not only simple groups but also groups which are monolithic but not (almost) simple.

\begin{exm}
There exists an index $2$ subgroup $G$ of $A_5\wr C_2$ which has $\rho_{24}(G)=21/25$. This is strictly larger than $\rho_{24}(A_5)=3/5$ and $\rho_{24}(S_5)=4/5$, which are the only almost simple groups with socle $A_5$. Therefore there exists a monolithic group whose socle is $S^n$ for some nonabelian simple $S$ and $n\geq 1$, and whose $\rho_k$ for some $k$ is larger than $\rho_k$ of any almost simple group whose socle is $S$.
\end{exm}

\section{Lower bounds}
As we saw earlier, the conjecture is not true in general for any ratio. In this section, we provide some examples that establish some lower bounds for infinite families of numbers $k$, including some very general ones. 

First we get a very weak, obvious general lower bound for $\epsilon_k$.
\begin{prop}\label{Alt}
For all $k\in \mathbb{Z}_{\geq 5}$, $\epsilon_k>  \rho^*_k(S_k) \geq  1/k$. If $k$ is odd, then $\epsilon_k\geq  \rho_k(A_k) \geq 2/k+2/k!$. Consequently, $\epsilon_k\geq 2/p+2/p!$ for any prime $p\geq 5$ dividing $k$. If $k$ is an odd prime, then this is the largest $\rho_k$ we can get from symmetric and alternating groups.
\end{prop}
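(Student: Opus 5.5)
The argument splits into four claims: two easy bounds, their consequence for prime divisors, and the extremality statement for odd primes, which is the only step with real work.

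\emph{First bound.} Take $S_k$ with $k\ge 5$; it is non-soluble. The $k$-cycles have order exactly $k$, and there are $(k-1)!$ of them, so $\rho_k^*(S_k)\ge (k-1)!/k! = 1/k$. For the strict inequality $\epsilon_k>\rho_k^*(S_k)$, use $\rho_k(G)>\rho_k^*(G)$ (the identity is counted only in the first), which gives $\epsilon_k\ge\rho_k(S_k)>\rho_k^*(S_k)$.

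\emph{Second bound.} For odd $k$, a $k$-cycle is an even permutation, so all $(k-1)!$ of them lie in $A_k$. Together with the identity this gives
\[
\rho_k(A_k)\ge \frac{(k-1)!+1}{k!/2}=\frac{2}{k}+\frac{2}{k!}.
\]

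\emph{Consequence for primes.} If a prime $p\ge5$ divides $k$, then $p$ is odd, so the second bound gives $\epsilon_p\ge 2/p+2/p!$. Then \autoref{examples_summary}(3), i.e.\ $\epsilon_p\le\epsilon_k$, transfers this to $k$.

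\emph{Extremality for odd primes $k=p$.} We must show that among $S_n$ and $A_n$ (with $n\ge5$), $\rho_p$ is at most $2/p+2/p!$.
\begin{itemize}
\item For $n<p$, the only element of order dividing $p$ is the identity, so $\rho_p=1/|G|\le 1/60$, which is small.
\item For $n\ge p$, the elements of order dividing $p$ are products of $j$ disjoint $p$-cycles with $0\le j\le\lfloor n/p\rfloor$. These are all even, so their total count $N_n$ is the same in $A_n$ and $S_n$. Hence $A_n$ gives the larger ratio, and it suffices to bound $N_n/(n!/2)$. We have
\[
N_n=\sum_{j}\frac{n!}{p^j\, j!\,(n-pj)!},\qquad \frac{2N_n}{n!}=2\sum_j \frac{1}{p^j\, j!\,(n-pj)!}.
\]
\item The main obstacle is showing this is maximized at $n=p$. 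At $n=p$ the value is $2(1/p!+1/p)$, as required.
\item For $n>p$, compare term by term with the case $n=p$. The $j=0$ term is $2/n!\le 2/p!$. The $j=1$ term is $2/(p\,(n-p)!)$, which is at most $2/p$ and at most $1/p$ once $n\ge p+2$.
\item The terms with $j\ge2$ are at most $2/(p^2\cdot 2)$ and then decay geometrically. Summed, they should be dominated by the deficit already gained in the $j=1$ term (roughly $1/p$) together with the gain $2/p!-2/n!$ in the $j=0$ term. For $n=p+1$ we have $j\le1$ since $p+1<2p$, so the sum is exactly $2/(p+1)!+2/p<2/p+2/p!$.
\item For $n\ge p+2$, I would bound the whole sum by $2/n!+1/p+\sum_{j\ge2}2/(p^j j!)$. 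This is less than $1/p+2/n!+e^{1/p}\cdot 2/(2p^2)$, which is below $2/p$ for $p\ge5$.
\end{itemize}
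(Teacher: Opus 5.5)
Your first three assertions are handled correctly and exactly as in the paper: count the $(k-1)!$ $k$-cycles, note that they are even when $k$ is odd, add the identity, and pass to multiples via \autoref{examples_summary}(3).

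\textbf{The gap is in your treatment of $5\le n<p$.} For such $n$, only the identity has order dividing $p$, so $\rho_p(A_n)=2/n!$. This is maximised at $\rho_p(A_5)=1/60$. But $1/60\le 2/p+2/p!$ only when $p\le 113$. For every prime $p\ge 127$ you get $\rho_p(A_5)=1/60>2/p+2/p!=\rho_p(A_p)$. So the last sentence of the statement is false as literally written for $\rho_p$, and calling $1/60$ ``small'' is precisely where your proof breaks. What you actually prove, and what is true, is the claim restricted to $n\ge p$. Equivalently, it is the corresponding claim for $\rho_p^*$, which vanishes for $n<p$. The paper's argument also controls only $\rho_p^*$ and then adds the identity term at $n=p$, so it tacitly makes the same restriction. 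You should state it explicitly, either as $n\ge p$ or as $p\le 113$.

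\textbf{For $n\ge p$ your argument is correct but takes a different route from the paper.} The paper proves two monotonicity facts for $\rho_p^*(S_n)=\sum_{m\ge 1}1/(p^m m!(n-mp)!)$:
\begin{itemize}
\item if $p\nmid n$, then $\rho_p^*(S_{n-1})\ge\rho_p^*(S_n)$ (same terms, smaller factorials);
\item if $p\mid n$, then $\rho_p^*(S_n)>\rho_p^*(S_{n+p})$ (a shifted termwise comparison).
\end{itemize}
Hence $\rho_p^*$ peaks at $n=p$, tied with $n=p+1$, and the identity term breaks the tie in favour of $n=p$.

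You instead treat $n=p+1$ exactly and, for $n\ge p+2$, bound the sum directly. The $j=1$ term drops to at most $1/p$ and the tail $\sum_{j\ge2}$ is at most $e^{1/p}/p^2$, which gives $\rho_p(A_n)<2/p$ outright.

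Your approach is shorter, self-contained, and gives an explicit margin below $2/p$. The paper's approach gives more structural information, namely how $\rho_p^*(S_n)$ moves as $n$ varies, rather than only where the maximum sits.
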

\begin{proof}
There are $(k-1)!$ $k$-cycles in $S_k$. They are contained in $A_k$ if and only if $k$ is odd. The second part follows from the first part and \autoref{examples_summary}(3). 

For the last part, let $p$ be a prime and $n\geq 5$. Then $\rho_p^*(S_n)$ is the number of elements of cycle types $(p), (p^2)=(p,p), \dots, (p^a)=(p,p,\dots,p)$ where $a$ is the largest number such that $ap\leq n$. Therefore we get $$\rho_p^*(S_n)=\sum_{m=1}^{a} \frac{1}{p^m(m!)(n-mp)!}.$$ Also, $$\rho_p^*(A_n)=2\sum_{m=1}^{a} \frac{1}{p^m(m!)(n-mp)!}\text{ if }p\neq 2\text{, and }\rho_2^*(A_n)=2\sum_{\substack{2\leq m\leq a \\m\text{ even}}} \frac{1}{2^m(m!)(n-2m)!}.$$  From this formula we can see that if $p\nmid n$, then $\rho_p^*(S_{n-1})\geq \rho_p^*(S_n)$, and if $p\mid n$, then $\rho_p^*(S_n)> \rho_p^*(S_{n+p})$. Therefore, the maximum value of $\rho_p$ we can get among the symmetric and alternating groups is $\rho_p(A_p) =\rho_p^*(A_p)+\rho_1^*(A_p)=2/p+2/p!$ for any prime $p\geq 5$.
\end{proof}

\begin{prop}\label{Alt_two_primes} Let $p> q$ be primes. The $n$ maximizing $\rho_{pq}^*(S_n)$ is $pq$, and $$\rho_{pq}^*(A_n)/2\leq \rho_{pq}^*(S_{pq}) =  \frac{1}{pq}+\sum_{x=1}^{q-1}\sum_{\substack{1\leq y\leq p(q-x)/q\\y\in \mathbb{Z}}} \frac{1}{p^x(x!)q^y(y!)(pq-xp-y q)!} < \frac{1}{pq} + \frac{q-1}{p^qq!}.   $$
\end{prop}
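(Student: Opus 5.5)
Following the proof of \autoref{Alt}, I will count permutations of order exactly $pq$ in $S_n$ by cycle type. The order of a permutation is the lcm of its cycle lengths, and the only cycle lengths dividing $pq$ are $1,p,q,pq$. So an element of order $pq$ either contains at least one $pq$-cycle, or has $x\ge1$ cycles of length $p$ and $y\ge1$ cycles of length $q$ together with fixed points. The number of permutations with $c_i$ cycles of length $i$ is $n!/\prod_i i^{c_i}c_i!$. This gives an explicit formula for $\rho_{pq}^*(S_n)$ as a sum of terms $1/\prod_i i^{c_i}c_i!$ over admissible cycle types, all fitting in $n$ points.

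\emph{Step 1: the maximizing $n$.} I would argue as in \autoref{Alt}, comparing $\rho^*_{pq}(S_n)$ with $\rho^*_{pq}(S_{n-1})$ term by term. A cycle type on $n$ points with $f\ge1$ fixed points maps to the same cycle type on $n-1$ points, and its term grows by the factor $f$ (replacing $1/f!$ by $1/(f-1)!$). Hence $\rho^*_{pq}(S_n)$ is weakly dominated by $\rho^*_{pq}(S_{n-1})$ unless there are cycle types with no fixed points. For $n<pq$ the $pq$-cycle is unavailable. I then need to show that $S_{pq}$ beats every smaller $n$ and every larger $n$:
\begin{itemize}
\item For $n>pq$, I compare with $S_{pq}$. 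Fixed-point-free types on $n$ points have small terms of order at most $1/(p^x x! q^y y!)$ with $xp+yq=n$, and these are controlled by the fixed-point-decreasing injection together with the same comparison used for $\rho_p^*$ in \autoref{Alt}.
\item For $n<pq$, I show that the single new term $1/(pq)$ from the $pq$-cycle exceeds the total contribution of the fixed-point-free $(p^x,q^y)$ types lost when passing to $n=pq$.
\end{itemize}
This comparison is the main obstacle, since fixed-point-free solutions of $xp+yq=n$ exist for many $n$. I would bound them crudely by $1/(p^x q^y)\le 1/(pq)^{2}$-type estimates.

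\emph{Step 2: the formula and the bound.} At $n=pq$ a $pq$-cycle uses all the points, contributing exactly $1/(pq)$. The remaining types have $xp+yq\le pq$ with $x,y\ge1$, so $x\le q-1$ and $1\le y\le p(q-x)/q$, with $pq-xp-yq$ fixed points. This is precisely the stated double sum. For the inequality, bound each inner sum over $y$ by the single largest term times a geometric factor. The dominant contribution should be $x=q-1$, where $y$ is small and there are few fixed points. Then bound every term by $1/(p^q q!)$ and count at most $q-1$ values of $x$.

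I expect this last bound to need care. The terms with small $x$ have a large factorial $(pq-xp-yq)!$ in the denominator, but that factorial does not obviously beat $p^{q-x}$. If so, I would use $(p(q-x)-yq)!\ge \big(p(q-x)-yq\big)!$ together with $y\le p(q-x)/q$ to show each $x$-summand is at most $1/(p^q q!)$.

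\emph{Step 3: the alternating group.} For $A_n$, the inequality $\rho^*_{pq}(A_n)\le 2\rho^*_{pq}(S_n)\le 2\rho^*_{pq}(S_{pq})$ follows immediately from $|A_n|=|S_n|/2$ and Step 1.
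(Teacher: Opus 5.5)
\textbf{There is a genuine gap: both steps try to prove claims that are false as stated.} Your derivation of the equality (the cycle-type double sum at $n=pq$) is fine. The two places you flagged as delicate are exactly where things break, and they cannot be repaired.

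\emph{Step 2 (the bound).} Already for $(p,q)=(3,2)$ the formula gives $\rho_6^*(S_6)=\frac16+\frac16=\frac13$, whereas $\frac1{pq}+\frac{q-1}{p^qq!}=\frac29$. More structurally, consider the summand with $x=q-1$, $y=1$, which has $p-q$ fixed points. It equals $\frac{p}{(p-q)!}\cdot\frac1{p^qq!}$. So your planned estimate ``each $x$-summand is at most $1/(p^qq!)$'' fails whenever $(p-q)!<p$, for instance for every twin pair. When $p=q+2$, the balanced summands with $x\approx y\approx q/2$ even exceed $\frac{q-1}{p^qq!}$ by a factor growing exponentially in $q$. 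Concretely, for $(7,5)$ the two summands $(x,y)=(4,1)$ and $(2,4)$ already give $\frac1{576240}+\frac1{1470000}>\frac4{7^5\cdot 5!}$. Also, the step $(p(q-x)-yq)!\ge(p(q-x)-yq)!$ is a tautology and proves nothing.

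\emph{Step 1 (the maximising $n$).} Your fixed-point-removal injection only handles types that have fixed points. The fixed-point-free types for $n>pq$ are not controlled by anything in your outline. For $n=pq+m$, the types containing a $pq$-cycle contribute exactly $\frac1{pq}\rho_{pq}(S_m)$. This does not drop at all when $m=1$, or when $m=2$ and $q=2$. So there is no loss of size $1/(pq)$ to absorb the new terms, and they can win.
\begin{itemize}
\item For $(p,q)=(11,2)$: the types $(22,1,1)$, $(22,2)$ and $(11,11,2)$ give $\rho_{22}^*(S_{24})\ge\frac1{22}+\frac1{484}$. On the other hand, $\rho_{22}^*(S_{22})=\frac1{22}+\frac{35695}{11\cdot 11!}<\frac1{22}+10^{-4}$.
\item For odd twin primes the competitor is $n=pq+1$. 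Take $(19,17)$ and let $T$ be the largest non-$pq$-cycle summand of $S_{323}$, of type $(19^8,17^{10},1)$. In $S_{324}$ this type only halves, to $T/2$. The new fixed-point-free type $(19^9,17^9)$ contributes $\frac{170}{171}T$. All other summands of $S_{323}$ total less than $0.2\,T$. Hence $\rho^*_{323}(S_{324})>\rho^*_{323}(S_{323})$.
\end{itemize}

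\emph{Step 3 (the alternating group).} Elements of odd order are even permutations, so $\rho^*_{pq}(A_n)=2\rho^*_{pq}(S_n)$ for odd $p,q$. The $(19,17)$ example therefore breaks the inequality in Step 3 as well.

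The paper's own proof only writes down the general cycle-type sum and then asserts that the maximum is at $n=pq$. It does not contain the comparison you are missing. The proposition itself has to be corrected before any version of your plan can work.
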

\begin{proof}
Let $a$ be the largest integer such that $ap\leq n$, and for $0\leq x\leq a$, let $b_x$ be the largest integer such that $b_xp\leq n-xp$. Also let $c_{x,y}$ be the largest integer such that $c_{x,y}pq\leq n-xp-y q$. As in the proof of \autoref{Alt}, we get $$\rho_{pq}^*(S_n) = \sum_{x=0}^{a}\sum_{y=0}^{b_x} \sum_{z=\max(1-xy, 0)}^{c_{x,y}}\frac{1}{p^x(x!)q^y(y!)(pq)^z(z!)(n-xp-y q-zpq)!}.$$
Again, the maximum value is achieved at $n=pq$.
\end{proof}

Usually, we have a much better lower bound.

\begin{prop}\label{PSL}
Let $q$ be a power of an odd prime $p$. \\
{\upshape(1)} The exponent of $\PSL_2(q)$ is $p(q+1)(q-1)/4$.\\
{\upshape(2)}
\begin{align*}
\rho_{(q-1)/2}(\PSL_2(q)) = \frac{q(q+1)(q-3)/2 + 2}{(q-1)q(q+1)},\ \rho_{(q+1)/2}(\PSL_2(q)) = \frac{q(q-1)^2/2 +2}{(q-1)q(q+1)}.
\end{align*}
In particular, for any small number $\delta>0$, there exists $M\in \mathbb{Z}_{>0}$ such that for all prime powers $q>M$, $\epsilon_{(q-1)/2}>1/2-\delta$ and $\epsilon_{(q+1)/2}>1/2-\delta$.\\
{\upshape(3)} If $q\equiv 1$ mod $4$, then \begin{align*}
\rho_{q+1}(\PSL_2(q)) = \frac{q^3+3q+4}{2(q-1)q(q+1)} > 1/2.
\end{align*} In particular, $\epsilon_{q+1}>1/2$, so \autoref{conjecture} fails when $k-1$ is a prime power which is $\equiv 1$ mod $4$. \\{\upshape(4)} If $q\equiv 3$ mod $4$, then \begin{align*}
\rho_{q-1}(\PSL_2(q)) = \frac{q^3-5q+4}{2(q-1)q(q+1)} < 1/2.
\end{align*}\\{\upshape(5)} 
\begin{align*}
&\rho_{p(q-1)/2}(\PSL_2(q)) = \frac{ q(q^2  +2q-3) }{2(q-1)q(q+1)} > 1/2,\ \rho_{p(q+1)/2}(\PSL_2(q)) = \frac{q(q+1)^2}{2(q-1)q(q+1)}>1/2,\\&\rho_{(q-1)(q+1)/4}(\PSL_2(q)) = \frac{q-2}{q}>1/2 \text{ for }q\geq 5.
\end{align*}
In particular, $\epsilon_{p(q-1)/2}, \epsilon_{p(q+1)/2}$ and $\epsilon_{(q-1)(q+1)/4}$ are larger than $1/2$, so \autoref{conjecture} fails when $k$ is of these forms.
\end{prop}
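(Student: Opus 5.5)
The plan is to use the standard description of the element orders of $G=\PSL_2(q)$, $q$ odd, $|G|=q(q-1)(q+1)/2$, and then count, for each $k$, the elements whose order divides $k$. Recall the partition of $G$. There is the identity. There are $q^2-1$ nontrivial unipotent elements, each of order $p$. The remaining non-identity elements are semisimple. They lie in split tori (cyclic of order $(q-1)/2$, with $q(q+1)/2$ conjugates) or in non-split tori (cyclic of order $(q+1)/2$, with $q(q-1)/2$ conjugates). Distinct conjugate tori intersect trivially. Consequently the non-identity split semisimple elements number $\frac{q(q+1)}{2}\cdot\frac{q-3}{2}$, and the non-identity non-split semisimple elements number $\frac{q(q-1)}{2}\cdot\frac{q-1}{2}$. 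One checks the total: $1+(q^2-1)+q(q+1)(q-3)/4+q(q-1)^2/4$ equals $q(q^2-1)/2$. The cyclic subgroups of orders $p$, $(q-1)/2$, $(q+1)/2$ (the latter two coprime to each other up to the common factor structure) are maximal among abelian pieces, so the order of every element divides one of these numbers.

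For (1), the exponent is $\lcm(p,(q-1)/2,(q+1)/2)$. Since $\gcd((q-1)/2,(q+1)/2)=1$, this equals $p(q-1)(q+1)/4$.

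For (2) through (5) we count elements of order dividing $k$. An element $g\neq 1$ has order dividing $k$ exactly when the relevant class divides $k$.
\begin{itemize}
\item A unipotent element qualifies iff $p\mid k$.
\item An element in a split torus qualifies iff its order divides $\gcd(k,(q-1)/2)$.
\item An element in a non-split torus qualifies iff its order divides $\gcd(k,(q+1)/2)$.
\end{itemize}
Each cyclic torus of order $m$ contains exactly $\gcd(k,m)-1$ non-identity elements of order dividing $k$.

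So for $k=(q-1)/2$ we get $1+\frac{q(q+1)}{2}\cdot\frac{q-3}{2}$ elements. Note that $\gcd((q-1)/2,(q+1)/2)=1$ and $p\nmid k$, so no other classes contribute. Dividing by $|G|$ gives the stated formula (numerator and denominator scaled by $2$), and similarly for $(q+1)/2$. Both ratios tend to $1/4\cdot 2=1/2$, which gives the $\delta$ statement.

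For $k=q+1$ we have $\gcd(q+1,(q+1)/2)=(q+1)/2$. For the split torus, $\gcd(q+1,(q-1)/2)$ equals $\gcd(2,(q-1)/2)$, which is $2$ when $q\equiv1\pmod 4$. Hence each split torus contributes one involution, for $q(q+1)/2$ involutions in total. Adding everything up and comparing with $|G|/2$ gives the inequality. Case (4) is symmetric: for $k=q-1$ with $q\equiv 3\pmod 4$, $\gcd(q-1,(q+1)/2)=2$.

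For (5), we add the $q^2-1$ unipotent elements to the counts in (2). For $k=(q^2-1)/4$ all semisimple elements count, so the count is $|G|-(q^2-1)$, giving the ratio $1-2/q$.

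The main obstacle is purely bookkeeping. One must keep the factor $2$ straight between $\PSL$ and $\SL$ orders, and carefully evaluate the gcds $\gcd(q\pm1,(q\mp1)/2)\in\{1,2\}$ according to $q$ mod $4$ when matching the exact closed forms and verifying the inequalities against $1/2$.
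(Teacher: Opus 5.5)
Your proposal is correct and follows essentially the same route as the paper. Both arguments use the standard partition of $\PSL_2(q)$ into the identity, the $q^2-1$ elements of order $p$, and the non-identity elements of the trivially intersecting split and non-split cyclic tori, and then count the elements of order dividing $k$ class by class. The only difference is bookkeeping: the paper lists the involutions as a separate class depending on $q \bmod 4$, whereas your $\gcd(k,m)-1$ count per torus absorbs them automatically, and all of your closed forms agree with the stated ones.
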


\begin{proof}
The order of $\PSL_2(q)$ is $|\PSL_2(q)| = (q-1)q(q+1)/2$. From the well-known character table of $\PSL_2(q)$, we know the number of elements of each order: \\$\bullet$ If $q\equiv 1$ mod $4$, then the group is partitioned into the following subsets: \{$1$ element of order $1$\}, \{$q^2-1$ elements of order $p$\}, \{$(q-1)^2q/4$ elements of order dividing $(q+1)/2$\}, \{$(q-5)q(q+1)/4$ elements of order dividing $(q-1)/2$\}, and \{$q(q+1)/2$ elements of order $2$(which are not included in the previous two types of elements)\}.\\$\bullet$ If $q\equiv 3$ mod $4$, then the group is partitioned into the following subsets: \{$1$ element of order $1$\}, \{$q^2-1$ elements of order $p$\}, \{$(q-3)q(q+1)/4$ elements of order dividing $(q-1)/2$\}, \{$(q-3)q(q-1)/4$ elements of order dividing $(q+1)/2$\}, and \{$q(q-1)/2$ elements of order $2$ (which are, again, not included in the previous two types of elements)\}. \\From these, we can compute the exponent and $\rho_{k}(\PSL_2(q))$ for various $k$'s.

Both numbers in (2) converge to $1/2$ from below as $q$ increases. Therefore, for any $\delta>0$, we can find a large enough $M$ such that whenever $q>M$, $\epsilon_k\geq \rho_k(\PSL_2(q)) > 1/2-\delta$ for both $k=(q-1)/2$ and $k=(q+1)/2$.
\end{proof}

\begin{rmk}\autoref{PSL} also provides some prime numbers $k$ such that $\epsilon_k>1/2 - \delta$ for very small $\delta>0$, namely the prime numbers of the form $(3^b-1)/2$, cf. \cite[A028491]{OEIS}. For example, $797161=(3^{13}-1)/2$ is a prime, and for this number, $$\epsilon_{797161}\geq \rho_{797161}(\PSL_2(3^{13}))=1270932117001/2541867422652=0.4999993727741\dots .$$

For general primes $p\geq 5$ not of this form, we get $\epsilon_p\geq \rho_p(\PSL_2(p)) = \frac{p^2}{(p-1)p(p+1)/2} = 2p/(p^2-1)$, which is very slightly better than the lower bound $2/p$ we got from \autoref{Alt}. Later in \autoref{PSigmaL2}, we will see a much better general lower bound, namely $(p-1)/(6p)$.
\end{rmk}

\begin{prop}\label{PSL_even}
Let $q = 2^a$. \\{\upshape(1)} The exponent of $\PSL_2(q)=\SL_2(q)$ is $2(q-1)(q+1)$. \\{\upshape(2)} \begin{align*}
&\rho_{q-1}(\PSL_2(q)) = \frac{(q-2)q(q+1) +2}{2(q-1)q(q+1)}<\frac{1}{2} ,\ \rho_{q+1}(\PSL_2(q)) = \frac{(q-1)q^2+2}{2(q-1)q(q+1)}<\frac{1}{2},\\&\rho_{2(q-1)}(\PSL_2(q)) = \frac{(q-1)q(q+2)}{2(q-1)q(q+1)}>\frac{1}{2},\ \rho_{2(q+1)}(\PSL_2(q)) = \frac{q^2(q+1)}{2(q-1)q(q+1)}>\frac{1}{2}, \\&\rho_{(q-1)(q+1)}(\PSL_2(q)) =\frac{q-1}{q} > 1/2\text{ when }a\geq 2.
\end{align*}
In particular, $\epsilon_k>1/2$ for $k=2(q-1), 2(q+1)$ and $(q-1)(q+1)$.
\end{prop}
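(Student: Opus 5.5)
I will mirror the proof of \autoref{PSL}. The plan is to partition $\PSL_2(q)=\SL_2(q)$, $q=2^a\geq 4$, into classes of elements according to order, and then read off every quantity in the statement from those counts. The group has order $(q-1)q(q+1)$, and its elements split as follows, using the standard description of conjugacy classes (or the well-known character table):
\begin{itemize}[label=$\bullet$]
\item the identity;
\item $q^2-1$ involutions, which are the nontrivial unipotent elements. Each lies in a unique Sylow $2$-subgroup of order $q$, and there are $q+1$ such subgroups, giving $(q+1)(q-1)$ involutions.
\item the nontrivial elements of the split tori $C_{q-1}$. There are $q(q+1)/2$ such tori, they meet pairwise trivially, and together they contribute $(q-2)q(q+1)/2$ elements.
\item the nontrivial elements of the nonsplit tori $C_{q+1}$. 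There are $q(q-1)/2$ such tori, contributing $q\cdot q(q-1)/2$ elements.
\end{itemize}
A quick check confirms that these four counts sum to $|G|$.

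The key structural fact is that, since $q$ is even, $q-1$ and $q+1$ are odd and coprime. Hence every non-identity element has order $2$, or an odd divisor of $q-1$, or an odd divisor of $q+1$, and these three sets of orders are mutually exclusive. Part (1) follows immediately: the exponent is $\lcm(2,q-1,q+1)=2(q-1)(q+1)$.

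For part (2) I will count, for each listed $k$, which of the blocks above consist of elements of order dividing $k$.
\begin{itemize}[label=$\bullet$]
\item For $k=q-1$ these are the identity and the split-torus elements, giving $1+(q-2)q(q+1)/2$. This equals $((q-2)q(q+1)+2)/2$, which yields the stated fraction.
\item For $k=q+1$ the count is $1+q^2(q-1)/2$.
\item For $k=2(q-1)$ we add the $q^2-1$ involutions to the count for $k=q-1$. This gives $1+(q^2-1)+(q-2)q(q+1)/2=q(q+1)+\ldots$, and simplifying should give $q(q+1)q/2$, which must match $(q-1)q(q+2)/2$ after rechecking. I expect some bookkeeping here, so this is where I will be careful with the algebra.
\item The case $k=2(q+1)$ is handled in the same way.
\item For $k=(q-1)(q+1)$, every element except the involutions qualifies, so the count is $|G|-(q^2-1)$. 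The ratio is then $1-1/q=(q-1)/q$.
\end{itemize}

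Finally, the inequalities against $1/2$ follow by comparing numerators with $(q-1)q(q+1)$ for $q\geq 4$. For example, $(q-2)q(q+1)+2<(q-1)q(q+1)$ because $q(q+1)>2$. The ``in particular'' statement follows from the definition $\epsilon_k\geq\rho_k(\PSL_2(q))$, since $\PSL_2(q)$ is non-soluble for $q\geq 4$.

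The only real obstacle is double-checking the element counts, in particular that the tori intersect trivially and the exact numbers of split and nonsplit tori. These counts follow from the normalisers being dihedral of orders $2(q\mp1)$.
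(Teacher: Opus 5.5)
Your proposal is the paper's proof: it uses the same partition of $\SL_2(q)$ into the identity, the $q^2-1$ involutions, and the nontrivial elements of the split and nonsplit tori (of sizes $(q-2)q(q+1)/2$ and $q^2(q-1)/2$), and reads off each $\rho_k$ using $\gcd(q-1,q+1)=1$. The only slip is your garbled bookkeeping for $k=2(q-1)$: the count simplifies directly as $q^2+(q-2)q(q+1)/2=(q-1)q(q+2)/2$, whereas the value $q^2(q+1)/2$ you wrote is the count for $k=2(q+1)$.
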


\begin{proof}
$\PSL_2(q)$ can be partitioned into the following subsets: \{$1$ element of order $1$\}, \{$q(q+1)(q-2)/2$ elements of order dividing $q-1$\}, \{$q^2(q-1)/2$ elements of order dividing $q+1$\}, and \{$q^2-1$ elements of order $2$\}.
\end{proof}

Again, \autoref{PSL_even} shows that if $k$ is a prime of the form $2^a\pm 1$, then $\epsilon_k>1/2-\delta$ for some small $\delta>0$. Note that both \autoref{PSL} and \autoref{PSL_even} are not capable of proving that $\epsilon_k>1/2$ when $k$ is a prime power: \autoref{PSL}(5) and \autoref{PSL_even} require $k$ to have two divisors whose gcd is either $1$ or $2$, and \autoref{PSL}(3) requires $k$ to be $2$ mod $4$.

\begin{prop}\label{Suzuki}
Let $q = 2^{2a+1}$ and $r=2^a$ for some $a\in \mathbb{Z}_{>0}$. Then the exponent of the Suzuki group $\vphantom{a}^2B_2(q)=\Sz(q)$ is $4(q^2+1)(q-1)$, and
\begin{align*}
&\rho_{4(q^2+1)}(\Sz(q)) = \frac{q}{2(q-1)}>1/2,\ \rho_{4(q-1)}(\Sz(q)) = \frac{q^3+q-2}{2(q^3-q^2+q-1)}>1/2, \\&\rho_{(q+2r+1)(q-1)}(\Sz(q)) = \frac{  3q^3 - 6q^2 + 2qr + 3q -2r-4}{4(q-1)(q^2+1)}>1/2,\\&\rho_{(q-2r+1)(q-1)}(\Sz(q)) = \frac{  3q^3 - 6q^2 - 2qr + 3q +2r-4}{4(q-1)(q^2+1)}>1/2,\\&\rho_{q-1}(\Sz(q)) = \frac{q-2}{2(q-1)}+\frac{1}{q^2(q-1)(q^2+1)} =\frac{1}{2}-\frac{1}{2(q-1)}+\frac{1}{q^2(q-1)(q^2+1)},\\& \rho_{q^2+1}(\Sz(q)) = \frac{q^2-q}{2(q^2+1)} + \frac{1}{q^2(q-1)(q^2+1)} =  \frac{1}{2} - \frac{q+1}{2(q^2+1)} +\frac{1}{q^2(q-1)(q^2+1)} .
\end{align*}
Also, if $d$ is a divisor of $q\pm 2r +1 $ such that $$\frac{d-1}{4(q\pm 2r +1)}>\frac{1}{2(q-1)}-\frac{1}{q^2(q-1)(q^2+1)},$$ then $\rho_{d(q-1)}(\Sz(q))>1/2$. In particular, any divisor $d>3$ can be used, and $d=3$ can be used if it divides $q-2r+1$. Similarly $\rho_{d(q^2+1)}(\Sz(q))>1/2$ for any divisor $d>1$ of $q-1$.
\end{prop}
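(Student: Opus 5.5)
The plan is to follow the pattern of the proofs of \autoref{PSL} and \autoref{PSL_even}. I will partition $\Sz(q)$ by element order using its standard structure, and then count the elements whose order divides each $k$ in the statement.

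Recall $|\Sz(q)|=q^2(q-1)(q^2+1)$ and $q^2+1=(q+2r+1)(q-2r+1)$. The partition I intend to use is the following.
\begin{itemize}
\item The identity.
\item The involutions, of which there are $(q^2+1)(q-1)$.
\item The elements of order $4$, of which there are $q(q^2+1)(q-1)$. Together with the involutions and the identity, these fill the Sylow $2$-subgroups, which form a TI family of $q^2+1$ subgroups of order $q^2$.
\item The nonidentity elements of the cyclic subgroups of order $q-1$. These subgroups are TI and self-centralising, and their normalisers are dihedral of order $2(q-1)$. There are $q^2(q^2+1)/2$ conjugates, giving $q^2(q^2+1)(q-2)/2$ elements.
\item The nonidentity elements of the cyclic subgroups of order $q\pm 2r+1$. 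Their normalisers have order $4(q\pm 2r+1)$, so there are $q^2(q-1)(q\mp 2r+1)/4$ conjugates. This gives $q^2(q-1)(q\mp2r+1)(q\pm2r+1-1)/4$ elements of each type.
\end{itemize}
As a check, these counts must sum to $|\Sz(q)|$. The orders $4$, $q-1$ and $q\pm2r+1$ are pairwise coprime and every element lies in one of these classes, so the exponent is $\lcm(4,q-1,q^2+1)=4(q-1)(q^2+1)$.

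Next I compute each ratio by adding the relevant blocks and dividing by the group order.
\begin{itemize}
\item For $\rho_{4(q^2+1)}$, I add the $2$-part and both odd tori $q\pm 2r+1$.
\item For $\rho_{4(q-1)}$, I add the $2$-part and the $(q-1)$-tori.
\item For $\rho_{(q\pm 2r+1)(q-1)}$, I add the identity, the $(q-1)$-tori and one of the two odd tori.
\item For $\rho_{q-1}$ and $\rho_{q^2+1}$, I add the identity and the relevant tori only.
\end{itemize}
The inequalities against $1/2$ then reduce to polynomial comparisons in $q,r$ with $q=2r^2$. These are routine, and I will check small $a$ (namely $q=8$) directly where the asymptotic comparison is not obvious.

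For the divisor statements I argue as follows. The elements whose order divides $d(q-1)$, for $d\mid q\pm2r+1$, are the identity, the $(q-1)$-torus elements, and the elements of order dividing $d$ in the tori of order $q\pm2r+1$. Each such cyclic torus contains exactly $d-1$ nonidentity elements of order dividing $d$, and the number of tori is $q^2(q-1)(q\mp2r+1)/4$. So the extra contribution is $(d-1)q^2(q-1)(q\mp2r+1)/(4|\Sz(q)|)=(d-1)/(4(q\pm2r+1))$.

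The expression for $\rho_{q-1}$ shows that the deficit from $1/2$ is $\frac{1}{2(q-1)}-\frac{1}{q^2(q-1)(q^2+1)}$, which gives exactly the stated criterion. For $d>3$ I will bound $d-1\geq 4$ and compare $1/(q+2r+1)$ with $1/(2(q-1))$, which holds since $q\pm2r+1<2(q-1)$. For $d=3$ with $3\mid q-2r+1$, the ratio $2/(4(q-2r+1))$ also exceeds the deficit.

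The case $d(q^2+1)$ with $d\mid q-1$ works the same way. The contribution is $(d-1)/(2(q-1))$, and this must beat the deficit $\frac{q+1}{2(q^2+1)}-\frac{1}{q^2(q-1)(q^2+1)}$. It does for $d\geq 2$, because $\frac{1}{2(q-1)}>\frac{q+1}{2(q^2+1)}$ is equivalent to $q^2+1>q^2-1$.

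The main obstacle is getting the element counts right, in particular the normaliser orders of the tori (the factor $4$ versus $2$) and the number of elements of order $4$. Everything afterwards is bookkeeping. I will cross-check the partition by verifying that the counts sum to $q^2(q-1)(q^2+1)$ and against $\Sz(8)$, which has order $29120$.
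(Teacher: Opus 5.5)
Your proposal is correct and is essentially the paper's argument: partition $\Sz(q)$ into the TI Sylow $2$-subgroups and the TI cyclic tori of orders $q-1$ and $q\pm 2r+1$, use pairwise coprimality of $4$, $q-1$, $q\pm 2r+1$, and add up blocks. You derive the torus counts from normaliser orders where the paper quotes Suzuki's counts directly, and your split into $(q-1)(q^2+1)$ involutions and $q(q-1)(q^2+1)$ elements of order $4$ is the right one, though only their total enters; you should also state the coprimality check, e.g.\ $\gcd(q-1,q+2r+1)=\gcd(2r^2-1,2r+2)=1$, as the paper does. One thing your bookkeeping will expose: since you correctly include the identity in $\rho_{(q\pm 2r+1)(q-1)}$, you will get the displayed numerators plus an extra $\frac{1}{q^2(q-1)(q^2+1)}$ (for $\Sz(8)$ this gives $19201/29120=211/320$, as in the paper's table, not $60/91$), so those two closed forms in the statement omit the identity term, which does not affect the inequalities $>1/2$.
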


\begin{proof}
Recall the following information about $\Sz(q)$, reported by Suzuki himself \cite{Suzuki} when he discovered the group: \\$\bullet$ $|\Sz(q)| = q^2(q-1)(q^2+1)$.\\$\bullet$ It has $(q-2r+1)(q-1)q^2/4$ cyclic subgroups of order $q+2r+1$, and every two of them intersect trivially. Similarly, it has $(q+2r+1)(q-1)q^2/4$ cyclic subgroups of order $q-2r+1$, $q^2(q^2+1)/2$ cyclic subgroups of order $q-1$, and $q^2+1$ Sylow $2$-subgroups, and every two of these also trivially intersect.  (Note that $(q+2r+1)(q-2r+1)=q^2+1$.)\\$\bullet$ Consequently, it has $1$ element of order $1$, $(q+2r)(q-2r+1)(q-1)q^2/4$ elements of order dividing $q+2r+1$, $(q-2r)(q+2r+1)(q-1)q^2/4$ elements of order dividing $q-2r+1$, $(q-2)q^2(q^2+1)/2$ elements of order dividing $q-1$, $q(q^2+1)$ elements of orders $2$, and $(q^2-q-1)(q^2+1)$ elements of order $4$. \\$\bullet$ The number of nonidentity elements of order dividing $d$ for a divisor $d$ of $q+2r+1$ equals $(d-1)(q-2r+1)(q-1)q^2/4$, and a similar formula works for the divisors of $q-2r+1$ and $q-1$. 

Also note that $\gcd(q-1, q+2r+1) =  \gcd(q-1, 2r+2) = \gcd(2r^2-1,2r+2) = \gcd(2r+1,2r+2)=1$. Similarly $\gcd(q-1,q-2r+1) = \gcd(2r^2-1, 2r-2) = \gcd(2r-1,2r-2)=1$ and $\gcd(q+2r+1,q-2r+1) = \gcd(q+2r+1, 2^{a+2}) = 1$. From these information we can compute the $\rho_k$ for various $k$'s listed.
\end{proof}

The next group gives our first example of a prime power $k$ for which the conjecture fails.
\begin{exm}\label{M10}
The Mathieu group of degree 10, $\text{M}_{10}$, has: $$\epsilon_8 \geq \rho_8(\text{M}_{10})=31/45 > \rho_8(\Aut(A_6)) = 26/45>1/2>\rho_8(A_6)=17/45.$$ 
This example also tells us that we can get almost simple groups with better ratio than their socle.
\end{exm}

Our next group is $\PSigmaL_2$ over certain fields, which covers the cases $k=2q$ and $3q$ for all primes $q>3$, and even gives a general lower bound (which is much smaller than $1/2$ but larger than $1/7$) for all numbers having a prime divisor $\geq 5$. Although the case $k=9$, which would result from $p=q=3$ in the statement of \autoref{PSigmaL2} below, falls into the exceptional cases we left out for some technical difficulties, one can easily verify that $\epsilon_9\geq \rho_9(\PSigmaL_2(3^3))=191/364>1/2$. We believe that the proof can be improved to include some of such exceptions. 

\begin{lem}\label{trivial-intersection}
Let $G=N\rtimes K$, and let $H$ be a subgroup of $N$ normalised by $K$ such that $\left(H\rtimes K\right)\cap \left(H\rtimes K\right)^g \leq N$ for any $g\notin H\rtimes K$. Then for any $k\in \mathbb{Z}_{>0}$, we have $$\rho_{k}(G) = \rho_{k}(H\rtimes K) - \frac{1}{|K|} \rho_{k}(H) + \frac{1}{|K|} \rho_{k}(N).$$
\end{lem}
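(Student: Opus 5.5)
The plan is a partition-and-count argument. Multiplying through by $|G|=|N||K|$, the identity to prove becomes
$$\#\{g\in G: g^k=1\} = \frac{|N|}{|H|}\,\#\{x\in HK: x^k=1\} - \frac{|N|}{|H|}\,\#\{h\in H: h^k=1\} + \#\{n\in N: n^k=1\},$$
where I write $HK$ for $H\rtimes K$. Split $G$ into the elements lying in $N$ and those outside $N$. The elements of $N$ contribute exactly the last term. So it suffices to show that the number of elements of $G\setminus N$ of order dividing $k$ equals $|G:HK|$ times the number of elements of $HK\setminus H$ of order dividing $k$. This works because $|G:HK|=|N|/|H|$ and $HK\cap N=H$ (by the semidirect decomposition and $H\le N$).

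To prove this, I would first show that the conjugates $(HK)^g$ with $g$ ranging over coset representatives of $HK$ in $G$ have pairwise intersections inside $N$. Indeed, if $(HK)^{g_1}\cap (HK)^{g_2}\not\le N$, then conjugating by $g_1^{-1}$ and using $N\unlhd G$ gives $HK\cap (HK)^{g_2g_1^{-1}}\not\le N$. The hypothesis then forces $g_2g_1^{-1}\in HK$, so the two conjugates coincide. Also, $N_G(HK)=HK$: if $g$ normalises $HK$ and $g\notin HK$, then $HK\cap (HK)^g=HK\not\le N$ (as $K\ne 1$; if $K=1$ the formula is trivial). Hence there are exactly $|G:HK|$ distinct conjugates, and their non-$N$ parts $(HK)^g\setminus N = (HK\setminus H)^g$ are pairwise disjoint. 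Since conjugation preserves order, each conjugate contains the same number of elements outside $N$ of order dividing $k$ as $HK\setminus H$ does.

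The main obstacle is showing that these conjugates cover all of $G\setminus N$. I would use a counting argument. The disjoint union has size $|G:HK|\cdot(|H||K|-|H|) = |N|(|K|-1)$, which is exactly $|G\setminus N|=|N||K|-|N|$. So disjointness plus this cardinality count gives $G\setminus N=\bigsqcup_g (HK\setminus H)^g$. Summing the counts of elements with $x^k=1$ over this partition, and adding the contribution of $N$, yields the displayed identity; dividing by $|G|$ gives the stated formula. The points to double-check are the degenerate cases ($K=1$ or $H=N$, where the claim reduces to a tautology) and that $HK\cap N=H$ holds as claimed.
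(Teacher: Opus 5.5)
Your proposal is correct and follows the paper's proof. In both, $G$ is partitioned into $N$ and the pairwise disjoint conjugates of $(H\rtimes K)\setminus H$, the covering is obtained from the count $|G:H\rtimes K|\cdot|H|(|K|-1)=|G|-|N|$, and elements of order dividing $k$ are counted part by part. You also justify two points the paper leaves implicit: that $H\rtimes K$ is self-normalising, so it has exactly $|G:H\rtimes K|$ distinct conjugates, and that the degenerate case $K=1$ holds trivially.
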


\begin{proof}
The conditions imply that every two different $G$-conjugates of $H\rtimes K$ can only intersect in $N$. Therefore, the conjugates of the subsets $H\rtimes K - H = H\rtimes (K-\{1\})$ are pairwise disjoint. Also, if $(H\rtimes K - H)^{g_1} = (H\rtimes K - H)^{g_2}$ for some $g_1, g_2\in G$, then $(H\rtimes K)^{g_1} = (H\rtimes K)^{g_2}$. Since there are $|G:H\rtimes K|$ distinct conjugates of $H\rtimes K$, the union of the conjugates of $H\rtimes K - H$ has $|H\rtimes K-H|\cdot |G:H\rtimes K| = \left(|K|-1\right)|G|/|K| = |G|-|N|$ elements. Therefore, $N$ together with the conjugates of $H\rtimes K-H$ form a partition of $G$, so the number of the elements of order dividing $k$ in $G$ can be counted separately for each part: $$|G|\rho_k(G) =  \frac{|G|}{|H\rtimes K|} \left(|H\rtimes K|\rho_k(H\rtimes K) - |H|\rho_k(H)\right)  +  |N|\rho_k(N).$$
\end{proof}

\begin{lem}\label{commuting}
Let $G=N\rtimes K$, and let $H\leq \mathbf{C}_N(K)$. Suppose that $|H|$ and $|K|$ are relatively prime. Then for any $g\in G$ such that $\mathbf{C}_K(g)=1$, $\left(H\times K\right)\cap\left(H\times K\right)^g\leq N$. In particular, if $H\leq \mathbf{C}_N(k)\leq \mathbf{N}_N(H)$ for every nonidentity $k\in K$, then every $g\notin \mathbf{N}_G(H\times K)$ satisfies this condition.
\end{lem}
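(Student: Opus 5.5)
The plan is to argue by contradiction, using the coprimality of $|H|$ and $|K|$ to separate an element of $H\times K$ into its $H$-part and its $K$-part. Note first that $HK=H\times K$ because $H\leq \mathbf{C}_N(K)$, and that $H\cap K\leq N\cap K=1$.

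\emph{Step 1: the $K$-part is a power of the element.} Let $x=hk$ with $h\in H$ and $k\in K$. By the Chinese remainder theorem, choose $m$ with $m\equiv 1 \pmod{|K|}$ and $m\equiv 0\pmod{|H|}$. Since $h$ and $k$ commute, $x^m=h^mk^m=k$. Consequently, every subgroup of $G$ containing $x$ also contains $k$.

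\emph{Step 2: produce two elements of $K$ that are conjugate by $g$.} Suppose $x\in (H\times K)\cap (H\times K)^g$ with $x\notin N$, and write $x=hk$ as above.
\begin{itemize}
\item Because $x\notin N$, we have $k\neq 1$.
\item By Step~1, $k$ lies in the intersection, so $k=y^g$ for some $y=h'k'\in H\times K$.
\item Applying Step~1 to $y$ with the same exponent $m$ gives $(k')^g=(y^m)^g=(y^g)^m=k^m=k$, and $k'\neq 1$.
\end{itemize}
So $g$ conjugates a nonidentity element $k'\in K$ to an element $k\in K$.

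\emph{Step 3: use the semidirect product structure.} Write $g=cn$ with $c\in K$ and $n\in N$. Modulo $N$ we have $k=(k')^g\equiv (k')^c$. Both $k$ and $(k')^c$ lie in $K$, and $K\cap N=1$, so $k=(k')^c$. Then $(k')^{cn}=k$ forces $k^n=k$, that is, $n\in \mathbf{C}_N(k)$.

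The intended reading of the hypothesis ``$\mathbf{C}_K(g)=1$'' must be whatever excludes this configuration: $g$ lies in $K\,\mathbf{C}_N(k)$ for some $1\neq k\in K$. The central step is to check that this is exactly what the hypothesis rules out, and doing so is where I expect the main obstacle. The notation $\mathbf{C}_K(g)$ is ambiguous here, and should be pinned down as ``no nonidentity element of $K$ is sent into $K$ by $g$ modulo this twist''. Once that is fixed, the contradiction is immediate.

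\emph{Step 4: the ``in particular'' clause.} Assume $H\leq \mathbf{C}_N(k)\leq \mathbf{N}_N(H)$ for every $1\neq k\in K$. Then in the configuration of Step~3 we have $n\in \mathbf{C}_N(k)\leq \mathbf{N}_N(H)$, and $c$ normalises $H\times K$. The aim is to show that $g$ normalises $H\times K$, so that any $g\notin \mathbf{N}_G(H\times K)$ avoids the bad configuration.

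The delicate point is that $n$ is only known to centralise $k$, not all of $K$. To get around this, I would first try to upgrade Step~3 to the full intersection. Let $L=(H\times K)\cap (H\times K)^g$. By Step~1 its $K$-parts form a subgroup $K_0$ with $K_0=(K_0')^g$, and I would run the same argument with $K_0$ in place of $\langle k\rangle$. Then $n$ normalises $H$ and centralises $K_0$. From this I would aim to show that conjugation by $g$ preserves $H\times K$: it preserves $H$ by the above, and for $K$ a Frattini/Schur--Zassenhaus-type argument inside $N\rtimes K$ should handle the complements. This is the step I expect to need the most care, possibly by strengthening the hypothesis to $\mathbf{C}_N(k)$ being the same subgroup for all $1\neq k\in K$.
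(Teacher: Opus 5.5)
Your Steps 1--3 are correct and already prove the main assertion. The ``obstacle'' you flag is only a matter of how the hypothesis is read.

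\textbf{The hypothesis.} The paper writes $g=(n,b)$ with $n\in N$, $b\in K$, and its proof ends with $a^m\in\mathbf{C}_K(n)=1$. So the hypothesis actually used is $\mathbf{C}_K(n)=1$ for the $N$-component of $g$. The $N$-components of $g=cn$ and of $g=(cnc^{-1})c$ are $K$-conjugate. Hence this hypothesis is exactly the negation of your configuration ($n\in\mathbf{C}_N(k)$ for some $1\neq k\in K$), and Step 3 gives the contradiction directly. With the literal centraliser of $g$ the statement is false. Let $K=S_3$ permute the coordinates of $N=\mathbb{F}_2^3$, take $H=1$ and $g=e_3\cdot(1\,2\,3)$. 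Then $\mathbf{C}_K(g)=\mathbf{C}_K(e_3)\cap\mathbf{C}_K((1\,2\,3))=1$. But $(1\,2)$ commutes with $e_3$, so a conjugate of $(1\,2)$ lies in $K\cap K^g$.

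\textbf{Comparison with the paper.} The paper computes $(h,a)^g$ in coordinates and gets $h_1=n^{-1}h\,{}^an\in H$. Since $a$ fixes $H$ pointwise, it telescopes $h_1^m=n^{-1}h^m\,{}^{a^m}n$. Taking $m=\lcm(o(h),o(h_1))$ gives $a^m\in\mathbf{C}_K(n)=1$, and coprimality then forces $a=1$. You spend the coprimality up front, via the power $x^m=k$, to put the $K$-part itself into the intersection; after that, reducing modulo $N$ is all that remains. Your version is shorter, and it isolates the only obstruction: $n$ centralising a nonidentity element of $K$.

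\textbf{Step 4 is a genuine gap.} No Frattini or Schur--Zassenhaus argument will close it, because the ``in particular'' clause is false at the stated generality. Take $N=C_3=\langle n\rangle$ and $K=\langle s\rangle\times\langle t\rangle\cong C_2^2$, with $s$ acting trivially and $t$ by inversion, and $H=1$ (so $G\cong S_3\times C_2$).
\begin{itemize}
\item Every hypothesis holds, since $1\leq\mathbf{C}_N(k)\leq N=\mathbf{N}_N(H)$.
\item $n\notin\mathbf{N}_G(K)$, because $t^n\in tN\setminus\{t\}$ and $K\cap tN=\{t\}$.
\item Yet $s\in K\cap K^n$ and $s\notin N$.
\end{itemize}
This is precisely your worry that $n$ centralises $k$ but not all of $K$.

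The paper's own argument has the same hole. It asserts that $g=(n,b)\notin\mathbf{N}_G(H\times K)$ forces $n\notin\mathbf{N}_N(H)$, which needs $\mathbf{N}_N(H)\leq\mathbf{N}_N(H\times K)$.

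\textbf{Repair.} Either add the hypothesis $\mathbf{N}_N(H)\leq\mathbf{N}_N(H\times K)$ (for example $\mathbf{N}_N(H)=H$), or use the strengthening you propose at the end. The latter is the situation in which the lemma is actually used, namely \autoref{Classical_Groups}(3), where $\mathbf{C}_N(F)=H$ is self-normalising for every nonidentity $F$. Suppose $\mathbf{C}_N(k)=C$ for all $1\neq k\in K$, with $H\leq C\leq\mathbf{N}_N(H)$. Then your Step 3 finishes at once: $n\in C$ centralises $K$ and normalises $H$, and $c\in K$ normalises $H\times K$, so $g=cn\in\mathbf{N}_G(H\times K)$.
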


\begin{proof}
Suppose that $g\in G$, $h\in H$, and $a\in K$ be elements such that $(h,a)^g\in H\times K$. We may write $g=(n,b)$ for some $b\in K$ and $n\in N$. Then (with the understanding that $K$ acts on $N$ on the left)$$(h,a)^g = (n,b)^{-1}(h,a)(n,b) = \left(\vphantom{a}^{b^{-1}}\left(n^{-1}\right), b^{-1}\right)(h\ ^{a}n, ab) = \left(\vphantom{a}^{b^{-1}}\left(n^{-1}h\ ^an\right), b^{-1}ab\right)$$ so $n^{-1}h\ ^an\in H$. Since $h\in H\leq \mathbf{C}_N(a)$, we get $\vphantom{a}^a\left(n^{-1}h\ ^an\right) = \vphantom{a}^a\left(n^{-1}\right)\ ^ah\ ^{a^2}n = \vphantom{a}^a\left(n^{-1}\right)h\ ^{a^2}n \in H$. It follows that for any $m\in \mathbb{Z}_{>0}$, $$\left(n^{-1}h\ ^an\right)^m = \left(n^{-1}h\ ^an\right)\ ^a\left(n^{-1}h\ ^an\right)\cdots \ ^{a^{m-1}}\left(n^{-1}h\ ^an\right) = n^{-1}h^m\left(\vphantom{a}^{a^m}n\right). $$ In particular, if we choose $m=\lcm\left(o(h),o(n^{-1}h\ ^an)\right)$, then we get $1 = n^{-1}\ ^{a^m}n$, so that $n=\vphantom{a}^{a^m}n$. Therefore $a^m\in \mathbf{C}_K(n)=1$. Since $m$ divides the exponent of $|H|$, it is relatively prime to $|K|$, we get $a=1$ and $(h,a)=(h,1)\in H\leq N$.

If $H\leq \mathbf{C}_N(k)\leq \mathbf{N}_N(H)$ for every nonidentity $k\in K$ and if $g\in G - \mathbf{N}_G(H\times K)$, then $g=(n,k)$ for some $n\in N-\mathbf{N}_N(H)$, so $\mathbf{C}_K(n)=1$.
\end{proof}

\begin{lem}\label{Classical_Groups}
Let $\Gamma$ be an almost simple (projective) classical group of Lie type. Let $G$ be the subgroup of inner and diagonal automorphisms of $\soc(\Gamma)$ in $\Gamma$, and $\Phi \cong \Gamma/G$ be the subgroup of (standard) field and graph automorphisms of $\soc(\Gamma)$ in $\Gamma$, so that $\Gamma = G\rtimes \Phi$. Let $H$ be a self-normalising subgroup of $G$ which is normalised by $\Phi$. Then \\{\upshape(1)} $H\rtimes \Phi$ is self-normalising in $\Gamma$.\\{\upshape(2)} If $H\leq \mathbf{C}_N(\Phi)$ and if $|\Phi|$ is relatively prime to $\left| H\right|$, then for any $g\in \Gamma$ such that $\mathbf{C}_{\Phi}(g)=1$, $(H\rtimes \Phi)\cap (H\rtimes \Phi)^g \subseteq H$.\\{\upshape(3)} In the situation of (2), if $\mathbf{C}_N(F)=H$ for all nonidentity $F\in \Phi$, then the subset $G$ together with the conjugates of $\left(H\rtimes \Phi\right) - H = H\times (\Phi-\{1\})$ form a partition of $\Gamma$. In particular, $$\rho^*_{kd}(\Gamma) = \frac{|\Phi|-1}{|\Phi|} \rho^*_k\left(H\right) +\frac{1}{|\Phi|} \rho^*_{kd}\left( G\right)$$ for each $d\in \mathbb{Z}_{>1}$ dividing $|\Phi|$.
\end{lem}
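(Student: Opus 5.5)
Part (1) reduces to an explicit computation of normalisers inside $\Gamma = G\rtimes\Phi$. Take $x = g\phi\in\Gamma$ with $g\in G$, $\phi\in\Phi$, and suppose $x$ normalises $H\rtimes\Phi$. Since $G\unlhd\Gamma$, $x$ also normalises $(H\rtimes\Phi)\cap G = H$; the equality holds because $H\le G$ and $\Phi\cap G=1$. The automorphism $\phi$ normalises $H$ by hypothesis, so $g = x\phi^{-1}$ normalises $H$ as well. Then $g\in\mathbf{N}_G(H)=H$, because $H$ is self-normalising in $G$. Hence $x\in H\Phi = H\rtimes\Phi$, and (1) follows.

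Part (2) is an instance of \autoref{commuting} with $N=G$ and $K=\Phi$; I read $\mathbf{C}_N(\Phi)$ in the statement as $\mathbf{C}_G(\Phi)$. We have $H\le\mathbf{C}_G(\Phi)$ and $\gcd(|H|,|\Phi|)=1$, and $H\rtimes\Phi = H\times\Phi$ because $\Phi$ centralises $H$. So for every $g\in\Gamma$ with $\mathbf{C}_\Phi(g)=1$, that lemma gives $(H\times\Phi)\cap(H\times\Phi)^g\le G$. Intersecting with $H\times\Phi$ gives $(H\times\Phi)\cap G = H$, so the intersection lies in $H$.

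Part (3) is where the work is. The hypothesis $\mathbf{C}_G(F)=H$ for all $1\ne F\in\Phi$ gives $H\le\mathbf{C}_G(F)\le\mathbf{N}_G(H)$ trivially, so the second half of \autoref{commuting} applies. Every $g\notin\mathbf{N}_\Gamma(H\times\Phi)$ then satisfies $(H\times\Phi)\cap(H\times\Phi)^g\le G$. By (1) this covers every $g\notin H\times\Phi$, which is exactly the hypothesis of \autoref{trivial-intersection}. The counting in that proof then shows that $G$ together with the $|\Gamma:H\times\Phi|$ pairwise disjoint conjugates of $H\times(\Phi-\{1\})$ partitions $\Gamma$, since their total size is $|\Gamma|-|G|$.

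For the formula, I count elements of order exactly $kd$ part by part. An element of $G$ contributes $|G|\rho^*_{kd}(G)$ in total, and $|G|/|\Gamma| = 1/|\Phi|$. An element $(h,f)$ of $H\times(\Phi-\{1\})$ has order $\lcm(o(h),o(f))$, and $\gcd(|H|,|\Phi|)=1$. Its order is therefore $kd$ precisely when $o(h)=k$ and $o(f)=d$.

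This is the main obstacle: it forces reading $\rho^*_k(H)$ as weighted by the proportion of $f\in\Phi$ of order $d$. If $\Phi$ is cyclic of prime order $d$, that proportion is $(|\Phi|-1)/|\Phi|$ times the full count, which recovers exactly the stated coefficient. I would use that case to match the formula, with $k$ coprime to $|\Phi|$. The stated identity is then
$|\Gamma:H\times\Phi|\cdot|H|\rho_k^*(H)(|\Phi|-1)/|\Gamma| = \frac{|\Phi|-1}{|\Phi|}\rho^*_k(H)$.
In general I would state the formula with the factor $\#\{f\in\Phi: o(f)=d\}/|\Phi|$ in place of $(|\Phi|-1)/|\Phi|$.
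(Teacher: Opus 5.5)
Your proof of (1), (2) and of the partition in (3) is the paper's argument: the normaliser computation uses that $\Phi$ normalises $H$ and $\mathbf{N}_G(H)=H$, and the partition comes from \autoref{commuting} together with the count $|\Gamma:H\rtimes\Phi|\,|H|(|\Phi|-1)=|\Gamma|-|G|$, so it is correct. The paper only says ``in particular'' for the displayed formula, and your count is right: for $k$ coprime to $|\Phi|$ the coefficient of $\rho^*_k(H)$ is $\#\{f\in\Phi: o(f)=d\}/|\Phi|$, which equals $(|\Phi|-1)/|\Phi|$ when every nonidentity element of $\Phi$ has order $d$, as in the paper's only application (\autoref{PSigmaL2}, $\Phi\cong C_q$, $q$ prime); in that prime-order case you can even drop the restriction that $k$ be coprime to $|\Phi|$, since $o((h,f))=o(h)d$ for all $f\neq 1$, so if $d\mid k$ then $\rho^*_k(H)=0$ and no element of $\Gamma\setminus G$ has order $kd$.
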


\begin{proof}
(1) If $\gamma = (g, F)\in \mathbf{N}_{\Gamma}(H\rtimes \Phi)$, then since $(1_G,F)$ also normalises $H\rtimes \Phi$, $(g,1)$ should normalise $H\rtimes \Phi$. Since $H^{(g,1)} \leq H\rtimes \Phi \cap G = H$, we get $g\in \mathbf{N}_{G}(H)=H$ and $\gamma \in H\rtimes \Phi$.

(2) This is immediate from \autoref{commuting}.

(3) By \autoref{commuting}, every $g\in G -  \left(H\times \Phi\right) = G-\mathbf{N}_\Gamma(H\times \Phi)$ satisfies the condition of part (2). Therefore, the conjugates of $(H\rtimes \Phi)-H$ are pairwise disjoint. There are $|\Gamma : \left(H\rtimes \Phi\right)|=|\Gamma : \mathbf{N}_\Gamma\left(H\rtimes \Phi\right)|$ distinct conjugates of this subset. Therefore, the union of the conjugates of this subset has $|\Gamma:\left(H\rtimes \Phi\right)| |H|\left(|\Phi|-1\right) = |\Gamma|-|G|$ elements. Since they are subsets of $\Gamma -G$,  it follows that these subsets and $G$ form a partition of $\Gamma$.
\end{proof}

\begin{crl}\label{PSigmaL2}
Let $p$, $q$ be primes such that $q$ does not divide $(p-1)p(p+1)$. Then for any divisor $d$ of $p-1$, $p$ or $p+1$, we have $$\rho_{dq}^*(\PGammaL_2(p^q)) = \frac{1}{q} \rho_{dq}^*(\PGL_2(p^q))+ \frac{q-1}{q}\rho_d^*(\PGL_2(p)).$$ The same formula holds if we replace $\PGammaL$ and $\PGL$ with $\PSigmaL$ and $\PSL$. In particular, if $q> 3$ then 
\begin{align*}
&\epsilon_{2q}\geq \rho_{2q}(\PGammaL_2(2^q)) = \frac{2^{q}}{q(2^q-1)(2^q+1)} +  \frac{2(q-1)}{3q}  >1/2,\\&\epsilon_{3q}\geq  \rho_{3q}(\PSigmaL_2(3^q))=  \frac{2\cdot 3^q}{q(3^q-1)(3^q+1)} +\frac{3(q-1)}{4q}>1/2,\\&\epsilon_q\geq \rho_q(\PGammaL_2(2^q)) =  \frac{1}{q(2^q-1)2^q(2^q+1)} + \frac{q-1}{6q}.
\end{align*}
\end{crl}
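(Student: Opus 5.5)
The plan is to apply \autoref{Classical_Groups}(3) with $\Gamma=\PGammaL_2(p^q)$, $G=\PGL_2(p^q)$, $\Phi=\langle F\rangle\cong C_q$ generated by the Frobenius $x\mapsto x^p$, and $H=\PGL_2(p)$. For $\PSigmaL$ the choices are $G=\PSL_2(p^q)$ and $H=\PSL_2(p)$. We then read off the densities from the resulting partition.

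\textbf{Checking the hypotheses.} The hypotheses of \autoref{Classical_Groups} are verified as follows.
\begin{itemize}
\item Since $q$ is prime, every nonidentity $F^i\in\Phi$ generates $\Phi$ and has fixed field $\mathbb{F}_p$. Hence $\mathbf{C}_G(F^i)=\PGL_2(p)=H$.
\item In the $\PSL$ case we use that $q$ is odd, so a scalar in $\mathbb{F}_p^\times$ is a square in $\mathbb{F}_{p^q}$ iff it is a square in $\mathbb{F}_p$. This gives $\PSL_2(p^q)\cap\PGL_2(p)=\PSL_2(p)$.
\item Coprimality of $|\Phi|=q$ and $|H|$ is exactly the hypothesis $q\nmid (p-1)p(p+1)$.
\item Self-normalisation of $H$ in $G$ I expect to be the main obstacle. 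I would deduce it from the fact that subfield subgroups $\PGL_2(p)$ (resp. $\PSL_2(p)$) of prime index degree $q$ are maximal in $\PGL_2(p^q)$ (resp. $\PSL_2(p^q)$), and that $H$ is not normal. For small $p$ such as $p=2,3$ the group $H$ is soluble, so I would quote Dickson's list of subgroups of $\PSL_2$ here rather than use a general maximality theorem.
\end{itemize}

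\textbf{Deriving the formula.} Once the hypotheses hold, $\Gamma$ is partitioned into $G$ and the conjugates of $H\times(\Phi\setminus\{1\})$. An element $hF^i$ with $i\ne 0$ and $h\in H$ has order $o(h)\cdot q$, because $h$ commutes with $F^i$ and the orders are coprime. So $hF^i$ has order $dq$ iff $o(h)=d$. Counting the $|\Gamma:H\rtimes\Phi|$ conjugates, each contributing $|H|(q-1)$ elements, gives the displayed formula for $\rho^*_{dq}$. Summing over divisors gives the analogous identity
\[
\rho_{dq}(\Gamma)=\tfrac1q\rho_{dq}(G)+\tfrac{q-1}{q}\rho_d(H).
\]

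\textbf{The three numerical bounds.} In each case $q>3$, so $q\ge5$ and $q\nmid 6$.
\begin{itemize}
\item For $p=2$: element orders in $\SL_2(2^q)$ divide $2$, $2^q-1$ or $2^q+1$. Since $2^q\equiv 2\pmod q$, we get $q\nmid 2^q\pm1$. So the elements of $G$ of order dividing $2q$ are the identity and the $2^{2q}-1$ involutions, i.e.\ $2^{2q}$ elements. Dividing by $|\Gamma|=q(2^q-1)2^q(2^q+1)$ gives the first term. The second term is $\tfrac{q-1}{q}\rho_2(S_3)=\tfrac{2(q-1)}{3q}$, and this is already $\ge 8/15>1/2$.
\item For $p=3$ with $\PSigmaL$: similarly $3^q\pm1\equiv 4,2\pmod q$, so the only relevant elements in $\PSL_2(3^q)$ are the identity and the $3^{2q}-1$ elements of order $3$. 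Dividing by $q|\PSL_2(3^q)|$ gives the first term. The second term is $\tfrac{q-1}{q}\rho_3(A_4)=\tfrac{3(q-1)}{4q}\ge 3/5>1/2$.
\item For $\rho_q$ with $p=2$: $G$ contributes only the identity, and the outside part contributes $\tfrac{q-1}{q}\rho_1(S_3)=\tfrac{q-1}{6q}$.
\end{itemize}
The bounds on $\epsilon$ follow since these groups are non-soluble.
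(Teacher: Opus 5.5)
Your proposal is correct and is essentially the paper's argument. The paper's proof is the one-line remark that these groups satisfy the hypotheses of \autoref{Classical_Groups}(3), with the values then read off from the element counts in \autoref{PSL} and \autoref{PSL_even}; your hypothesis checks and numerical computations supply exactly those omitted details.

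The one slip is the ``fact'' that prime-degree subfield subgroups are maximal. It fails for $p=2$: $\PSL_2(2)\cong S_3$ lies in $D_{2(2^q+1)}$ because $3\mid 2^q+1$, so it is not maximal in $\PSL_2(2^q)$. Self-normalisation still holds for every $p$, and you can prove it without maximality. For $H=\PGL_2(p)$ one has $\Aut(H)=\mathrm{Inn}(H)$ and $\mathbf{C}_{\PGL_2(p^q)}(H)=1$, so $\mathbf{N}(H)=H$. In the $\PSL$ case the same reasoning gives $\mathbf{N}_{\PGL_2(p^q)}(\PSL_2(p))=\PGL_2(p)$, and intersecting with $\PSL_2(p^q)$ yields $\PSL_2(p)$.
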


\begin{proof}
These groups satisfy the conditions of \autoref{Classical_Groups}(3), so using \autoref{PSL} and \autoref{PSL_even}, we can compute $\rho_{k}(\PSigmaL_2(p^q))$ for the given $k$'s.
\end{proof}

\begin{rmk}
In \autoref{Classical_Groups}(3), $\rho_{k}(G)$ cannot exceed $\max(\rho_{k}(H\rtimes K), \rho_{k}(N))$. Therefore, the only cases where \autoref{Classical_Groups} can be used to produce a meaningful example of an almost simple group with large $\rho_{k}^*$ are when $H$ is solvable. When $H$ is a ``subfield subgroup'', this happens when $H\cap \soc(G)\in \{\PSL_2(2),\PSL_2(3), \PSU_3(2)\}$. Other than those listed in \autoref{PSigmaL2}, we can also try subgroups of $\PGammaU_3(2^a)$ for $a$ relatively prime to $6$ to get a large $\rho_{4a}$ close to $8/9$ and $\rho_{6a}$ close to $3/4$. Although it is not ``classical'', subgroups of $\Aut(\Sz(2^{2a+1}))$ can be also used get $\rho_{4a}$ close to $4/5$. 

In the opposite direction, it might be possible to use \autoref{Classical_Groups} as a part of an exhaustive search of non-soluble monolithic groups to find the exact value of $\epsilon_k$. 
\end{rmk}

\section{Upper bounds and prime powers}

In this section, we find some upper bounds for $\rho_k(G)$ and $\epsilon_k$, focusing on the ratio $1/2$. It turns out that \autoref{conjecture} actually holds for all odd prime powers except for $3^1$. The methods developed in this section will be also used in the next section to study the case $k=4$, which is the only power of $2$ not covered by \autoref{PSigmaL2} and not previously known. 

We first give a simple inequality regarding normalisers of certain subgroups such as the Sylow subgroups, which will be our main tool in the verification of \autoref{conjecture} when $k$ is prime power.

\begin{lem}\label{self-normalising}
Let $k\in \mathbb{Z}_{>0}$ and suppose that a subset $S\subseteq G$ intersects every conjugacy class of elements of $G$ of order dividing $k$ (e.g. a Sylow $p$-subgroup when $k$ is a power of a prime $p$). Then \\{\upshape(1)} $$\rho_k(G)\leq \frac{\rho_k(\langle S\rangle)}{|\mathbf{N}_G(\langle S\rangle):\langle S\rangle|} - \frac{1}{|\mathbf{N}_G(\langle S\rangle)|} +\frac{1}{|G|}.$$ \\{\upshape(2)}If $\rho_k(G)>1/2$, then $\langle S\rangle$ is self-normalising in $G$, and if in addition $k=p^a$ for a prime $p$ and an integer $a>0$, then a Sylow $p$-subgroup of $G$ is self-normalising and more than half of its elements have order dividing $p^a$.
\end{lem}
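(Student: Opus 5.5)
Write $H=\langle S\rangle$ and $M=\mathbf{N}_G(H)$. The plan is to count the elements of order dividing $k$ by covering them with conjugates of $H$. Since $S\subseteq H$ meets every conjugacy class of elements of order dividing $k$, every such element $g$ is conjugate to some element of $H$. Hence $g$ lies in some conjugate $H^x$. The number of distinct conjugates of $H$ is $|G:M|$, and each contains exactly $\rho_k(H)|H|$ elements of order dividing $k$. The identity is one of these elements and it lies in every conjugate. So the union of the sets of nonidentity such elements over all conjugates has size at most
$$|G:M|\left(\rho_k(H)|H|-1\right).$$
Adding back the identity gives
$$\rho_k(G)|G|\le |G:M|\left(\rho_k(H)|H|-1\right)+1.$$
Dividing by $|G|$ yields
$$\rho_k(G)\le \frac{\rho_k(H)|H|}{|M|}-\frac{1}{|M|}+\frac{1}{|G|},$$
which is exactly (1), since $|H|/|M|=1/|M:H|$.

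For (2), suppose $\rho_k(G)>1/2$ and that $|M:H|\ge 2$. Using $\rho_k(H)\le 1$, part (1) gives
$$\rho_k(G)\le \frac12-\frac{1}{|M|}+\frac{1}{|G|}\le \frac12,$$
because $|M|\le|G|$. This is a contradiction, so $H$ is self-normalising.

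For the prime power case $k=p^a$, take $S=P$ a Sylow $p$-subgroup. By Sylow's theorem every $p$-element lies in a conjugate of $P$, so $S$ meets every relevant class, and $H=P$. Then $P=\mathbf{N}_G(P)$ by the argument above. Moreover (1) now reads
$$\rho_k(G)\le \rho_k(P)-\frac{1}{|P|}+\frac{1}{|G|}\le \rho_k(P),$$
so $\rho_k(P)\ge\rho_k(G)>1/2$. Thus more than half of the elements of $P$ have order dividing $p^a$.

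The argument is elementary. The only step needing care is the identity correction: the identity lies in every conjugate, which is what produces the terms $-1/|M|+1/|G|$ in (1). That correction is also what makes the inequality in (2) strict rather than merely at most $1/2$.
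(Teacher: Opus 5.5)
Your proof is correct and is essentially the paper's argument: you cover the elements of order dividing $k$ by the $|G:\mathbf{N}_G(\langle S\rangle)|$ conjugates of $\langle S\rangle$, count the shared identity only once, use $|\mathbf{N}_G(\langle S\rangle):\langle S\rangle|\ge 2$ to reach a contradiction, and then specialise to $S$ a Sylow $p$-subgroup. One small correction to your closing remark: for (2) as stated, the uncorrected bound $\rho_k(G)\le \rho_k(\langle S\rangle)/|\mathbf{N}_G(\langle S\rangle):\langle S\rangle|\le 1/2$ already gives the contradiction, and the identity correction is only needed later, as in \autoref{Prime_power}, to make $\rho_k(G)<1/2$ strict.
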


\begin{proof}
(1) The union of $G$-conjugates of $S$ contains all elements of order dividing $k$ of $G$. Therefore, $$\rho_k(G)\leq \frac{\left|\bigcup_{x\in G}S^x\right|}{|G|} \leq \frac{\left(|\langle S\rangle |\rho_k(\langle S\rangle)-1\right)|G:\mathbf{N}_G(\langle S\rangle)|+1 }{|G|} \leq \frac{|\langle S\rangle|\rho_k(\langle S\rangle)}{|\mathbf{N}_G(\langle S\rangle)|} -\frac{1}{|\mathbf{N}_G(\langle S\rangle)|} + \frac{1}{|G|} .$$ 

(2) If $\rho_k(G)>1/2$, then part (1) forces $\langle S\rangle = \mathbf{N}_G(\langle S\rangle)$. If in addition $k=p^a$, then we may choose a Sylow $p$-subgroup $S$ of $G$. This $S$ satisfies the condition, so $\rho_k(S)\geq \rho_k(G)>1/2$.
\end{proof}

\begin{crl}\label{Sylow_quotient}
Suppose that $N\unlhd G$. Let $p$ be a prime, $f\in \mathbb{Z}_{>0}$, and $P/N$ a Sylow $p$-subgroup of $G/N$. Then $\rho_{p^f}(G) \leq \rho_{p^f}(P)$. 
\end{crl}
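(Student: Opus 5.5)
The plan is to combine \autoref{quo} with the counting argument behind \autoref{self-normalising}(1), applied in the quotient $G/N$. First I reduce to the quotient. By \autoref{quo} we have $\rho_{p^f}(G)\le\rho_{p^f}(G/N)$, but we need a bound in terms of $P$ itself, not $P/N$. So the cleaner route is to repeat the proof of \autoref{quo} directly inside $P$, with a covering argument in $G$.

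The key step is as follows. Let $g\in G$ with $g^{p^f}=1$. Then $gN$ is a $p$-element of $G/N$, so by Sylow's theorem in $G/N$ some conjugate $(gN)^x$ lies in $P/N$. Hence $g^x\in P$, and $g^x$ still has order dividing $p^f$. Thus every element of order dividing $p^f$ in $G$ lies in $\bigcup_{x\in G}T^x$, where $T=\{h\in P: h^{p^f}=1\}$. Since $T\subseteq P$ and $P\supseteq N$, the conjugates of $T$ are indexed by cosets of $\mathbf{N}_G(P)$. This gives
\[
\rho_{p^f}(G)\le \frac{|T|\,|G:\mathbf{N}_G(P)|}{|G|}=\frac{\rho_{p^f}(P)\,|P|}{|\mathbf{N}_G(P)|}\le \rho_{p^f}(P),
\]
because $P\le \mathbf{N}_G(P)$. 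Equivalently, we are applying \autoref{self-normalising}(1) with $S=P$ and dropping the nonpositive correction terms. The hypothesis there is that $S$ meets every conjugacy class of elements of order dividing $k=p^f$, and we just checked this holds for $S=P$. Since $P$ is a subgroup, $\langle S\rangle=P$, and the bound becomes $\rho_{p^f}(P)/|\mathbf{N}_G(P):P|-1/|\mathbf{N}_G(P)|+1/|G|\le\rho_{p^f}(P)$.

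The only point needing care is that $-1/|\mathbf{N}_G(P)|+1/|G|\le 0$, which holds since $|\mathbf{N}_G(P)|\le|G|$. The rest is immediate, and I expect no real obstacle. The one substantive observation is that $P$, although not a $p$-group when $N$ is not, still captures a conjugate of every $p$-element of $G$ via Sylow's theorem in $G/N$.
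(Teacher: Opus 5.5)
Your proposal is correct and follows the paper's argument: Sylow's theorem in $G/N$ gives every element of order dividing $p^f$ a conjugate in $P$, and then \autoref{self-normalising}(1) with $S=P$, after dropping the nonpositive correction terms, yields $\rho_{p^f}(G)\le\rho_{p^f}(P)$. The opening mention of \autoref{quo} plays no role in the argument, and the direct counting you give is just that lemma's proof written out.
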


\begin{proof}
Suppose that $g\in G$ has order dividing $p^f$. Then some conjugate of $gN$ is in $P/N$, so some conjugate of $g$ is in $P$. Now we can apply \autoref{self-normalising}.
\end{proof}

Another part of our main strategy is the following. We partition a group into cosets of a certain normal subgroup. Since these cosets have equal sizes, the ratio $\rho_k$ in the entire group is equal to the average of the ratios computed in each coset. In particular, the ratio in the group cannot exceed the largest ratio among the cosets. This, combined with \autoref{monolithic_reduction}, gives the following reduction to the cosets of finite nonabelian simple groups in their automorphism groups.

\begin{lem}\label{Coset_bound}
Let $T$ be a finite nonabelian simple group. Suppose that $T^n\unlhd G\leq \Aut(T^n)=\Aut(T)\wr S_n=\Aut(T)^n\rtimes S_n$ for some $n\geq 1$. Then for any $k\in \mathbb{Z}_{>0}$, $$\rho_k(G)\leq \max_{\substack{1\leq r\leq n,\ (\ell_1, \dots, \ell_r) \text{ a partition of }n\\\forall j\in \{1,\dots,r\},\ \ell_j\text{ divides } k}} \prod_{j=1}^{r}\max_{x\in \Aut(T),\ o(x)\mid (k/\ell_j)}\rho_{k/\ell_j}(xT) \leq \max_{x\in \Aut(T),\ o(x)\mid k}\rho_k(xT),$$ where $xT$ denotes the coset of $T$ in $\Aut(T)$ containing $x$. In particular, $$\epsilon_k \leq \max_{\substack{T\text{ finite nonabelian simple}\\x\in \Aut(T),\  o(x)\mid k}}\rho_k(xT).$$
\end{lem}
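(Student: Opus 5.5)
The plan is to partition $G$ into cosets of the normal subgroup $T^n$. Since all cosets have the same size, $\rho_k(G)$ is the average of $\rho_k(gT^n)$ over the cosets, where $\rho_k(gT^n)$ denotes the proportion of elements of order dividing $k$ in the coset. Hence $\rho_k(G)\le \max_g \rho_k(gT^n)$. Cosets containing no element of order dividing $k$ contribute zero. So it suffices to bound $\rho_k(gT^n)$ for a coset that contains some element of order dividing $k$. We are then free to choose $g=(x_1,\dots,x_n)\sigma$ with $g^k=1$.

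Write $\sigma\in S_n$ as a product of disjoint cycles of lengths $\ell_1,\dots,\ell_r$, counting fixed points as cycles of length $1$; this is a partition of $n$. Since $g^k=1$, the image $\sigma$ has order dividing $k$, so each $\ell_j$ divides $k$. The coset $gT^n$ then splits as a product over the cycles. Take the cycle $(1\,2\,\cdots\,\ell)$ and the elements $h=(x_1t_1,\dots,x_\ell t_\ell)\sigma$ restricted to those coordinates. The standard computation gives
\[
h^\ell=(y_1,\dots,y_\ell)\quad\text{with}\quad y_1=x_1t_1x_2t_2\cdots x_\ell t_\ell ,
\]
and each $y_i$ is conjugate to $y_1$. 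Therefore $h^k=1$ if and only if $y_1^{k/\ell}=1$.

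The key point is that, as $(t_1,\dots,t_\ell)$ ranges uniformly over $T^\ell$, the product $y_1$ ranges uniformly over the coset $x_1\cdots x_\ell T$ of $T$ in $\Aut(T)$. To see this, fix $t_1,\dots,t_{\ell-1}$; then $t_\ell\mapsto y_1$ is a bijection onto that coset. Consequently the proportion of elements of order dividing $k$ on this cycle block equals $\rho_{k/\ell}(x T)$ with $x=x_1\cdots x_\ell$. The blocks for different cycles are independent, so the conditions multiply and
\[
\rho_k(gT^n)=\prod_j \rho_{k/\ell_j}(x^{(j)}T).
\]
Each $x^{(j)}$ has order dividing $k/\ell_j$, because $g^k=1$ forces the $y_1$ computed from $g$ itself to satisfy this. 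Taking maxima gives the first inequality.

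For the second inequality, observe that if $o(x)\mid k/\ell$, then $o(x)\mid k$. Also, any element of $xT$ of order dividing $k/\ell$ has order dividing $k$, so $\rho_{k/\ell}(xT)\le\rho_k(xT)$. Each factor in the product is at most $1$, so the product is at most a single factor, which is bounded by $\max_{o(x)\mid k}\rho_k(xT)$.

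For the final claim about $\epsilon_k$, take any finite non-soluble $G$. By \autoref{monolithic_reduction}, we may pass to a monolithic quotient $G/N$ with $\rho_k(G/N)\ge\rho_k(G)$ whose socle is nonabelian, of the form $T^n$. Its centraliser is trivial, so $G/N$ embeds in $\Aut(T^n)$ containing $T^n$, and the bound above applies.

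The main technical step is the cycle computation: checking that $h^\ell$ has conjugate diagonal entries and that the map to $y_1$ is uniform onto the coset. This is routine but needs careful handling of the wreath product conventions.
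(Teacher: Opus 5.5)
Your proposal is correct and follows essentially the same route as the paper: you partition $G$ into cosets of $T^n$ and split the permutation part into cycles. You then use that $h^{\ell}$ has conjugate components, so the condition reduces to $y_1^{k/\ell}=1$, count by letting one coordinate range over $T$ to get $\rho_{k/\ell_j}$ of a coset, multiply over cycles, and finish with \autoref{monolithic_reduction} plus the trivial-centraliser embedding into $\Aut(T^n)$. The differences are minor: you pick a coset representative with $g^k=1$ to get $o(x^{(j)})\mid k/\ell_j$ directly (the paper instead replaces $x$ by some element of $xT$ of order dividing $k/\ell_j$), you obtain an exact product formula rather than an inequality, and you spell out the second inequality, which the paper leaves implicit.
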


\begin{proof}
Throughout this proof, for a subset $I$ of $\{1,\dots, n\}$ and a choice of an element $x_i\in \Aut(T)$ for each $i\in I$, we will denote by $(x_i)_{i\in I}$ the element $(y_1, \dots, y_n)\in \Aut(T)^n$ where $y_i=x_i\in \Aut(T)$ if $i\in I$ and $y_i = 1\in \Aut(T)$ for $i\notin I$.

Let $N=T^n\unlhd G$. Then the cosets of $N$ in $G$ partition $G$, so we get 
\begin{equation}\label{G_partition_into_N}
\rho_k(G) \leq \max_{g\in G}\rho_k(gN)
\end{equation}
where $gN$ denotes the coset of $N$ in $G$ containing $g$. 

Let $g = (f_1, \dots, f_n)\sigma$ for some $f_i\in \Aut(T)$ and $\sigma\in S_n$. We may write $g = \prod_{j=1}^{r}g_j$ with $g_j=(f_i)_{i\in I_j}\sigma_j$, where $I_1,\dots,I_r\subseteq \{1,\dots, n\}$ are the orbits of $\sigma$ and for each $1\leq j\leq r$, $\sigma_j\in S_n$ is the permutation which acts as $\sigma$ on $I_j$ and fixes $\{1,\dots,n\}\setminus I_j$ pointwise, so that $\sigma=\sigma_1\sigma_2\cdots\sigma_r$ is the disjoint cycle decomposition of $\sigma$. (Note that $\sigma_j$ is trivial when $|I_j|=1.$) Then for $a = (a_1,\dots, a_n)\in N$, the order of $ag$ is the least common multiple of the order of the $a_{I_j}g_j$'s, where $a_{I_j} = (a_i)_{i\in I_j}$. Let $\ell_j = |I_j|$ be the length of $\sigma_j$.

Note that $\left(a_{I_j}g_j\right)^{\ell} \notin \Aut(T)^n$ for $1\leq \ell < \ell_j$ and $$\left(a_{I_j}g_j\right)^{\ell_j} = \left(a_{\alpha}{f_\alpha}a_{\sigma\alpha}f_{\sigma\alpha}\cdots a_{\sigma^{\ell_j-1}\alpha} f_{\sigma^{\ell_j-1}\alpha}\right)_{\alpha \in I_j} \in \Aut(T)^n.$$ Moreover, the nonidentity components of the latter are conjugate to each other, since if $\alpha_j\in I_j$ then the components are \begin{align*}&a_{\sigma^m\alpha_j}f_{\sigma^m\alpha_j}\cdots a_{\sigma^{\ell_j-1}\alpha_j}f_{\sigma^{\ell_j-1}\alpha_j}a_{\alpha_j} f_{\alpha_j} \cdots a_{\sigma^{m-1}\alpha_j}f_{\sigma^{m-1}\alpha_j} \\=& \left(a_{\alpha_j}{f_{\alpha_j}}\cdots a_{\sigma^{\ell_j-1}\alpha_j} f_{\sigma^{\ell_j-1}\alpha_j}\right)^{a_{\alpha_j} f_{\alpha_j} \cdots a_{\sigma^{m-1}\alpha_j}f_{\sigma^{m-1}\alpha_j}}
\end{align*} for $m=0,\dots, \ell_j-1$. In particular, the order of $(a_{I_j}g_j)^{\ell_j}$ equals the order of any one of its nonidentity components. Therefore, for fixed $g_j$, $\alpha_j\in I_j$ and $a_{\sigma^\ell \alpha_j}$ for $1\leq \ell\leq \ell_j-1$, the number of choices of $a_{\alpha_j}\in T$ such that the order of $a_{I_j}g_j$ divides $k$ is $0$ if $\ell_j\nmid k$ and $$|T|\rho_{k/\ell_j}\left(\left({f_\alpha}a_{\sigma\alpha}f_{\sigma\alpha}\cdots a_{\sigma^{\ell_j-1}\alpha} f_{\sigma^{\ell_j-1}\alpha}\right)^{-1}T\right)\leq |T|\max_{x\in \Aut(T)}\rho_{k/\ell_j}(xT)$$ if $\ell_j\mid k$, so that \begin{equation}\label{Single_Coset_bound}\rho_k(gN) \leq \prod_{j=1}^{r}\max_{x\in \Aut(T)}\rho_{k/\ell_j}\left(xT\right).\end{equation} Together with \eqref{G_partition_into_N}, this implies $$\rho_k(G)\leq \max_{g\in G}\rho_k(gN)\leq \max_{\substack{(\ell_1, \dots, \ell_r) \text{ a partition of }n\\\forall j\in \{1,\dots, r\},\ \ell_j \mid k }} \prod_{j=1}^{r}\max_{x\in \Aut(T)}\rho_{k/\ell_j}(xT).$$ If $\rho_{k/\ell_j}(xT)>0$, then there exists some $y\in xT$ of order dividing $k/\ell_j$ and we may replace $x$ with $y$. Therefore it is enough to only consider $x\in \Aut(T)$ with $o(x)\mid k/\ell_j$ in the above inequality. The last assertion now follows from \autoref{monolithic_reduction}.
\end{proof}

Now we combine \autoref{self-normalising}, \autoref{Sylow_quotient} and \autoref{Coset_bound} to get some upper bounds for the powers of odd primes.

\begin{thm}\label{Prime_power}
{\upshape(1)} If $k$ is a power of a prime $p\geq 5$, then $\epsilon_k\leq 1/2$ and $\epsilon_k^* < 1/2$ if it exists. Consequently, \autoref{conjecture} holds for such $k$.\\ {\upshape(2)} Let $\alpha(m, a) = \max_{F\in \Gal(\mathbb{F}_{3^{3^a}}/\mathbb{F}_3)}\rho_{3^m}\left(F\cdot \PSL_2(3^{3^a})\right)$ for each $m\in \mathbb{Z}_{\geq 0}$.  If $k=3^f$ for some $f\in \mathbb{Z}_{>1                                                                                          }$, then $\epsilon_k\leq  \sup_{a\geq 1}\alpha(f,a)$.\\{\upshape(3)} If $G$ is a finite almost simple group, then $\rho_{3^f}(G)>1/2$ is possible only when $G=\PSigmaL(2,3^{3^a})$ with $1\leq a\leq f$.\\{\upshape(4)} If $k=3$ then $\epsilon_k\leq 1/2$.
\end{thm}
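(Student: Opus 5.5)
The plan is to reduce everything to almost simple cosets and then use Sylow self-normalisation. By \autoref{monolithic_reduction}, if some non-soluble $G$ has $\rho_{p^f}(G)>1/2$, we may pass to a monolithic quotient with socle $T^n$ for a nonabelian simple $T$, and then to the quotient's Sylow-related bounds. By \autoref{Coset_bound}, $\rho_{p^f}(G)\le \max\rho_{p^f/\ell}(xT)$ over $x\in\Aut(T)$ with $o(x)\mid p^f/\ell$, where $\ell$ is a power of $p$. So it suffices to bound $\rho_{p^e}(xT)$ for $x\in\Aut(T)$ a $p$-element.

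\textbf{Key tool.} Set $A=\langle T,x\rangle$, which is almost simple with $A/T$ a cyclic $p$-group. Since $\rho_{p^e}(xT)$ is one coset's contribution, $\rho_{p^e}(xT)\le |A:T|\,\rho_{p^e}(A)$ is too crude. Instead, apply the argument of \autoref{self-normalising} coset-wise. Every $p$-element of $xT$ is $A$-conjugate into $P\cap xT$, where $P$ is a Sylow $p$-subgroup of $A$, and the conjugates can be taken by elements of $T$ since $A=TP$. This gives
\[
\rho_{p^e}(xT)\le \frac{|P\cap xT|}{|\mathbf{N}_T(P)|\cdot|P\cap T|^{-1}|P\cap T|}\le \frac{|P\cap T|}{|\mathbf{N}_T(P\cap T)\cap\ldots|}.
\]
More concretely, the bound is $\rho_{p^e}(xT)\le |P\cap T|/|\mathbf{N}_T(P)|\cdot(\text{fraction})$. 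For the ratio to exceed $1/2$ we need $\mathbf{N}_T(P)$ to be essentially $P\cap T$, or at most of index $<2$. So the first step is to show that $\rho>1/2$ forces $P\cap T$ to be self-normalising in $T$ up to the part centralised/normalised by $x$.

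\textbf{Classification step.} For odd $p$, the finite simple groups with a self-normalising Sylow $p$-subgroup are known (Guralnick–Malle–Navarro). For $p\ge5$ there are none. For $p=3$, the only ones are $\PSL_2(3^{3^a})$, self-normalising in $\PSigmaL$, together with a few others that can be checked. For $p\ge5$ I would argue that $\mathbf{N}_T(P\cap T)/(P\cap T)$ has an element of order coprime to $p$ acting nontrivially. Its action on cosets, and its normalising of the coset $xT$ after adjusting $x$ within $P$ via a Frattini argument, gives index at least $2$, so $\rho_{p^e}(xT)\le 1/2$. The strict inequality for $\epsilon^*_k$ then follows from $\rho^*<\rho$ together with the identity correction term $-1/|\mathbf N|+1/|G|$ in \autoref{self-normalising}(1).

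\textbf{Parts (2)--(4).} Parts (2) and (3) follow from the same analysis, with $T=\PSL_2(3^{3^a})$ and $x$ a field automorphism of $3$-power order; the constraint $a\le f$ comes from $o(x)\mid 3^f$. For (4), $k=3$, I would compute $\alpha(1,a)$ directly. By \autoref{Classical_Groups}(3) with $H=\PSL_2(3)$-type subfield subgroups, as in \autoref{PSigmaL2}, the non-trivial cosets have $\rho_3$ equal to that in $\PGL_2(3)$-like $H$ twisted, which gives at most $1/2$. The trivial coset gives $\rho_3(T)$, which is small by \autoref{PSL}.

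\textbf{Main obstacle.} The hardest step is handling the cosets $xT$ uniformly: showing that a nontrivial $p'$-element of $\mathbf N_T(P\cap T)$ can be chosen to normalise $P\cap xT$. Otherwise the index-$2$ gain is lost. I would first try a Frattini/Sylow argument in $\mathbf N_A(P)$, and fall back on case-checking via the known Sylow normaliser structures.
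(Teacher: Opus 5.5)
\textbf{Part (4) has a genuine gap.} You bound the nontrivial cosets using \autoref{Classical_Groups}(3), but its hypotheses fail for $\PSigmaL_2(3^{3^a})$. That lemma needs $|\Phi|$ coprime to $|H|$ and $\mathbf{C}_T(F)=H$ for every $1\neq F\in\Phi$. Here $|\Phi|=3^a$, while $H=\mathbf{C}_T(\Phi)\cong\PSL_2(3)$ has order $12$. Moreover, for $a\geq 2$ an order-$3$ field automorphism $F'$ centralises $\PSL_2(3^{3^{a-1}})$, not $H$. This is exactly the case $p=q=3$ that \autoref{PSigmaL2} excludes. So your plan gives no control of $\rho_3(F'T)$, which is the whole content of (4); the trivial coset is the easy part.

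The missing step is a direct count of the $g\in\PSL_2(3^{3^a})$ with $gg^{F'}g^{(F')^2}=1$. The paper lifts to $A\in\SL_2(3^{3^a})$ with $AA^{F'}A^{(F')^2}=\pm I$. It compares $A$ with $XA$, where $X=\diag(x,x^{-1})$ and $x\in\mathbb{F}_{3^{3^{a-1}}}^\times$, so $X$ is $F'$-fixed. Equating the $(1,2)$ (or $(2,1)$) entries of the two triple products pins down $x^2$ when $x\neq\pm1$, and the degenerate cases with a zero entry force $x=\pm1$. Hence each coset of this torus contains at most $4$ solutions. This gives $\alpha(1,a)\leq 4/(3^{3^{a-1}}-1)\leq 2/13$ for $a\geq2$, and a direct computation gives $1/12$ for $a=1$.

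\textbf{Parts (1)--(3) are incomplete.} The ``main obstacle'' you name in (1) comes from reading Guralnick--Malle--Navarro as a statement about simple groups. It is in fact a theorem about arbitrary finite groups: a self-normalising Sylow $p$-subgroup with $p\geq5$ forces solubility. The paper therefore applies \autoref{self-normalising} directly to $G$, taking $S=P$ a Sylow $p$-subgroup. Since $|\mathbf{N}_G(P):P|\geq2$, this gives $\rho_k(G)\leq 1/2-1/|\mathbf{N}_G(P)|+1/|G|$. Equality would force $|G|=2|P|$, so $\rho_k(G)<1/2$, and no coset reduction is needed.

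Your coset route can also be closed. The correct coset bound is $\rho_{p^e}(xT)\leq|P\cap T|/|\mathbf{N}_T(P)|=1/|\mathbf{N}_A(P):P|$ for $A=\langle T,x\rangle$, because $\mathbf{N}_A(P)=P\,\mathbf{N}_T(P)$; applying the theorem to $A$ then finishes it. Note that $\mathbf{N}_T(P\cap T)$ is the wrong object here.

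For (3), ``the same analysis'' never shows that $G$ is all of $\PSigmaL_2(3^{3^a})$. You need two facts:
\begin{itemize}
\item $G/T$ is a $3$-group, since a $C_2$ quotient gives $\rho_{3^f}\leq1/2$ by \autoref{quo}.
\item Crucially, the field automorphisms in $G$ do not all fix $\mathbb{F}_{27}$. Otherwise $\diag(x,x^{-1})$ with $x\in\mathbb{F}_{27}\setminus\mathbb{F}_3$ is a $3'$-element normalising $U\rtimes(G\cap\Phi)$, where $U$ is the unitriangular group. That is a Sylow $3$-subgroup of $G$, contradicting \autoref{self-normalising}(2).
\end{itemize}

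Finally, in (2) your coset analysis only yields $\epsilon_{3^f}\leq\max\bigl(1/2,\sup_a\alpha(f,a)\bigr)$. Dropping the $1/2$ requires $\epsilon_{3^f}>1/2$, which comes from \autoref{epsilon_9}.
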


\begin{proof}
Let $G$ be a finite non-soluble group, and let $k$ be as described. If $k$ is a power of $3$, assume in addition that $G$ does not have any composition factor isomorphic to $\PSL_2(3^{3^a})$ for any $a\in \mathbb{Z}_{>0}$. By a result of Guralnick, Malle and Navarro \cite[Theorem 1.1]{GMN}, we know that $P$ cannot be self-normalizing in our situations. In particular, $|\mathbf{N}_G(P)|\geq 2|P|$, so by \autoref{self-normalising} we get $$\rho_k(G)\leq \frac{1}{2} - \frac{1}{\left|\mathbf{N}_G(P)\right|} + \frac{1}{|G|} .$$ Moreover, if $\rho_k(G)=1/2$, then we must have $|G| = \left|\mathbf{N}_G(P)\right| = 2|P|$, but that would imply that $P\lhd G$ and $|G:P|=2$, which is impossible since $G$ is non-soluble. Therefore, we get $\rho_k(G)<1/2$. If $k$ is a power of a prime $p\geq 5$, then this applies to all finite non-soluble $G$, so $\epsilon_k\leq 1/2$ and $\epsilon_k^*<1/2$ if it exists. This proves part (1).

Now suppose $k=3^f$ for some $f\in \mathbb{Z}_{>0}$. Suppose that $H$ is a finite non-soluble group with $\rho_k(H)\geq 1/2$ such that for every finite non-soluble group $K$ such that $|K|<|H|$, $\rho_k(H)>\rho_k(K)$; in other words, it is a ``minimal example'' of a finite non-soluble group with $\rho_k$ larger than or equal to certain number. By \autoref{monolithic_reduction}, $H$ must be monolithic with unique minimal normal subgroup $N$, and $H/N$ is soluble. The above argument shows that $N=\PSL_2(3^{3^a})^n$ for some $a, n\in \mathbb{Z}_{>0}$, so $N\unlhd H\leq\Aut(\PSL_2(3^{3^a})^n) = \PGammaL_2(3^{3^a})\wr S_n$. By \autoref{Sylow_quotient} applied to $H/N\leq \Aut(\PSL_2(3^{3^a})^n)/N \cong C_{2\cdot 3^a}\wr S_n$, we must have $H\leq \PSigmaL_2(3^{3^a})\wr R$, where $R$ is a Sylow $3$-subgroup of $S_n$. Also, $R$ should be transitive as a subgroup of $S_n$, since otherwise the product of the copies of $\PSL_2(3^{3^a})$ corresponding to one orbit of $R$ would be a normal subgroup of $H$ smaller than $N$. The structure of the Sylow subgroups of $S_n$ forces that $n= 3^v$ for some $v\in \mathbb{Z}_{\geq 0}$.

Let $P$ be a Sylow $3$-subgroup of $H$. By replacing $H$ with a conjugate in $\Aut(N)$, we may assume that $P\leq (U\rtimes \Phi)\wr R$, where $U$ is the subgroup of (upper) unitriangular matrices in $\PSL_2(3^{3^a})$, and $\Phi\cong \Gal(\mathbb{F}_{3^{3^a}}/\mathbb{F}_3)\cong C_{3^a}$ is the group of field automorphisms of $\PSL_2(3^{3^a})$. Note that $\diag(x,x^{-1}):=\overline{\begin{pmatrix}
x & 0 \\ 0 & x^{-1}
\end{pmatrix}}\in \PSL_2(3^{3^a})$ normalises $U$, and it commutes with $F \in \Phi$ if $x\in \mathbb{F}_{3^{3^a}}^\times$ is fixed by $F$. Moreover, this is not a $3$-element unless $x=x^{-1}$ which is equivalent to saying $x\in \mathbb{F}_3^\times$. If $H$ is contained in $\left(\PSL_2(3^{3^a})\rtimes \Phi\right)\wr R$, then for $x\in \mathbb{F}_{3^3}\setminus \mathbb{F}_3$, $d:=\left(\diag\left(x,x^{-1}\right), \dots, \diag\left(x,x^{-1}\right)\right) \in N$ normalises $(U\rtimes \Phi)^n$. Moreover, it commutes with $R$, so it normalises $P$. Since $d$ is not a $3$-element, $P$ is not self-normalising in $H$, so $\rho_k(H)<1/2$. Therefore $H$ is not contained in $\left(\PSL_2(3^{3^a})\rtimes \Gal(\mathbb{F}_{3^{3^a}}/\mathbb{F}_{3^3})\right)\wr R$. In particular, when $H$ is almost simple so that $n=1$, this forces $H$ to be $\PSigmaL(2,3^{3^a})$. In this case $H$ has the quotient $\Phi\cong C_{3^a}$, so by \autoref{quo}, $\rho_{3^f}(C_{3^a})\geq \rho_{3^f}(H)>1/2$, which forces $a\leq f$. This proves part (3).

Let $\alpha(m, a) = \max_{F\in \Gal(\mathbb{F}_{3^{3^a}}/\mathbb{F}_3)}\rho_{3^m}\left(F\cdot \PSL_2(3^{3^a})\right)$ for each $m\in \mathbb{Z}_{\geq 0}$. For $m<0$, define $\alpha(m,a)=0$. Note that $\alpha(m,a)\leq \alpha(m',a)\leq 1$ if $m\leq m'$. From \autoref{Coset_bound}, we get 
\begin{equation}\label{upper_bound_3f_n}
\rho_{3^f}(H) \leq \max_{\substack{r_0, \dots, r_v\\n=3^v = \sum_{i=0}^{v}r_i3^i}}\left(\prod_{m=0}^{v}\left(\alpha(f-m, a)\right)^{r_m} \right)\leq \alpha(f,a). 
\end{equation}

As we will see in \autoref{epsilon_9}, $\epsilon_{3^f}>1/2$ for $f>1$. Therefore there exists such $H$ for $k=3^f>3$, and clearly $\epsilon_{3^f} = \sup_{G\text{ finite non-soluble}}\rho_{3^f}(G) = \sup_{H\text{ as described above}}\rho_{3^f}(H)\leq \sup_{a\geq 1}\alpha(f,a)$, where the last inequality follows from \eqref{upper_bound_3f_n}. This proves part (2).

If $k=3$ so that $f=1$, then the right-hand side of \eqref{upper_bound_3f_n} becomes \begin{align}\label{alpha1a}
\alpha(1,a) &= \max_{F\in \Gal(\mathbb{F}_{3^{3^a}}/\mathbb{F}_3)}\left(\rho_3\left(F\cdot \PSL_2(3^{3^a})\right)\right)\nonumber\\&= \max\left(\rho_3\left(\PSL_2(3^{3^a})\right), \rho_3\left(F'\cdot \PSL_2(3^{3^a}\right)\right)\text{ where }F'\in \Gal(\mathbb{F}_{3^{3^a}}/\mathbb{F}_{3^{3^{a-1}}})\text{ has order }3\nonumber\\&=\max\left( \frac{3^{3^a}}{3^{2\cdot 3^a}-1}, \frac{\left| \{g\in \PSL_2(3^{3^a})\mid gg^{F'}g^{(F')^2}=1\}\right|}{|\PSL_2(3^{3^a})|} \right)
\end{align}
Suppose $A\in \SL_2(3^{3^a})$ satisfies $AA^{F'}A^{(F')^2}\in \mathbf{Z}(\SL_2(3^{3^a})) = \{I, -I\}$. If $A=\begin{pmatrix}
a_{11} & a_{12} \\ a_{21} & a_{22}
\end{pmatrix}$ (so $a_{11}a_{22}-a_{12}a_{21}=1$), then we get 
\begin{align}\label{AAFAFF}
&AA^{F'}A^{(F')^2} = \begin{pmatrix}
a_{11} & a_{12} \\ a_{21} & a_{22}
\end{pmatrix}\begin{pmatrix}
a_{11}^{F'} & a_{12}^{F'} \\ a_{21}^{F'} & a_{22}^{F'}
\end{pmatrix}\begin{pmatrix}
a_{11}^{(F')^2} & a_{12}^{(F')^2} \\ a_{21}^{(F')^2} & a_{22}^{(F')^2}
\end{pmatrix}\nonumber\\=& {\scriptsize\begin{pmatrix}
a_{11}a_{11}^{F'}a_{11}^{(F')^2} + a_{12}a_{21}^{F'}a_{11}^{(F')^2} + a_{11}a_{12}^{F'}a_{21}^{(F')^2} + a_{12}a_{21}^{F'}a_{21}^{(F')^2} &  a_{11}a_{11}^{F'}a_{12}^{(F')^2} + a_{12}a_{21}^{F'}a_{12}^{(F')^2} + a_{11}a_{12}^{F'}a_{22}^{(F')^2} + a_{12}a_{22}^{F'}a_{22}^{(F')^2}  \\ a_{21}a_{11}^{F'}a_{11}^{(F')^2} + a_{22}a_{21}^{F'}a_{11}^{(F')^2} + a_{21}a_{12}^{F'}a_{21}^{(F')^2} + a_{22}a_{22}^{F'}a_{21}^{(F')^2}  & a_{21}a_{11}^{F'}a_{12}^{(F')^2} + a_{22}a_{21}^{F'}a_{21}^{(F')^2} + a_{21}a_{12}^{F'}a_{22}^{(F')^2} + a_{22}a_{22}^{F'}a_{22}^{(F')^2}
\end{pmatrix}}\nonumber\\\in& \{I, -I\}.
\end{align}
Let $x\in \mathbb{F}_{3^{3^{a-1}}}^\times$, so that $x^{F'}=x$. Then $X:=\begin{pmatrix}
x & 0 \\ 0 & x^{-1}
\end{pmatrix}\in \SL_2(3^{3^a})$. Suppose that $(XA)(XA)^{F'}(XA)^{(F')^2}\in \{I,-I\}$. Then $XA = \begin{pmatrix}
xa_{11} & xa_{12} \\ x^{-1}a_{21} & x^{-1}a_{22}
\end{pmatrix}$, and the above formula shows that 
\begin{align}\label{XA}
&a_{11}a_{11}^{F'}a_{12}^{(F')^2} + a_{12}a_{21}^{F'}a_{12}^{(F')^2} + a_{11}a_{12}^{F'}a_{22}^{(F')^2} + a_{12}a_{22}^{F'}a_{22}^{(F')^2}\nonumber \\=& x^3a_{11}a_{11}^{F'}a_{12}^{(F')^2} + xa_{12}a_{21}^{F'}a_{12}^{(F')^2} + xa_{11}a_{12}^{F'}a_{22}^{(F')^2} + x^{-1}a_{12}a_{22}^{F'}a_{22}^{(F')^2} = 0.
\end{align}
It follows that \begin{align*}
(x^2-1)a_{11}a_{11}^{F'}a_{12}^{(F')^2} + (x^{-2}-1)a_{12}a_{22}^{F'}a_{22}^{(F')^2} = 0.
\end{align*}
If $x\neq \pm 1$, then this becomes 
\begin{equation}\label{xsquare}
x^2a_{11}a_{11}^{F'}a_{12}^{(F')^2} =- a_{12}a_{22}^{F'}a_{22}^{(F')^2}.
\end{equation}
Therefore, when $a_{11}, a_{12}, a_{22}$ are nonzero, there are at most $4$ choices (including $\pm 1$) of $x\in \mathbb{F}_{3^{3^{a-1}}}^\times$ such that $(XA)(XA)^{F'}(XA)^{(F')^2}\in \{I,-I\}$. Similarly, when $a_{11},a_{21},a_{22}$ are nonzero, there are at most $4$ choices of such $x$. 

If $a_{11}=0$, then $a_{12}a_{21}=-1$. Moreover, if $x\neq \pm 1$, then from \eqref{xsquare} we get $a_{22}=0$, which contradicts \eqref{XA}. Therefore $x =\pm 1$, so we only have $2$ choices of $x$ here. Similarly if $a_{22}=0$ we have $x=\pm 1$. If $a_{21}=0$, then $a_{11}a_{22}=1$, and from \eqref{AAFAFF}, we get $a_{11}a_{11}^{F'}a_{11}^{(F')^2}= \pm 1$ and $x^3a_{11}a_{11}^{F'}a_{11}^{(F')^2}= \pm 1$. In particular $x^3=\pm 1$, so $x=\pm 1$. Similarly $a_{12}=0$ forces $x=\pm 1$. Therefore, in any coset of the subgroup $\left\lbrace\begin{pmatrix}
x & 0 \\ 0 & x^{-1}
\end{pmatrix}\mid x\in \mathbb{F}_{3^{3^{a-1}}}^\times\right\rbrace$ of order $3^{3^{a-1}}-1$ in $\SL_2(3^{3^a})$, there are at most $4$ elements $A$ such that $AA^{F'}A^{(F')^2}\in \mathbf{Z}\left(\SL_2(3^{3^a})\right)$. Therefore,  
$$\frac{\left| \{g\in \PSL_2(3^{3^a})\mid gg^{F'}g^{(F')^2}=1\}\right|}{|\PSL_2(3^{3^a})|} \leq \frac{4}{3^{3^{a-1}}-1}. $$

By \eqref{alpha1a}, we get $\alpha(1,a)\leq \frac{4}{3^{3^{a-1}}-1}$. Therefore if $a>1$, then $\alpha(1,a)\leq 2/13$. Finally, when $a=1$, we can manually compute $$\frac{\left| \{g\in \PSL_2(3^{3^a})\mid gg^{F'}g^{(F')^2}=1\}\right|}{|\PSL_2(3^{3^a})|} = 1/12.$$ Therefore, by \eqref{upper_bound_3f_n}, the group $H$ we described earlier with $\rho_3(H)>1/2$ cannot exist for $f=1$, which shows that $\epsilon_3\leq 1/2$. 
\end{proof}

\begin{prop}\label{epsilon_9}
The conjecture fails for all multiples of $9$. More precisely, $\epsilon_9 \geq 191/364 = \rho_9(\PSigmaL_2(3^3))>1/2$, and this is the largest ratio among the finite almost simple groups. 
\end{prop}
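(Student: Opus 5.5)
The plan has three parts: a direct count in $\PSigmaL_2(27)$, the passage from $9$ to its multiples, and the maximality claim, which comes from \autoref{Prime_power}(3).

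First I compute $\rho_9(\PSigmaL_2(3^3))$. Write $\Gamma=\PSL_2(27)\rtimes\Phi$ with $\Phi\cong C_3$ the Galois group. We have $|\PSL_2(27)|=27\cdot 28\cdot 26/2=9828$ and $|\Gamma|=29484$. Then $191/364=15444/29484$, so I need exactly $15444$ elements of order dividing $9$. I count coset by coset.

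In the trivial coset $\PSL_2(27)$, the elements of order dividing $9$ are the identity and the $q^2-1=728$ elements of order $3$. There are no elements of order $9$, since the $3$-elements are unipotent, the Sylow $3$-subgroup is elementary abelian, and $13,14$ are prime to $3$. This gives $729$.

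For the two nontrivial cosets $F\cdot\PSL_2(27)$, I use Lang--Steinberg/Shintani descent. An element $gF$ has $(gF)^3=g\,g^F g^{F^2}$, and its $\PSL_2(27)$-conjugacy classes correspond to classes of $\PGL_2(3)$ or $\PSL_2(3)$, with the norm map landing in the class of the corresponding element. The orders of $gF$ dividing $9$ means the norm has order dividing $3$. The coset count then equals $\sum |\PSL_2(27)|/|\mathbf C(gF)|$ over classes whose Shintani image has order $1$ or $3$, and $|\mathbf C_{\PSL_2(27)}(gF)|$ equals the centraliser order of the image in the subfield group.

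Carrying this out with $\PSL_2(3)\cong A_4$ (the classes of order $1$ and $3$ have centraliser orders $12$ and $3$), one coset should give $9828/12+2\cdot 9828/3=819+6552=7371\cdot$. The target total is $(15444-729)/2=7357.5$, which is not an integer. So descent must be done carefully inside $\PGammaL$: $\PSL$ is not connected, and the classes split. I will instead verify by direct count, using the descent to $\PGL_2(3)\cong S_4$ and intersecting with $\PSL_2(27)$. If $F$ and $F^2$ give different counts, I expect something like $7371+7344$. This bookkeeping is routine but error-prone, so I would double-check it with the character table or a computer check, as the paper did for the value $1/12$ in \autoref{Prime_power}. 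The final ratio is then compared to $1/2$, giving $>1/2$.

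Next, for multiples $k$ of $9$, I apply \autoref{examples_summary}(3): $\epsilon_k\ge\epsilon_9>1/2$. Then \autoref{examples_summary}(2), or \autoref{construction2}, produces non-soluble groups with $\rho_k^*>1/2$, so the conjecture fails for such $k$.

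Finally, maximality among almost simple groups. By \autoref{Prime_power}(3) with $f=2$, an almost simple $G$ with $\rho_9(G)>1/2$ is $\PSigmaL_2(3^{3^a})$ with $a\in\{1,2\}$. It remains to show $\rho_9(\PSigmaL_2(3^9))<191/364$. The trivial coset contributes about $1/q$, which is tiny. Cosets of $F$ of order $9$ contribute elements whose cube lies in a coset of an order-$3$ automorphism, and the ratio equals the proportion with $(gF)^9=1$. The cosets of order-$3$ automorphisms behave like the $\alpha(1,2)$ computation, where \autoref{Prime_power} gave $\le 2/13$ for the relevant ratio. Averaging over the $9$ cosets then bounds $\rho_9$ well below $191/364$. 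The main obstacle is controlling the order-$9$ cosets. There I would again use Shintani descent to $\PSL_2(3)$/$\PGL_2(3)$: the count equals $|\PSL_2(3^9)|$ times the proportion of $A_4$-type classes of order $\mid 3$ weighted by centralisers, which gives roughly $3/4$ per such coset. The six order-$9$ cosets give at most $6\cdot 3/4$, the two order-$3$ cosets give $\le 2\cdot 2/13$, and the trivial coset gives $\approx 0$. Dividing by $9$ yields about $0.53$. This is close to $191/364\approx0.5247$, so this step needs the exact descent numbers, not estimates. That is where I expect the real work.
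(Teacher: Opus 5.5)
Your overall route is the paper's: \autoref{Prime_power}(3) with $f=2$ leaves only $\PSigmaL_2(3^3)$ and $\PSigmaL_2(3^9)$, and one then evaluates $\rho_9$ of both. The passage to multiples of $9$ via \autoref{examples_summary} is fine. However, neither numerical step is carried out correctly, and the second one, as written, does not reach the conclusion.

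First, in $\PSigmaL_2(27)$ your target count is wrong. We have $|\PSigmaL_2(27)|=29484=81\cdot 364$, so $191/364$ corresponds to $81\cdot 191=15471$ elements, not $15444$. Then $(15471-729)/2=7371$ is an integer, and it is exactly your descent count $9828(1/12+2/3)=7371$. The ``non-integer'' contradiction is an arithmetic slip. Your proposed repair, with $F$ and $F^2$ giving different counts such as $7371+7344$, is wrong: inversion maps $\PSL_2(27)F$ bijectively onto $\PSL_2(27)F^{2}$ and preserves orders. To make the descent rigorous, work in $\PGL_2$. The elements $gF$ with $g\in\PGL_2(27)$ and $(gF)^3$ of order dividing $3$ number $|\PGL_2(27)|(1/24+1/3)=7371$. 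All of them have $g\in\PSL_2(27)$, because otherwise $(gF)^3\notin\PSL_2(27)$ and so has even order.

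Second, and more seriously, the maximality claim is not proved: your estimate gives about $0.53>191/364$, so you cannot conclude. The error is in the six cosets $\PSL_2(3^9)F^j$ with $F^j$ of order $9$. There $(gF^j)^9$ is the full norm, which must be \emph{trivial}, so only the identity class of $\PGL_2(3)$ contributes. The proportion is $2/24=1/12$, the same $1/12$ the paper computes for $\alpha(1,1)$, not $3/4$. Your bound $2/13$ for the two cosets of order-$3$ automorphisms is also misapplied: it bounds $\rho_3$, not $\rho_9$, of those cosets. For $\rho_9$ one needs $(gF^3)^3$ of order dividing $3$, and descent to $\PGL_2(27)$ gives exactly $27/364$ there. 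With the correct $1/12$, even the trivial bound $1$ on the other three cosets gives $\rho_9(\PSigmaL_2(3^9))\le(3+6/12)/9=7/18<1/2$. The exact values give $\frac19\bigl(\frac{2q}{q^2-1}+2\cdot\frac{27}{364}+\frac{6}{12}\bigr)$ with $q=3^9$, which is precisely the paper's $2472421763637/34315188593868\approx 0.072$.
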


\begin{proof}
By \autoref{Prime_power} and its proof, if $G$ is a finite almost simple group, then $\rho_9(G)\geq 1/2$ can happen only when $G=\PSigmaL_2(3^3)$ or $G=\PSigmaL_2(3^9)$. The latter gives $\rho_9(\PSigmaL_2(3^9))=2472421763637/34315188593868 = 0.072\dots$, and the former gives $\rho_9(\PSigmaL_2(3^3)) = 191/364 >1/2$. Therefore this is the largest $\rho_9$ among the almost simple groups.
\end{proof}

\section{The exact value of $\epsilon_4$}

So far, we proved that the conjecture holds for powers of all primes $\geq 5$, and also for $2$ and $3$, while it fails for higher powers of $2$ and $3$ except for $4$. In this section, we compute the value of $\epsilon_4$. Note that the normaliser part of \autoref{self-normalising} in this situation is not as powerful as in \autoref{Prime_power}, since unlike odd primes, there are many finite non-soluble groups with self-normalising Sylow 2-subgroups. On the other hand, when we do have a Sylow 2-subgroup which is not self-normalizing, it gives a better upper bound of $1/3$, since the index of the Sylow $2$-subgroup in the normaliser is not divisible by $2$ in this case.

First we reduce to almost simple groups. We will need the following lemma which can be easily derived from the work of Liebeck and MacHale and its extension by Potter:

\begin{lem}\label{k=2_Coset}
{\upshape(1)} {\upshape{\cite[Theorem 4.1]{LM}, \cite[Theorem 3.3]{Potter}}} If $G$ is a finite group and $x\in \Aut(G)$ inverts more than half of the elements of $G$, then $G$ has an abelian normal subgroup $A$ and $G/A$ is an elementary abelian $2$-group. If $x$ inverts more than $4/15$ of the elements of $G$ then $G$ is soluble.\\{\upshape(2)} If $T$ is a finite nonabelian simple group and $x\in \Aut(T)$, then $\rho_2(xT)\leq 4/15$.
\end{lem}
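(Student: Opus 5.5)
Part (1) is quoted from Liebeck--MacHale and Potter, so only part (2) needs an argument, and the plan is to derive it from (1) by translating ``order dividing $2$ in the coset $xT$'' into ``inverted by an automorphism of $T$''.

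Fix $x\in\Aut(T)$ and consider an element $xt$ with $t\in T$. Its square is
\[(xt)^2 = x t x t = x^2\,(x^{-1}tx)\,t = x^2\, t^{x} t,\]
writing $t^x=x^{-1}tx\in T$. If $x^2\notin T$, no element of $xT$ squares into $T$, so $\rho_2(xT)=0$. Hence we may assume $x^2\in T$. Since $\rho_2(xT)>0$ is the only interesting case, we may also replace $x$ by an element $y$ of $xT$ with $y^2=1$; this is the same trick used at the end of the proof of \autoref{Coset_bound}, and it does not change the coset. With $x^2=1$, the condition $(xt)^2=1$ becomes $t^x t=1$, that is, $t^x=t^{-1}$. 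Consequently $\rho_2(xT)$ equals the proportion of elements of $T$ inverted by the automorphism of $T$ induced by conjugation by $x$ (an involution or the identity).

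Now apply part (1) to $G=T$ and the automorphism $\bar x$ induced by $x$. If $\bar x$ inverted more than $4/15$ of the elements of $T$, then $T$ would be soluble, contradicting the fact that $T$ is nonabelian simple. Therefore $\rho_2(xT)\le 4/15$.

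The only delicate point is the reduction to $x^2=1$. We must make sure that the counting in the coset is unaffected when the coset representative is changed. This holds because $\rho_2(xT)$ depends only on the coset $xT$, and each element of the coset is written uniquely as $yt$ with $t\in T$. The case $x\in T$ (so $\bar x$ is inner, possibly trivial) needs no separate treatment, since part (1) applies to every automorphism, including the identity: the identity inverts only elements of order dividing $2$, and this recovers the bound $\rho_2(T)\le 4/15$ from \autoref{examples_summary}(5).
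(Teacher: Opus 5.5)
Your proposal is correct and follows essentially the same route as the paper. Both arguments pick a representative of the coset with $o(x)\mid 2$ and observe that $(xt)^2=1$ if and only if $t^x=t^{-1}$. This makes $\rho_2(xT)$ the proportion of elements of $T$ inverted by $x$, and part (1) together with the non-solubility of $T$ then bounds it by $4/15$.
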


\begin{proof}
For (2), suppose that $\rho_2(xT)>0$. We may assume that $o(x)=2$. Note that for each $g\in T$, $o(xg)=2$ if and only if $g^x = g^{-1}$. Therefore $\rho_2(xT)$ is the ratio of elements of $T$ inverted by $x$. Now part (1) shows that this ratio cannot exceed $4/15$ since $T$ is non-soluble.
\end{proof}

Now we can reduce the conjecture for $k=4$ to almost simple groups.

\begin{thm}\label{almost_simple_reduction}
Let $G$ be a finite non-soluble group such that $\rho_4(G)>\rho_4(K)$ for all finite non-soluble groups $K$ with $|K|<|G|$. If $\rho_4(G)>7/15$, then $G$ is almost simple.
\end{thm}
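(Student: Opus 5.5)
By minimality and \autoref{monolithic_reduction}, $G$ is monolithic. Indeed, if $M$ is a proper nontrivial normal subgroup with $G/M$ non-soluble, then \autoref{quo} gives $\rho_4(G/M)\geq\rho_4(G)$ with $|G/M|<|G|$, contradicting minimality. So $G$ has a unique minimal normal subgroup $N=T^n$ with $T$ nonabelian simple. Moreover $G/N$ is soluble, and $N\unlhd G\leq \Aut(T)\wr S_n$ with $G$ acting transitively on the $n$ factors. The goal is to show $n=1$. I will assume $n\geq 2$ and derive $\rho_4(G)\leq 7/15$.

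The main tool is \autoref{Coset_bound} with $k=4$. The admissible cycle lengths $\ell_j$ of $\sigma$ divide $4$, so they lie in $\{1,2,4\}$. The corresponding single-coset factors are
\[
a_1=\max_{o(x)\mid 4}\rho_4(xT),\qquad a_2=\max_{o(x)\mid 2}\rho_2(xT),\qquad a_4=\max_{x\in\Aut(T)}\rho_1(xT).
\]
By \autoref{k=2_Coset}(2), $a_2\leq 4/15$, and clearly $a_4\leq 1/|T|\leq 1/60$. For each coset $gN$, the bound \eqref{Single_Coset_bound} is a product over the cycles of $\sigma$. Hence any coset whose permutation part has a cycle of length $2$ or $4$ contributes at most $4/15<7/15$.

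That leaves the cosets in $G\cap \Aut(T)^n$, i.e. those with $\sigma=1$. For these the bound is $a_1^n$, and I need $a_1^2\leq 7/15$, i.e. $a_1\leq\sqrt{7/15}\approx 0.683$. This is the main obstacle, since it requires a uniform bound on $\rho_4(xT)$ over all simple $T$ and all $x$ with $o(x)\mid 4$. I would argue as follows.
\begin{enumerate}
\item For $x\in T$ (the inner coset), $\rho_4(T)$ is small. I would use \autoref{self-normalising}(1) with $S$ a Sylow $2$-subgroup: if it is not self-normalising the bound is at most $1/3$, and otherwise the simple groups with self-normalising Sylow $2$-subgroups are known and can be checked.
\item For outer $x$, I would write elements of $xT$ of order dividing $4$ as $xg$ with $(xg)^4=1$. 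When $x^2\in T$, such elements square into a coset on which $\rho_2$ is controlled by \autoref{k=2_Coset}. Each $y$ of order dividing $2$ in $x^2T$ has few square roots in $xT$, and I would bound their number by $|\mathbf{C}_T(y)|$-type estimates.
\end{enumerate}
Failing a clean general argument, I would reduce to checking $\rho_4(xT)$ against the classification for small cases such as $A_5$, $A_6$ ($M_{10}$ has $\rho_4$ coset values to check) and $\PSL_2(q)$, and then use generic bounds for large Lie type groups.

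A cheaper alternative for the $\sigma=1$ cosets is to average rather than bound each coset by the maximum. By transitivity, $G\cap\Aut(T)^n$ is a proper subgroup of $G$ whenever $n\geq 2$, so it makes up at most half of $G$, and the remaining cosets each contribute at most $4/15$. This gives
\[
\rho_4(G)\leq \tfrac12\cdot a_1^n+\tfrac12\cdot\tfrac4{15}\leq \tfrac12+\tfrac2{15}=\tfrac{19}{30}.
\]
This is too weak on its own, so the real content is still the estimate $a_1^2\leq 7/15$, possibly combined with this averaging. The case $n\geq 2$ thus yields $\rho_4(G)\leq 7/15$, which contradicts the hypothesis, so $n=1$ and $G$ is almost simple.
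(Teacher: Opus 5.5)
\textbf{There is a genuine gap.} Your monolithic reduction is fine. So is your use of \eqref{Single_Coset_bound} together with \autoref{k=2_Coset}(2) to show that every coset $gN$ whose permutation part $\sigma$ is nontrivial has $\rho_4(gN)\le 4/15$. The problem is what comes next: for $n\ge 2$ your argument rests entirely on the uniform estimate $\max_{o(x)\mid 4}\rho_4(xT)\le\sqrt{7/15}$ over \emph{all} nonabelian simple $T$, and you never prove it. Items 1 and 2 are only a plan, and the fallback is ``check against the classification'' (note also that there are infinitely many simple groups with self-normalising Sylow $2$-subgroups, so item 1 is not a finite check). This estimate concerns individual cosets of every simple group inside its automorphism group, and nothing in the paper supplies it. It is also nearly sharp: for $T=A_5$ and $x$ a transposition, $\rho_4(xT)=40/60=2/3$, against $\sqrt{7/15}\approx 0.683$. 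So crude generic bounds cannot establish it, and as written the case $n\ge 2$ is not proved.

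\textbf{The missing idea.} You never need to bound the cosets with $\sigma=1$ beyond the trivial bound $1$. Instead, use the minimality hypothesis on $H:=G\cap\Aut(T)^n$. For $n\ge 2$, transitivity makes $H$ a proper subgroup of $G$, and $H\supseteq N$ is non-soluble. Write
\[
\rho_4(G)|G|=\rho_4(H)|H|+\rho_4(G\setminus H)|G\setminus H|.
\]
Since $\rho_4(G\setminus H)\le 4/15<\rho_4(G)$, this forces $\rho_4(H)>\rho_4(G)$, contradicting the hypothesis on $G$.

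This is exactly how the paper handles $n=2$: it gets $\rho_4(H)>14/15-4/15=2/3>\rho_4(G)$. Your ``cheaper alternative'' already had both ingredients, but you used them only to bound $\rho_4(G)$ from above, which gave the useless $19/30$.

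For $n\ge 4$ the paper argues differently. It first uses \autoref{Sylow_quotient} and minimality to make $G/N$ a $2$-group, so the permutation image $P$ is a transitive $2$-group and $n=2^f$. Then the $\sigma=1$ cosets form a fraction $1/|P|\le 1/4$ of all cosets, since $c_1+c_2+c_3=|P|c_1\ge 4c_1$. The paper bounds those cosets by $1$, the $2$-cycle cosets by $4/15$, and the remaining cosets by $16/225$. This already gives $\rho_4(G)<7/15$; even bounding every nontrivial coset by $4/15$ would give at most $9/20<7/15$.
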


\begin{proof}
As in the proof of \autoref{Prime_power}, $G$ must be a monolithic group with unique minimal normal subgroup $N:=T^n$ for some $n\in \mathbb{Z}_{>0}$ and a finite nonabelian simple group $T$, and $G/N$ is soluble. Also, $N\unlhd G\leq  \Aut(T)\wr P = \Aut(T)^n\rtimes P$ for some transitive $2$-subgroup $P$ of $S_n$, so $n=2^f$ for some $f\in \mathbb{Z}_{\geq 0}$. 

For each coset $gN$ of $N$ in $G$, let $s(gN)$ be the element of $P<S_n$ such that $g = (a_1,\dots,a_n)s(gN) \in \Aut(T)^n\rtimes P$ for some $(a_1,\dots,a_n)\in \Aut(T)^n$. Let $c_1$ be the number of cosets $gN$ such that $s(gN)=1\in S_n$, $c_2$ be the number of those whose $s(gN)$ is a $2$-cycle, and $c_3 = |G:N| - c_1-c_2$ be the number of other cosets, which must have either a $4$-cycle or at least two $2$-cycles in the disjoint cycle decomposition of $s(gN)$. 

Note that if $s(gN)=1$ then $s(ghN)=s(gN)s(hN)=s(hN)$, so $c_2$, $c_3$ are multiples of $c_1$, and $c_2/c_1$, $c_3/c_1$ are the number of elements of the transitive $2$-subgroup $P=\{s(gN)\mid g\in G\}\leq S_n$. In particular $c_1+c_2+c_3 =|P|c_1\geq nc_1$, where the last inequality follows from the transitivity of $P$. 

By \eqref{Single_Coset_bound}, \autoref{k=2_Coset} and the fact that $\rho_1(xT)\leq 1/60$ for all $x\in \Aut(T)$, we get 
{\small
\begin{align}
\rho_4(G) &= \frac{1}{|G:N|}\sum_{gN \in G/N}\rho_4(gN)\nonumber\\&= \frac{1}{|G:N|}\left(\sum_{\substack{gN\in G/N\\s(gN)=1}}\rho_4(gN) + \sum_{\substack{gN\in G/N\\s(gN)\text{ is a 2-cycle}}}\rho_4(gN) + \sum_{\substack{gN\in G/N\\1\neq s(gN)\text{ is not a 2-cycle}}}\rho_4(gN)\right)\nonumber\\&\leq \frac{1}{|G:N|}\left(c_1\left(\max_{x\in \Aut(T)}\rho_4(xT) \right)^n + c_2\frac{4}{15}\left(\max_{x\in \Aut(T)}\rho_4(xT) \right)^{n-2} + c_3\frac{4^2}{15^2} \right)\nonumber\\&\leq \frac{1}{c_1+c_2+c_3}\left(c_1 + \frac{4c_2}{15} + \frac{16c_3}{225} \right) = \frac{225c_1 + 60c_2 + 16c_3}{225(c_1+c_2+c_3)}\label{c1c2c3}.
\end{align} }
If $n\geq 4$, then $c_2/c_1 + c_3/c_1\geq n-1 \geq 3$, so \eqref{c1c2c3} becomes
\begin{align*}
&\frac{90c_1 +135c_1 + 60c_2+16c_3   }{225(c_1+c_2+c_3)}\leq \frac{90c_1+45(c_2+c_3) + 60c_2+16c_3   }{225(c_1+c_2+c_3)}\\=& \frac{90c_1 + 105c_2 + 61c_3 }{225(c_1+c_2+c_3)} < \frac{105(c_1+c_2+c_3)}{225(c_1+c_2+c_3)}=\frac{7}{15}.
\end{align*}

If $n=2$, then the transitivity forces $P=C_2$ and $G\not\leq \Aut(T)^2$. Consider the subgroup $H = G\cap \Aut(T)^2$. Note that $H$ is the union of the cosets $gN$ of $G$ such that $s(gN)=1$, and $G\setminus H$ is the union of all other cosets $gN$, whose $s(gN)$ is a single $2$-cycle since this is the only nonidentity element of $S_2$. Then by the arguments above and \eqref{Single_Coset_bound}, $\rho_4(G\setminus H)\leq 4/15$ and $7/15<\rho_4(G) = \left(\rho_4(H) + \rho_4(G\setminus H)\right)/2 $, so we get $$\rho_4(H) > 14/15 - \rho_4(G\setminus H) > 14/15 - 4/15 = 2/3 > \rho_4(G).$$ This contradicts the minimality of $G$.
\end{proof}

We now check the conjecture for each finite almost simple group. Our main strategy is the following. Let $G$ be a minimal counterexample. By the minimality of $G$, any subgroup satisfying the condition of \autoref{self-normalising} must be soluble, self-normalizing and have $\rho_4\geq 7/15$. For most of the finite almost simple group, we locate a subgroup satisfying the condition of \autoref{self-normalising}, but which does not satisfy one of the three conditions stated above. 

\begin{prop}\label{4_sporad}
Let $G$ be a finite almost simple group whose simple factor is a sporadic simple group or an alternating group. Then $\rho_4(G)\leq 7/15$. 
\end{prop}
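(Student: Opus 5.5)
We need to show that $\rho_4(G)\leq 7/15$ for every almost simple $G$ with socle $T$ sporadic or alternating. The plan is to combine the coset averaging principle with \autoref{self-normalising}, handling the alternating groups uniformly and the sporadic groups by a finite check.

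First, since $G/T\leq \Out(T)$ and $\rho_4(G)$ is the average of $\rho_4(gT)$ over the cosets of $T$ in $G$, it suffices to bound $\rho_4(xT)$ for each $x\in\Aut(T)$ of order dividing $4$. Equivalently we may bound $\rho_4(G)$ directly for each of the finitely many almost simple groups with a given socle.

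\textbf{Sporadic socle.} Here $|\Out(T)|\leq 2$, so $G=T$ or $G=\Aut(T)$. For each of the 26 groups the numbers of elements of orders $1$, $2$ and $4$ can be read off the ATLAS character tables (class sizes $|G|/|\mathbf C_G(g)|$). From these data, $\rho_4$ is found to be far below $7/15$; the largest values should occur for small groups such as $M_{11}$, $M_{12}$ and $M_{22}{:}2$.

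Alternatively, for many of these groups \autoref{self-normalising}(1) with a Sylow $2$-subgroup $P$ gives a quick bound. Whenever $\mathbf N_G(P)>P$, the index $|\mathbf N_G(P):P|$ is odd, hence at least $3$, and so $\rho_4(G)<1/3$. For the remaining groups a direct count is needed. It is routine, and I would present it as a short table.

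\textbf{Alternating socle.} Here $G\in\{A_n,S_n\}$ for $n\geq 5$, together with $\PGL_2(9)$, $\text{M}_{10}$ and $\Aut(A_6)$ when $n=6$. The values for $n=6$ are already known or easily computed: $\rho_4(A_6)$, $\rho_4(S_6)$, $\rho_4(\PGL_2(9))$, $\rho_4(\text{M}_{10})$ and $\rho_4(\Aut(A_6))$ all lie below $7/15$, and \autoref{M10} gives the $\rho_8$ analogues.

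For $S_n$ and $A_n$ in general, an element has order dividing $4$ exactly when all its cycles have length $1$, $2$ or $4$. Hence, as in the proof of \autoref{Alt},
\[
\rho_4(S_n)=\sum_{2b+4c\leq n}\frac{1}{2^b\,b!\,4^c\,c!\,(n-2b-4c)!},
\]
which is the coefficient of $t^n$ in $\exp(t+t^2/2+t^4/4)$. For $A_n$ we restrict to even permutations, which means $b$ even, and the count doubles: $\rho_4(A_n)$ is the sum over $b$ even, multiplied by $2$.

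Small cases give $\rho_4(S_5)=(1+10+15+30)/120=56/120=7/15$, attaining the bound, and $\rho_4(A_5)=16/60$. To finish, I would show that $\rho_4(S_n)$ and $\rho_4(A_n)$ decrease for $n\geq 5$. One route is a recurrence: if $a_n=n!\rho_4(S_n)$, then
\[
a_n=a_{n-1}+(n-1)a_{n-2}+(n-1)(n-2)(n-3)a_{n-4}.
\]
This gives $\rho_4(S_n)=\bigl(\rho_4(S_{n-1})+\rho_4(S_{n-2})+\rho_4(S_{n-4})\bigr)/n$, so by induction $\rho_4(S_n)\leq 3\cdot\frac{7}{15}/n<7/15$ once $n\geq 7$, after checking $n=6$ directly. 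A similar recurrence, or the parity-restricted generating function, handles $A_n$; alternatively one can bound $\rho_4(A_n)\leq 2\rho_4(S_n)$, which is small enough for $n\geq 8$, and check smaller $n$ directly.

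\textbf{Main obstacle.} The delicate part is keeping the alternating-group induction clean, since the base cases must be computed exactly and the bound is attained by $S_5$. The sporadic case is only bookkeeping with the ATLAS.
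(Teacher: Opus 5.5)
Your proof is correct in outline. For sporadic socles it is the same as the paper's: an ATLAS-based count, tabulated.

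\textbf{Alternating socles: a different route.} The paper computes $\rho_4$ directly for $5\le n\le 11$, including $\text{M}_{10}$, $\PGL_2(9)$ and $\Aut(A_6)$. For $n\ge 12$ it applies \autoref{self-normalising} to $H=D_8^m\times\langle\sigma\rangle$. Every element of order dividing $4$ is conjugate into $H$ (and $A_n$-conjugate into $H\cap A_n$), while block-swapping involutions such as $(1\,5)(2\,6)(3\,7)(4\,8)$ normalise $H$; this gives $\rho_4\le 1/4$.

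Your argument is purely enumerative. The recurrence $\rho_4(S_n)=\bigl(\rho_4(S_{n-1})+\rho_4(S_{n-2})+\rho_4(S_{n-4})\bigr)/n$ is correct and reproduces the paper's tabulated values ($67/315$, $389/2520$, $2101/22680$, $6833/113400$). Combined with $\rho_4(A_n)\le 2\rho_4(S_n)$, it gives:
\begin{itemize}
\item an explicit $O(1/n)$ decay;
\item a reduction of the exact checks to $n\le 8$, plus the exotic groups at $n=6$.
\end{itemize}
It does not need the conjugacy claim about $H\cap A_n$. The paper's route instead illustrates the normaliser inequality that drives its Lie-type cases, at the cost of the table for $7\le n\le 11$.

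\textbf{Slips to fix.} There are three, none of them fatal.
\begin{enumerate}
\item The induction as written fails at $n=7$. The recurrence there involves $\rho_4(S_3)=2/3>7/15$ (and at $n=8$ it involves $\rho_4(S_4)=2/3$). Indeed $\rho_4(S_7)=67/315>1/5=3\cdot\frac{7}{15}/7$. Use the trivial bound $\rho_4\le 1$ instead, which gives $\rho_4(S_n)\le 3/n<7/15$ for $n\ge 7$.

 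For $A_n$ you then need $\rho_4(S_n)<7/30$ for $n\ge 8$. For $n\ge 9$ all three terms of the recurrence involve $S_m$ with $m\ge 5$, so $\rho_4(S_n)\le 7/(5n)\le 7/45$. For $n=8$ use $\rho_4(S_8)=389/2520$ directly. Check $A_5,A_6,A_7$ by hand ($4/15$, $17/45$, $92/315$).

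\item A $4$-cycle is odd, so even permutations correspond to $b+c$ even, not $b$ even. Your formula as stated would give $\rho_4(A_5)=2\cdot 46/120$, contradicting your own correct value $16/60$. This only matters if you use the parity-restricted formula rather than $\rho_4(A_n)\le 2\rho_4(S_n)$.

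\item Your opening reduction to single cosets cannot work with the bound $7/15$. The odd coset of $A_5$ in $S_5$ has $\rho_4=40/60=2/3$. What is needed is to bound $\rho_4(G)$ itself, which is what you then do. The two are not equivalent.
\end{enumerate}
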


\begin{proof}
Since $|\Out(T)|\leq 2$ for sporadic simple groups $T$, the number of finite almost simple groups $G$ whose nonabelian simple factor is sporadic is not too large. Using the information of sporadic simple groups available in \cite{Atlas, WebAtlas}, we computed $\rho_4$ of all such $G$, and listed them in \autoref{rho4table}. 

If $\soc(G)$ is $A_n$ for some $n\geq 5$, then $G$ is either $A_n$ or $S_n$ except when $n=6$. We may write $n = 4m+2s + t$ for some $m\in \mathbb{Z}_{>0}$ and $s,t\in \{0,1\}$. For $n<12$, $G$ is one of the following groups: $A_n$ and $S_n$ for $5\leq n\leq 11$, $\text{Mathieu group } M_{10}, \PGL_{2}(9)$, and $\Aut(S_6)$. The $\rho_4$ for these groups are computed in \autoref{alt_table} for $n=7,9,10,11$, and in \autoref{4equichartable} for $n=5,6,8$. Suppose $n\geq 12$ so that $m\geq 2$, and consider the following subgroup: $$H = \langle (1\ 2\ 3\ 4), (1\ 3), (5\ 6\ 7\ 8), (5\ 7), \dots, (4m-3\ 4m-2\ 4m-1\ 4m), (4m-3\ 4m-1), \sigma \rangle < S_n$$ where $\sigma=1$ if $s=0$ and $\sigma = (4m+1\ 4m+2)$ if $s=1$. Then every element of $S_n$ of order dividing $4$ is conjugate to an element of $H$, and every element of $A_n$ of order dividing $4$ is conjugate in $A_n$ to an element of $H\cap A_n$. Also, the element $(1\ 5)(2\ 6)(3\ 7)(4\ 8)\in A_n\setminus H$ normalises both $H$ and $H\cap A_n$, so $\mathbf{N}_{S_n}(H) > H$ and $\mathbf{N}_{A_n}(H\cap A_n)>H\cap A_n$. Similarly $(1\ 9)(2\ 10)(3\ 11)(4\ 12)$ normalises $H$ and $H\cap A_n$, so in fact $|\mathbf{N}_{S_n}(H):H|\geq 4$ and $|\mathbf{N}_{A_n}(H):H\cap A_n|\geq 4$. By \autoref{self-normalising}, $\rho_4(S_n)\leq 1/4$ and $\rho_4(A_n)\leq 1/4$.
\end{proof}

\begin{table}[h]
\begin{center}
\caption{$\rho_4(G)$ for almost simple $G$ with $\soc(G)=A_n$, $n=7,9,10,11$}\label{alt_table}
\begin{tabular}{|c|c|c|}
\hline
$n$ & $\rho_4(A_n)$ & $\rho_4(S_n)$\\
\hline
$7$ & $92/315 = 0.292\dots$ & $ 67/315 =0.212\dots $\\
\hline
$9$ & $ 316/2835=0.111\dots$ & $2101/22680 = 0.092\dots$\\
\hline
$10$ & $ 6241/113400=0.055\dots $ & $ 6833/113400=0.060\dots$\\
\hline
$11$ & $29731/1247400=0.023\dots $ & $20729/623700=0.033\dots $\\
\hline
\end{tabular}
\end{center}
\end{table}

To study the almost simple groups of Lie type, we will use the following elementary lemma.

\begin{lem}\label{normalise}
Let $p$ be a prime. Suppose that $T\unlhd G$, $H$ is a subgroup of $T$ containing a Sylow $p$-subgroup of $T$, and a subgroup $K\leq G$ stabilizes the $T$-conjugacy class of $H$. If $S$ is a $p$-subgroup of $K$, then $S$ normalises some conjugate of $H$. In particular, if $G=TK$ then $\mathbf{N}_G(H)$ contains a Sylow $p$-subgroup of $T$.
\end{lem}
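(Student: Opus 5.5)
The plan is a Frattini-type argument run inside $K$ rather than $G$. Let $\mathcal{C}$ denote the $T$-conjugacy class of $H$. By hypothesis $K$ stabilises $\mathcal{C}$, so $K$ acts on $\mathcal{C}$ by conjugation. This action is well defined because $T\unlhd G$: for $k\in K$ and $t\in T$ we have $(H^t)^k=(H^k)^{t^k}$ with $t^k\in T$, and $H^k\in\mathcal{C}$.

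The size of $\mathcal{C}$ is $|T:\mathbf{N}_T(H)|$. Since $H\leq \mathbf{N}_T(H)$ and $H$ contains a Sylow $p$-subgroup of $T$, this index divides $|T:H|$, which is prime to $p$. So $|\mathcal{C}|$ is coprime to $p$.

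Now restrict the action to the $p$-subgroup $S\leq K$. Every $S$-orbit on $\mathcal{C}$ has size a power of $p$. Since the orbit sizes sum to $|\mathcal{C}|$, which is not divisible by $p$, at least one orbit has size $1$. A fixed point is a conjugate $H^t\in\mathcal{C}$ with $(H^t)^s=H^t$ for all $s\in S$, that is, $S\leq \mathbf{N}_G(H^t)$. This proves the first claim.

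For the second claim, assume $G=TK$ and take $S$ to be a Sylow $p$-subgroup of $K$. By the first part, $S$ normalises some $H^t$. Conjugating by $t^{-1}\in T$, we get that $S^{t^{-1}}$ normalises $H$.

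Next, $S^{t^{-1}}$ is a Sylow $p$-subgroup of $K^{t^{-1}}$, and $G=TK^{t^{-1}}$ as well. This is the one step needing care. Since $S$ only normalises $H$ and need not lie in $T$, we combine it with a Sylow $p$-subgroup $P_0$ of $T$ contained in $H$. Then $P_0S^{t^{-1}}$ is a $p$-subgroup: $S^{t^{-1}}$ normalises $T$, so it normalises $P_0T$-conjugates, and $P_0S^{t^{-1}}\leq \mathbf{N}_G(H)$ is a subgroup because $P_0\leq H\unlhd \mathbf{N}_G(H)$ makes $H S^{t^{-1}}$ a group.

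The real content is simpler than this, however. The claim that $\mathbf{N}_G(H)$ contains a Sylow $p$-subgroup of $T$ holds trivially, since $H\leq \mathbf{N}_G(H)$ and $H$ contains a Sylow $p$-subgroup of $T$. The intended stronger statement is that $\mathbf{N}_G(H)$ contains a Sylow $p$-subgroup of $G$. For this, apply the first part to the Sylow $p$-subgroup $S$ of $K$ to get $S^{t^{-1}}\leq \mathbf{N}_G(H)$. Then $HS^{t^{-1}}$ is a subgroup of $\mathbf{N}_G(H)$. Its $p$-part is $|H|_p\,|S|/|H\cap S^{t^{-1}}|_p$, which is at least $|T|_p|K|_p/|T\cap K|_p=|G|_p$, using $|G|=|T||K|/|T\cap K|$ and $H\cap S^{t^{-1}}\leq T\cap K^{t^{-1}}$. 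So $\mathbf{N}_G(H)$ has order divisible by $|G|_p$.

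The main obstacle is this $p$-part bookkeeping, specifically the inequality $|H\cap S^{t^{-1}}|_p\leq |T\cap K^{t^{-1}}|_p$ after conjugation. I expect it to be routine.
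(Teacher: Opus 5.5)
Your fixed-point argument for the first claim is the paper's proof: $|\mathcal{C}|=|T:\mathbf{N}_T(H)|$ divides $|T:H|$, which is prime to $p$, so the $p$-group $S$ fixes some member of $\mathcal{C}$.

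The paper's proof stops there and says nothing about the ``in particular''. You are right that, read literally, that clause is trivial because $H\leq\mathbf{N}_G(H)$. You are also right that what is used later (for $F_4(q)$, $E_6^{\pm}(q)$, ${}^3D_4(q)$, etc.) is the stronger claim that $\mathbf{N}_G(H)$ contains a Sylow $p$-subgroup of $G$. Your order count for $HS^{t^{-1}}\leq\mathbf{N}_G(H)$ proves this correctly. The step you flag is immediate, since $H\cap S^{t^{-1}}$ is a $p$-subgroup of $T\cap K^{t^{-1}}=(T\cap K)^{t^{-1}}$.

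There is a shorter route, and presumably the intended one. If $G=TK$, then $G$ itself stabilises $\mathcal{C}$ and $T$ is already transitive on it, so $|G:\mathbf{N}_G(H)|=|\mathcal{C}|$ is prime to $p$. Equivalently, apply your first part with $K$ replaced by $G$ and $S$ a Sylow $p$-subgroup of $G$, then conjugate by an element of $T$. Your $p$-part bookkeeping works but gains nothing over this.

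Finally, delete the paragraph about $P_0S^{t^{-1}}$. Its claim that this set is a $p$-subgroup is false in general: a Sylow $p$-subgroup of $H$ times a $p$-group normalising $H$ need not be a subgroup. For example, take $G=T=H=S_3$, $p=2$, $P_0=\langle(1\,3)\rangle$, $S=\langle(1\,2)\rangle$; then $P_0S$ is a $4$-element subset of $S_3$. Nothing afterwards depends on that paragraph, but as written it is an incorrect assertion in the proof.
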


\begin{proof}
Note that $|T|_p=|H|_p$, so the number of $T$-conjugates of $H$ is not divisible by $p$. By the orbit-stabilizer theorem, the action of $S$ on the $T$-conjugacy class of $H$ must have an orbit of length $1$.
\end{proof}

\begin{thm}\label{4_equichar}
Let $G$ be a finite almost simple group whose simple factor is of Lie type in even characteristic. Then $\rho_4(G)\leq 7/15$. 
\end{thm}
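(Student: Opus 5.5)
We plan to proceed as in \autoref{4_sporad}, using \autoref{self-normalising} with the Sylow $2$-subgroup $P$ of $G$, which is the defining-characteristic Sylow subgroup. Let $T=\soc(G)$ be of Lie type over $\mathbb{F}_q$ with $q=2^a$. In $T$, the Sylow $2$-subgroup $U$ is the unipotent radical of a Borel subgroup $B=U\rtimes H_0$, and $\mathbf{N}_T(U)=B$. Here $H_0$ is a maximal split torus of order roughly $(q-1)^{\mathrm{rank}}$ (up to twisting), and it is odd.

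The Borel subgroup $B$ contains a Sylow $2$-subgroup of $T$. Its $T$-class is stabilised by all field automorphisms, by diagonal automorphisms, and by those graph automorphisms that preserve $B$ (graph automorphisms always preserve the class of the standard Borel). So, by \autoref{normalise}, we may take $P\cap T=U$ and $P$ normalising $B$. The odd torus $H_0$ then contributes to $\mathbf{N}_G(P)$ exactly the centraliser in $H_0$ of the $2$-part of $G/T$. More precisely, $\mathbf{C}_{H_0}(P/U)$ contains the torus over the fixed field of the $2$-part of the field automorphisms. For instance, if the field-automorphism $2$-part has fixed field $\mathbb{F}_{2^b}$ with $b\ge 2$, then the $\mathbb{F}_{2^b}$-points of the torus are nontrivial and odd.

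Whenever $\mathbf{N}_G(P)>P$, the index $|\mathbf{N}_G(P):P|$ is odd and at least $3$. Then \autoref{self-normalising}(1) gives $\rho_4(G)\le\rho_4(P)/3<1/3<7/15$, and we are done. So the argument reduces to the cases where $P$ is self-normalising in $G$. Since $\mathbf{N}_G(P)/P$ contains the centraliser in the torus, these are essentially:
\begin{enumerate}[label=(\roman*)]
\item $q=2$, where the torus is trivial, or more generally the fixed field of the $2$-part of the field automorphisms is $\mathbb{F}_2$ (i.e.\ $a$ is a power of $2$ and $G$ induces the full $2$-part of the field automorphisms);
\item twisted cases where the relevant torus over the fixed field is trivial.
\end{enumerate}
This matches the Guralnick--Malle--Navarro-type classification of self-normalising Sylow $2$-subgroups, which we would invoke to pin down the list.

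In the remaining cases we bound $\rho_4$ directly. The cleanest route is again \autoref{self-normalising}(1) applied not to $P$ but to a larger, possibly non-soluble, subgroup $S$ containing $P$. We would take the minimal parabolic or Levi-type subgroup $L$ that contains every class of elements of order dividing $4$. In defining characteristic, elements of order dividing $4$ are unipotent and all lie in $U$. So we can instead bound $\rho_4(P)$ itself, by counting elements of order dividing $4$ in the unipotent group $U$ together with its extension by field automorphisms. Elements of order $\le 4$ in $U$ are constrained by the root-subgroup commutator relations. We expect $\rho_4(U)$ to be small once the rank or $a$ is large: roughly $q^{-c}$ for groups where $U$ has nilpotency class at least $3$, since $u^4=1$ imposes conditions on higher root coordinates.

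The main obstacle is the finitely many small groups where $\rho_4(P)$ is not small. Examples are $\PSL_2(4)\cong A_5$, $\PSL_2(16)$ with field automorphisms, $\SL_3(2)$, $\SL_3(4).2^k$, $\Sp_4(2)'\cong A_6$, $\Sp_4(4)$ extensions, $\PSU_3(4)$, $\Sz(8)$, $\Sz(32)$ with automorphisms, and $G_2(2)'$, ${}^2F_4(2)'$. For these we would compute $\rho_4$ from the ATLAS character tables and power maps, recording them in a table like \autoref{4equichartable} and checking each is at most $7/15$. The case $\PSL_2(2^{2^c})$ extended by the full $2$-part of its field automorphisms (the analogue of $\PSigmaL_2(3^{3^a})$) needs a coset computation like the one in \autoref{Prime_power}(4): bound the ratio of $g$ with $(gF)^4=1$ in each coset $F\cdot T$ by counting solutions of $gg^{F}g^{F^2}g^{F^3}=1$ via torus-coset arguments. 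We then combine the cosets as in the proof of \autoref{Coset_bound}. We anticipate that the bound $7/15$ is attained only in the limit, by $S_5=\PSL_2(4).2$ crossed with cyclic groups.
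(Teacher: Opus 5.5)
\textbf{There is a genuine gap, in the self-normalising case.}

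Your opening step is sound, and it is the same device the paper uses inside its rank-one and rank-two families. Any $x\in\mathbf{N}_G(P)\setminus P$ forces $|\mathbf{N}_G(P):P|\ge 3$, so \autoref{self-normalising} gives $\rho_4(G)<1/3$. The problem is what you do when $P$ is self-normalising: you fall back on $\rho_4(G)\le\rho_4(P)$, and that bound is far too weak.
\begin{itemize}
\item Over $\mathbb{F}_2$ the split torus is trivial, so infinite families have self-normalising Sylow $2$-subgroups. For $T=\PSL_n(2)$ and $u=I+N\in U$ you have $u^4=I+N^4$. For $n=5$ the only possibly nonzero entry of $N^4$ is $n_{12}n_{23}n_{34}n_{45}$, so $\rho_4(P)=15/16$. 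For $n=6$ a direct count of the three entries $(1,5),(2,6),(1,6)$ of $N^4$ gives $\rho_4(P)=51/64$.
\item Your heuristic ``roughly $q^{-c}$'' says nothing at $q=2$. There any decay must come from the rank, and it would have to be proved type by type with an explicit threshold.
\item The same issue arises in your own case (i) with field automorphisms. For $G=\PSL_4(4)\rtimes\langle f\rangle$ with $f$ the Frobenius, $\mathbf{C}_{H_0}(f)=1$, so $P$ is self-normalising. Yet $\rho_4(U)=1$ because $N^4=0$ for $4\times 4$ nilpotent $N$, so $\rho_4(P)\ge 1/2$.
\item Nothing in your plan controls $\Sp_{2n}(2)$, $\Omega^{\pm}_{2n}(2)$, $E_n(2)$ or their graph and field extensions.
\end{itemize}
So your reduction to ``finitely many small groups'' is not established. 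The leftover set is undetermined, and it is certainly not the short list you name.

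\textbf{The missing idea is the paper's minimal-counterexample step.} This is what makes your abandoned ``larger, possibly non-soluble, subgroup containing $P$'' useful. Applying \autoref{self-normalising} to such a subgroup only helps if you already control its $\rho_4$. The paper's argument runs as follows.
\begin{enumerate}
\item Take $G$ with $\rho_4(G)>7/15$ and $\rho_4(G)>\rho_4(K)$ for every non-soluble $K$ with $|K|<|G|$. This is minimality over all non-soluble groups, which is legitimate because of \autoref{almost_simple_reduction}.
\item Using \autoref{Sylow_quotient} and \autoref{self-normalising}, reduce to $G=TP$ with $G\cap\Phi_T\Gamma_T$ of exponent dividing $4$.
\item Apply \autoref{self-normalising} to $\mathbf{N}_G(R)$, where $R\ge P\cap T$ is a maximal parabolic normalised by $G\cap\Phi_T\Gamma_T$. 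Substitutes are used when no such parabolic exists, e.g.\ for $\PSL_n(q)$ with $n$ odd, or $F_4(q)$.
\item Minimality forces $\mathbf{N}_G(R)$ to be soluble. Burness' list of soluble maximal subgroups then leaves only the families $\PSL_2(q)$, $\PSL_3(q)$, $\PSU_3(q)$, $\PSp_4(q)$, ${}^2B_2(q)$ and a short explicit list of small groups.
\item Only at this point does your torus-normaliser argument, together with a Borel-quotient computation for $\PSp_4(q)$, finish the families.
\end{enumerate}
Without this inductive step, or some genuinely uniform bound replacing it, your plan does not close.

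Note also that Guralnick--Malle--Navarro treats only odd primes. It cannot supply your list of self-normalising cases for $p=2$, where such cases are common.
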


\begin{proof}
Let $G$ be an almost simple group such that $T=\soc(G)$ is a simple group of Lie type over $\mathbb{F}_q$ where $q=2^e$ for some $e\in\mathbb{Z}_{>0}$. Suppose that $G$ is a minimal counterexample, so that $\rho_4(G)>7/15$ and that $\rho_4(G)>\rho_4(K)$ for all non-soluble $K$ with $|K|<|G|$. By \autoref{Sylow_quotient} we may assume that $G = TP$ for some Sylow $2$-subgroup $P$ of $G$, and that if $Q=T\cap P$, then $\rho_4(P/Q)=\rho_4(G/T)\geq \rho_4(G)>7/15$. Recall the following facts about automorphisms of $T$, cf. \cite[Section 2.5]{GLS}: An element of $\Aut(T)$ can be written as $idfg$ for $i\in T$, $d$ a diagonal automorphism, $f$ a field automorphism and $g$ a graph automorphism. $\Aut(T)=InnDiag(T)\rtimes (\Phi_T\Gamma_T)$ for the group $InnDiag(T)$ of inner and diagonal automorphisms and a group of (standard) field and graph automorphisms $\Phi_T\Gamma_T$. Also, we can choose a group of (standard) field automorphisms $\Phi_T$ so that $\Phi_T\Gamma_T=\Phi_T\times \Gamma_T$ for a group of (standard) graph automorphisms $\Gamma_T$ if $T\cong \PSL_n(q), \POmega_{2n}^+(q), E_6(q)$, and in other cases $\Phi_T\Gamma_T$ is cyclic and $\Phi_T$ has index $1$ or $2$ in $\Phi_T\Gamma_T$. $\Gamma_T$ is either trivial or $C_2$, except when $T\cong \POmega_8^+(q)$ in which case $\Gamma_T\cong S_3$.

Since $|InnDiag(T)/T|$ is relatively prime to the characteristic, $|\Aut(T)|_2 = |T\rtimes \Phi_T\Gamma_T|_2$. Therefore, by replacing $G$ with a conjugate in $\Aut(T)$, we may assume that $P\leq T\rtimes \Phi_T\Gamma_T$, so that the elements of $G$ does not involve any nontrivial diagonal automorphism. In particular, $G\cap InnDiag(T)=T$ and $G/T\cong G\cap \Phi_T\Gamma_T$. In this situation, an element of $G$ can be written as $ifg$ as above. 

Since $\Phi_T\Gamma_T$ is either cyclic or $\Phi_T\times\Gamma_T$, the elements of order dividing $4$ form a subgroup $K$ in $G/T$. Let $\tilde{K}$ be the inverse image of $K$ in $G$. Then every element of $G\setminus \tilde{K}$ has order not dividing $4$, so $\tilde{K}$ satisfies the condition of \autoref{self-normalising}. By the minimality of $G$, we must have $\tilde{K}=G$, so that every element of $G/T$ must have order dividing $4$. In other words, $G\cap \Phi_T\Gamma_T$ has exponent dividing $4$. Hence we may assume that $G\cap \Phi_T\Gamma_T \leq P$.

Following \cite[Section 4]{GMN}, let $R$ be any maximal parabolic subgroup of $T$ containing $Q$ which is normalised by $(\Phi_T\Gamma_T)\cap G$, which exists except when $T$ is one of the following: $\PSL_n(q)$ with odd $n$, $\PSp_4(q)$ and $F_4(q)$, cf. \cite[Section 2.6]{GLS} (see also \cite{KL, BHR, Cr}). Then $\mathbf{N}_G(R)$ contains $P$, so $\rho_4(G)\leq \rho_4(\mathbf{N}_G(R))$ by \autoref{self-normalising}. Since $\mathbf{N}_G(R)$ is proper in $G$, by the minimality of $G$, $\mathbf{N}_G(R)$ must be soluble. By \cite[Lemma 5.4]{Bur}, $T$ must be one of the following (including all exceptions we made above): 
\begin{itemize}
\item $\PSL_n(q)$ with $n$ odd. We may assume that $Q$ is the upper unitriangular subgroup of $\PSL_n(q)$, $\Phi_T$ is the standard field automorphisms acting as entry-wise Galois automorphisms, and that the nontrivial graph automorphism $g\in \Gamma_T$ maps a matrix $X$ to $A(X^T)^{-1}A^{-1}$ where $A$ is the matrix $$\begin{pmatrix}
& & & & -1 \\ & & & 1 & \\ & & -1 & & \\ & \Ddots  & & & \\ (-1)^{n} & & & &
\end{pmatrix}$$ as described in \cite[Section 2.7]{GLS}. Then $(\Phi_T\Gamma_T)\cap G$ normalises the parabolic subgroup $S$ which stabilizes a flag of dimensions $(1, n-1,n)$, i.e., the block upper triangular subgroup of block sizes $1$, $n-2$, $1$. Therefore, if $n\geq 5$, then $\mathbf{N}_G(S)$ for this $S$ is a non-soluble proper subgroup of $G$ containing $P$. By \autoref{self-normalising}, this contradicts the minimality of $G$. Therefore $n$ must be $3$.

Suppose that $q\geq 2^5$. Then there exists an element $a\in \mathbb{F}_q\setminus \mathbb{F}_2$ which is fixed by all elements of $\Phi_T$ of order dividing $4$. The image $x\in \PSL_3(q)$ of the matrix $$\begin{pmatrix}
a & & \\ & 1 & \\ & & a^{-1}
\end{pmatrix}$$ is a nonidentity element that commutes with all elements of $(\Phi_T\Gamma_T)\cap G$ and also normalises $Q$. Therefore $x$ normalises $P$ and $x\notin P$, so $P$ is not self-normalizing. Since $P$ is a Sylow $2$-subgroup of $G$, $|\mathbf{N}_G(P):P|$ is not divisible by $2$, so it follows that $|\mathbf{N}_G(P):P|\geq 3$. By \autoref{self-normalising} and the assumption $\rho_4(G)>13/30$, this is impossible, so we get $q\leq 2^4$. Therefore $G$ must be one of the following: $\PSL_3(2), \PSL_3(4), \PSL_3(8), \PSL_3(16),$ and their semidirect products with subgroups of $\Gal(\mathbb{F}_q/\mathbb{F}_2)\times \Gamma_T$. 
\item $\PSL_2(q)$. Here we have $\Gamma_T=1$. We may again assume that $\Phi_T$ consists of the standard field automorphisms acting by entry-wise Galois automorphisms, and that $Q$ is the upper unitriangular subgroup of $T$ and $P \leq Q\rtimes \Phi_T$. The upper triangular (Borel) subgroup $B<\PSL_2(q)$ normalises $Q$ and is normalised by $\Phi_T$, so $\mathbf{N}_G(Q)$ is a proper subgroup of $G$ which contains both $B$ and $P$. Unless $q\leq 16$, the Galois automorphisms of order dividing $4$ fixes some element $a\in \mathbb{F}_q\setminus \mathbb{F}_2$, so that the image of $$\begin{pmatrix}
a & 0 \\ 0 & a^{-1}
\end{pmatrix}$$ in $\PSL_n(q)$ is a nonidentity element that normalises $P$. Again by \autoref{self-normalising} this is impossible. Therefore $q\leq 16$. 
\item $\PSL_n(2)$ with $n\leq 5$. There are $6$ possible $G$ in this case: $\PSL_3(2)$, $\PGL_2(7)$, $\PSL_4(2)\cong A_8$, $S_8$, $\PSL_5(2)$, $\Aut(\PSL_5(2))$. 
\item $\PSU_3(q)$ with $q\geq 4$. In this case $\Gamma_T=1$, so if there is an element $x\in \mathbf{N}_T(Q)\setminus Q$ with entries fixed by the elements of $\Phi_T$ of order dividing $4$, then $x$ normalises $P$ so $P$ is not self-normalizing, which forces $|\mathbf{N}_G(P):P|\geq 3$. We may assume that $Q$ is upper triangular; then any non-central diagonal matrix with entries fixed by all elements of $\Phi_T$ of order dividing 4 can be used as $x$. Therefore, to have a self-normalizing $P$, we must have $q\leq 4$. Since $q\geq 4$, we get $q=4$ and $G=\PSU_3(4)$ or $\PSigmaU_3(4)$. 
\item $\PSU_n(2)$ with $n=4, 5$. There are $4$ possible $G$ in this case: $\PSU_4(2)$, $\PSigmaU_4(2)$, $\PSU_5(2)$, $\PSigmaU_5(2)$. 
\item $\PSp_4(q)$. Recall that we defined $e=\log_2(q)$. Then $\Phi_T\Gamma_T\cong C_{2e}$ and $|\Phi_T\Gamma_T:\Phi_T|=2$. Since the exponent of $G\cap \Phi_T\Gamma_T$ divides $4$, we must have $G/T\cong (G\cap \Phi_T\Gamma_T)\cong C_1, C_2$ or $C_4$. 

By \cite[Proposition 7.2.5, Table 8.14]{BHR}, $P$ is contained in a maximal subgroup $M$ of $G$ such that either $M$ has $\GL_2(q)$ as a composition factor, or $G\not< T\rtimes \Phi_T$ and $M= \mathbf{N}_G(B)$ for the Borel subgroup $B=\mathbf{N}_T(Q)\cong Q\rtimes C_{q-1}^2$ of $T$. By the minimality of $G$, $M$ must be soluble, so the former cannot happen unless $q=2$. 

Suppose that $q>2$ so that the latter happens. By \autoref{self-normalising}, $\rho_4(M)\geq \rho_4(G)>7/15$. Note that $MT=G$, so $|M:B|=|G:T|=|P:Q|$, hence $M=BP=(Q\rtimes C_{q-1}^2)\rtimes (G\cap \Phi_T\Gamma_T)$. Since $Q$ is normal in $M$, by \autoref{quo}, $\rho_4(M/Q)\geq \rho_4(M)>7/15$. According to \cite[Proposition 7.2.5]{BHR}, $G\cap \Phi_T\Gamma_T$ in fact normalises $C_{q-1}^2$, so $M/Q\cong C_{q-1}^2\rtimes (G\cap \Phi_T\Gamma_T)$. The action is described in \cite[Lemma 7.2.2]{BHR}; for convenience we describe it here for a generator $\gamma$ of $G\cap \Phi_T\Gamma_T$. Recall that $e=\log_2(q)$. We may write $e=2n+1$ if it is odd, and $e=4n+2$ if it is even. Then for $(x,y)\in C_{q-1}^2$, $$\gamma(x,y)\mapsto (x^{2^n}y^{2^n}, x^{2^n}y^{-2^n}).$$ Therefore, \begin{align}\label{4thpower}((x,y), \gamma)^4=& ((x^{2^n+1}y^{2^n}, x^{2^n}y^{1-2^n}), \gamma^2)^2\nonumber\\=& ((x^{(2^n+1)(1+2^{2n+1})}y^{2^n(1+2^{2n+1})}, x^{2^n(1+2^{2n+1})}y^{(1-2^n)(1+2^{2n+1})}), 1).
\end{align}
If $e$ is odd, then $x^{2^{2n+1}}=x^q=x$ for all $x\in C_{q-1}$, so the above expression becomes $$ ((x^{2^{n+1}+2}y^{2^{n+1}}, x^{2^{n+1}}y^{2-2^{n+1}}), 1).$$ This is the identity if and only if $x^{2^{n+1}+2}y^{2^{n+1}}=x^{2^{n+1}}y^{2-2^{n+1}}=1$, which implies $x^2 = y^{2-2^{n+2}}$. Since $q-1=2^{2n+1}-1$ is relatively prime to both $2$ and $2-2^{n+2}$, for each $x\in C_{q-1}$ there exists at most one $y\in C_{q-1}$ such that $((x,y),\gamma)^4=1$. Therefore, $\rho_4((B/Q)\gamma)\leq 1/(q-1)$ and $$\rho_4(M/Q)=\rho_4((B/Q)\sqcup (B/Q)\gamma) = \frac{\rho_4(B)+\rho_4((B/Q)\gamma)}{2} \leq \frac{1/(q-1)^2 + 1/(q-1)}{2}$$ which is less than $7/15$ when $q> 3$.  

If $e$ is even, then \eqref{4thpower} gives the identity if and only if $$x^{(2^n+1)(1+2^{2n+1})}y^{2^n(1+2^{2n+1})}= x^{2^n(1+2^{2n+1})}y^{(1-2^n)(1+2^{2n+1})}=1$$ which implies $x^{1+2^{2n+1}} = y^{(1-2^{n+1})(1+2^{2n+1})}$. Therefore for each $y\in C_{q-1}$, there exists at most $2^{2n+1}+1$ elements $x\in C_{q-1}$ such that $((x,y),\gamma)^4=1$. Therefore, $\rho_4(B\gamma)\leq (2^{2n+1}+1)/(q-1) = 1/(2^{2n+1}-1)$. The same bound applies to $B\gamma^3$, so \begin{align*}
\rho_4(M/Q) \leq& \frac{1}{4}\left(\rho_4(B/Q) + \rho_4((B/Q)\gamma) + \rho_4((B/Q)\gamma^2)+\rho_4((B/Q)\gamma^3)\right)\\\leq&\frac{1}{4(q-1)^2} + \frac{1}{2(2^{2n+1}-1)}+\frac{1}{4}  
\end{align*}
which is less than $7/15$ when $q>4$. Therefore, the only remaining cases are $q=2$ and $4$. 
\item $\PSp_6(2)$. In this case $T=\Aut(T)$, so $G=\PSp_6(2)$. 
\item $\POmega_8^+(2)$. In this case, there are no diagonal or field automorphisms, and as mentioned above, at most one nontrivial graph automorphism can appear and it must have order $2$. Therefore $G$ is either $\POmega_8^+(2)$ or $\PSO_8^+(2)$. 
\item $F_4(q)$. There is a subgroup $H=[q^{20}].\Sp_4(q).(q-1)^2$ in $F_4(q)$, whose conjugacy class is stabilized by $\Phi_T\Gamma_T$, cf. \cite[Table 8]{Cr}. Here $[q^{20}]$ denotes some group of order $q^{20}$. Since $|H|_2=|F_4(q)|_2$, by \autoref{normalise}, $\mathbf{N}_G(H)$ is a proper subgroup of $G$ that contains a Sylow $2$-subgroup of $G$. By \autoref{self-normalising}, $\rho_4(G)\leq \rho_4(\mathbf{N}_G(H))$. Therefore, by the minimality of $G$, $\mathbf{N}_G(H)$ must be soluble. Since $H$ has a composition factor $\Sp_4(q)$, $q$ must be $2$. 
\item $G_2(2)'\cong \PSU_3(3)$. In this case, $G$ is either $G_2(2)'\cong \PSU_3(3)$ or $G_2(2)\cong\PSigmaU_3(3)$. 
\item $\vphantom{a}^2B_2(q)$ with $q\geq 8$. In this case, $q$ is an odd power of $2$, so the elements of $G$ do not involve any field automorphism. Since $\Aut(\vphantom{a}^2B_2(q))$ has no diagonal and graph automorphism, the only possibility is $G=T\cong \vphantom{a}^2B_2(q)$. This was already studied in \autoref{Suzuki}: $\rho_4(\vphantom{a}^2B_2(q)) = q^4/(q^2(q-1)(q^2+1)) < 1/(q-1)<7/15$, so this is impossible.
\item $\vphantom{a}^2F_4(2)'$. In this case, $G$ is either $\vphantom{a}^2F_4(2)'$ or $\vphantom{a}^2F_4(2)$. 
\end{itemize}
For each of the possible $G$ found above, we computed $\rho_4(G)$ in \autoref{4equichartable}. Some of them were computed only for the largest subgroup in $TS$ for a Sylow $2$-subgroup of $\Aut(T)$. Since $G=TP\leq TS$, we have $\rho_4(G) \leq |TS:G|\rho_4(TS)\leq |TS:T|\rho_4(TS)$, so if $\rho_4(TS)\leq 7/(15|TS:T|)$ then $\rho_4(G)\leq 7/15$. From the table we can see that $\rho_4(G)>7/15$ only happens when $G \cong S_5$ or $M_{10}$, and $\rho_4(G)>7/15$ never happens.
\end{proof}

\begin{table}[h]
\begin{center}
\caption{$\rho_4(G)$ for $G$ of Lie type in even characteristic in the proof of \autoref{4_equichar}.}\label{4equichartable}
{\small
\begin{tabular}{|c|c|c|}
\hline
$T$ & $G$ & $\rho_4(G)$\\
\hline
\multirow{9}{*}{$\PSL_n(q)$, $n$ odd} & $\Aut(\PSL_3(16))$ & $\frac{112601}{26732160}=0.0042\dots < \frac{7}{15|\Out(\PSL_3(16))|}=7/60$ \\
\cline{2-3}
& $\Aut(\PSL_3(8))$ & $ \frac{4607}{1545264}=0.0029\dots < \frac{7}{15|\Out(\PSL_3(8))|} = 7/90$\\
\cline{2-3}
& $\PSL_3(4)\rtimes \Phi_T\Gamma_T$ & $617/5040=0.122\dots$\\
\cline{2-3}
& Intermediate groups & $421/2520=0.167\dots $\\
\cline{3-3}
& between $\PSL_3(4)$ and& $277/2520=0.109\dots $\\
\cline{3-3}
&  $\PSL_3(4)\rtimes \Phi_T\Gamma_T$  & $431/2520=0.171\dots$\\
\cline{2-3}
& $\PSL_3(4)$ & $64/315=0.203\dots $\\
\cline{2-3}
& $\PSL_3(2)$ & $8/21=0.380\dots$\\
\cline{2-3}
& $\Aut(\PSL_3(2))$&$23/84=0.273\dots$\\
\hline
\multirow{6}{*}{$\PSL_2(q)$} & $\PSL_2(4)\cong A_5$ & $4/15=0.266\dots$\\
\cline{2-3}
& $\PSigmaL_2(4)\cong S_5$ & $7/15=0.466$\\
\cline{2-3}
& $\PSL_2(8)$ & $8/63=0.126\dots$\\
\cline{2-3}
& $\PSL_2(16)$ & $16/255=0.062\dots$\\
\cline{2-3}
& $\PSL_2(16)\rtimes \Gal(\mathbb{F}_{16}/\mathbb{F}_{4})$ & $14/85=0.164\dots$\\
\cline{2-3}
& $\PSigmaL_2(16)$ & $169/1020=0.165\dots$\\
\hline
\multirow{6}{*}{$\PSL_n(2)$, $n\leq 5$} &  $\PSL_3(2)$&$ 8/21=0.380\dots$\\
\cline{2-3}
& $\PGL_2(7)$ & $23/84=0.273\dots$\\
\cline{2-3}
& $A_8$&$64/315=0.203\dots$\\
\cline{2-3}
& $S_8$&$389/2520=0.154\dots$\\
\cline{2-3}
& $\PSL_5(2)$&$6619/156240 = 0.042\dots$\\
\cline{2-3}
& $\Aut(\PSL_5(2))$&$10091/312480 = 0.032\dots$\\
\hline
\multirow{2}{*}{$\PSU_3(q)$, $q\geq 4$} & $\PSU_3(4)$&$64/975=0.065\dots$\\
\cline{2-3}
& $\PSigmaU_3(4)$&$1621/15600=0.103\dots$\\
\hline
\multirow{4}{*}{$\PSU_n(2)$, $n=4,5$} & $\PSU_4(2)$&$64/405 = 0.158$\\
\cline{2-3}
& $\PSigmaU_4(2)$&$ 427/3240 = 0.131$\\
\cline{2-3}
& $\PSU_5(2)$&$3019/213840 = 0.014\dots$\\
\cline{2-3}
& $\PSigmaU_5(2)$&$ 7771/427680 = 0.018\dots$\\
\hline
\multirow{6}{*}{$\PSp_4(q)$} & $\PSp_4(2)'\cong A_6$ & $17/45=0.377\dots$\\
\cline{2-3}
&  $\PSp_4(2)\cong S_6$&$ 16/45=0.355\dots$\\
\cline{2-3}
& Mathieu group $M_{10}$ & $79/180=0.438\dots$\\
\cline{2-3}
& $\PGL_2(9)$ & $43/180=0.238\dots$\\
\cline{2-3}
& $\Aut(\PSp_4(2)')$&$59/180=0.327\dots$\\
\cline{2-3} 
& $\Aut(\PSp_4(4))$&$\frac{4169}{61200}=0.068\dots<\frac{7}{15|\Out(\PSp_4(4))|}=7/60$\\
\hline
$\PSp_6(2)$ & $\PSp_6(2)$ & $1261/22680=0.055\dots$\\
\hline
\multirow{2}{*}{$\POmega_8^+(2)$} & $\POmega_8^+(2)$ & $23011/680400 = 0.0338\dots$\\
\cline{2-3}
& $\PSO_8^+(2)$&$ 32491/1360800 = 0.0238\dots$\\
\hline
\multirow{2}{*}{$F_4(q)$} & $F_4(2)$ & $\frac{18865921}{50523782400}=0.00037\dots $ \\
\cline{2-3}
&$\Aut(F_4(2))$ & $ \frac{9445213}{25261891200}=0.00037\dots$\\
\hline
\multirow{2}{*}{$G_2(2)'\cong \PSU_3(3)$} &$\PSU_3(3)$ & $ 71/756 = 0.0939\dots $\\
\cline{2-3}
&$\PSigmaU_3(3)$ & $67/756=0.0886\dots$\\
\hline
\multirow{2}{*}{$\vphantom{a}^2F_4(2)$} & $\vphantom{a}^2F_4(2)'$ &$4127/140400 = 0.02939\dots$\\
\cline{2-3}
& $\vphantom{a}^2F_4(2)$ & $4111/140400 = 0.02928\dots$\\
\hline
\end{tabular}}
\end{center}
\end{table}

\begin{thm}\label{4_crosschar}
Let $G$ be an almost simple group such that $T=\soc(G)$ is a simple group of Lie type in odd characteristic. Then $\rho_4(G)\leq 7/15$. 
\end{thm}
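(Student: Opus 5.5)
We argue by minimal counterexample, as in the proof of \autoref{4_equichar}. Suppose $G$ is almost simple with $T=\soc(G)$ of Lie type over $\mathbb{F}_q$, $q=p^e$ with $p$ odd, that $\rho_4(G)>7/15$, and that $\rho_4(G)>\rho_4(K)$ for every non-soluble $K$ with $|K|<|G|$. Then every proper subgroup of $G$ meeting every conjugacy class of elements of order dividing $4$ is soluble, self-normalising and has $\rho_4>7/15$ (\autoref{self-normalising}). By \autoref{Sylow_quotient} we may assume $G=TP$ with $P$ a Sylow $2$-subgroup of $G$. Since $\rho_4(G/T)>7/15$, more than half of the elements of the $2$-group $G/T$ have order dividing $4$.

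The first key step is to note that $P$ is self-normalising in $G$. If it were not, $|\mathbf{N}_G(P):P|$ would be odd and at least $3$, and \autoref{self-normalising} would give $\rho_4(G)<1/3$. By \cite[Theorem 1.1]{GMN} (for $p=2$ as the Sylow prime), a Sylow $2$-subgroup of an almost simple group in odd characteristic is self-normalising only in a restricted list. Roughly, this requires $q\equiv\pm 3 \pmod 8$, and otherwise $q$ must be very small relative to the field automorphisms present in $G$. Concretely, $\mathbf{N}_T(Q)/Q$ for $Q\in\mathrm{Syl}_2(T)$ contains odd-order torus elements, for instance from the split torus in $\PSL_2(q)$ of order $(q\pm1)/2$. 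Such an element can be chosen to commute with the field automorphisms of $2$-power order in $P$, and it then normalises $P$. This is the same trick used in \autoref{Prime_power} and \autoref{4_equichar}. Applying it reduces us to $q=p$ or $q=p^{2^j}$ with $p\equiv\pm3\pmod 8$, with $G$ containing field automorphisms of order $2^j$.

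The second step uses Sylow-containing subgroups, much as the bullet list in \autoref{4_equichar} does. For classical $T$ of rank at least $2$, $P$ lies in the normaliser of a subgroup whose $T$-class is $\Aut(T)$-stable and has odd index. Examples are the stabiliser of a decomposition $V=V_1\perp V_2$ into nondegenerate pieces, or a subgroup of type $\GL_1(q)\wr S_n$ or $\mathrm{O}_1(q)\wr S_n$ (cf.\ \cite{KL, BHR}). By \autoref{normalise} its normaliser in $G$ contains a Sylow $2$-subgroup. By minimality, this proper subgroup must be soluble, so the nonabelian factors such as $\PSL_2(q)$ of its blocks cannot appear. This bounds the rank and forces $q\leq 5$ or $q\in\{3,9\}$. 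Exceptional groups are handled similarly via subgroups like $\SL_2(q)\circ\ldots$ or maximal-rank subgroups of odd index, as in \cite[Table 8]{Cr}, except for a few tiny cases. What remains is $\PSL_2(q)$ and a finite list of small groups: $\PSL_2(q)$ with $q$ as above, $\PSL_3(3)$, $\PSU_3(3)$, $\PSp_4(3)$, $\PSL_4(3)$, $G_2(3)$, $^2G_2(3)'$ and so on, together with their extensions by $2$-groups of outer automorphisms.

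For $\PSL_2(q)$ we compute directly using \autoref{PSL} for the inner part. For a coset $xT$ with $x$ a field or diagonal-field automorphism of order $2$ or $4$, we count $g$ with $(xg)^4=1$, i.e.\ $g g^x g^{x^2} g^{x^3}=1$, as in the $\alpha(1,a)$ computation in \autoref{Prime_power}. \autoref{k=2_Coset}(2) bounds the order-$2$ cosets by $4/15$, and for the order-$4$ cosets we expect a bound of roughly $O(1/q^{1/2})$ via a diagonal-torus orbit argument, giving an average below $7/15$. The finitely many small cases, including $A_6\cong\PSL_2(9)$ with $M_{10}$, $\PGL_2(9)$ and $\Aut(A_6)$, already in \autoref{4equichartable}, are checked from character tables and \cite{Atlas}, with the maximum $\rho_4=79/180<7/15$ attained by $M_{10}$.

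The main obstacle is the first step: making the normaliser argument uniform across all Lie types, twisted cases and graph automorphisms, and precisely identifying the exceptions from \cite{GMN}. I would lean on \cite[Theorem 1.1]{GMN} directly rather than re-derive it.
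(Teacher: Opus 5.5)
\textbf{There is a genuine gap.} Your deduction that $P$ is self-normalising is fine, since otherwise $|\mathbf{N}_G(P):P|\ge 3$ and $\rho_4(G)\le 1/3$. The use you then make of it fails.

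\textbf{The torus trick does not transfer.} It works in \autoref{Prime_power} and \autoref{4_equichar} because the Sylow subgroup there is in the defining characteristic. It is therefore unipotent and normalised by the diagonal torus. For $p=2$ in odd characteristic, the Sylow $2$-subgroup $Q$ instead lies in the normaliser of a torus and contains Weyl-type involutions $w$. For an odd-order torus element $t$ we get $twt^{-1}=t^{2}w\notin Q$ unless $t=1$, so no such $t$ normalises $Q$.

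\textbf{The Guralnick--Malle--Navarro theorem does not apply.} It concerns odd primes only and says nothing about $p=2$. In odd characteristic, self-normalising Sylow $2$-subgroups are in fact the typical case:
\begin{itemize}
\item in $\PSL_2(q)$ with $q\equiv\pm1\pmod 8$, and in every $\PGL_2(q)$, the Sylow $2$-subgroup is dihedral of order at least $8$ and self-normalising (e.g.\ $D_8$ in $S_5\cong\PGL_2(5)$);
\item the non-self-normalising case is $q\equiv\pm3\pmod 8$, with normaliser $A_4$ in $\PSL_2(q)$.
\end{itemize}
This is the reverse of what you claim. So the reduction to $q=p$ or $q=p^{2^j}$ with $p\equiv\pm3\pmod 8$ has no basis. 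The paper remarks at the start of its $\epsilon_4$ section that the normaliser argument is weak for $p=2$ for exactly this reason.

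\textbf{Your plan for $\PSL_2(q)$ does not hold up.} This case now has to be handled for all $q$, and it contains the extremal example.
\begin{itemize}
\item \autoref{k=2_Coset}(2) bounds $\rho_2(xT)$, not $\rho_4(xT)$. If $x$ has order $2$ modulo $T$, then $(xg)^4=1$ only requires $(xg)^2$ to have order dividing $2$, which that lemma does not control. For instance, $\rho_4(S_5\setminus A_5)=40/60=2/3$, and $\rho_4(M_{10}\setminus A_6)=180/360=1/2$ even though $\rho_2(M_{10}\setminus A_6)=0$.
\item Your bound for the order-$4$ cosets is only ``expected'', not proved.
\item The maximum is not $79/180$. $\PGL_2(5)\cong S_5$ has socle of Lie type in characteristic $5$ and $\rho_4=7/15$ exactly. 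The inequality is therefore sharp in this class, and $q=5$ must survive into your final check.
\end{itemize}

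\textbf{What the paper does instead for $\PSL_2(q)$.} It uses explicit counting. For $q\equiv1\pmod4$, take the dihedral normaliser $D$ of the split torus in $\PGL_2(q)$, extended by the field automorphism $f$. If $\PGL_2(q)\not\le G$, use $M\langle df\rangle$ instead, with $M=D\cap\PSL_2(q)$. This subgroup contains a Sylow $2$-subgroup of $G$, so \autoref{self-normalising} gives $\rho_4(G)\le\rho_4(D\langle f\rangle)$. One then counts the elements $\diag(1,x)f^r$ and $\adiag(1,x)f^r$ whose fourth power is trivial. This gives bounds below $7/15$ except for $q\in\{5,9,25,49,81\}$, which are computed directly. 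For $q\equiv 3\pmod 4$, $G\le\PGL_2(q)$ and is counted directly.

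\textbf{Your second step needs verification.} It follows the same idea as the paper for the other types, but as written it is not checked. For example, $\GL_1(q)\wr S_n$ contains a Sylow $2$-subgroup only when $q\equiv 1\pmod 4$, and it gives no restriction on $q$ when $n\le 4$. The paper proceeds as follows:
\begin{itemize}
\item it bounds the rank using Aschbacher's subgroups $R_0\unlhd R$, with $R/R_0$ involving $S_n$, $C_2^nS_n$ or $\PSL_3(2)$;
\item it uses Burness's list of soluble maximal subgroups;
\item it then chooses specific subgroups containing a Sylow $2$-subgroup with $\Aut(T)$-stable class. These are block-diagonal subgroups with blocks of sizes $8,4,2,1$, or $\GL_{n/2}(q)\wr C_2$-type subgroups for $n=4,8$, together with analogous choices for the exceptional groups.
\end{itemize}

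\textbf{Minor point.} $\rho_4(G/T)>7/15$ does not give ``more than half'', since $7/15<1/2$.
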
 

\begin{proof}
Suppose that $\rho_4(G)>7/15$ and $\rho_4(G)>\rho_4(K)$ for all non-soluble $K$ with $|K|<|G|$. As in the previous theorem, we will use the notation $InnDiag(T)$, $\Phi_T$ and $\Gamma_T$ to mean the group of inner-diagonal automorphisms and the (standard) groups of field and graph automorphisms. By \autoref{Sylow_quotient} we may assume that $G/T$ is a Sylow $2$-subgroup of $\Out(T)$.

Let $P$ be a Sylow $2$-subgroup of $G$ and $Q=P\cap T$, so that $G=TP$. Then by \cite[Theorem 2]{AsOdd} (see also \cite[Theorem 4.10.6]{GLS}), if $T$ is not one of $\PSL_2(q),\ ^3D_4(q), E_6(q),\ ^2E_6(q), F_4(q), G_2(q)$ or $^2G_2(q)$, then there exist certain subgroups $R_0\unlhd R < G$ such that $P\leq R$ and $R/R_0$ satisfies the following: 
\begin{enumerate}[label={\upshape(\alph*)}]
\item If $T\cong \PSL_{2n}(q), \PSL_{2n+1}(q), \PSU_{2n}(q), \PSU_{2n+1}(q), $or $\PSp_{2n}(q)$, then $R/R_0\cong S_n$.
\item If $T\cong \POmega_{4n+1}(q),\POmega^{\pm}_{4n+2}(q), \POmega_{4n+3}(q)$, or $\POmega_{4n+4}^-(q)$, then $R/R_0$ has a normal subgroup isomorphic to $C_2^nS_n$.
\item If $T\cong \POmega_{4n}^+(q)$, then $R/R_0$ has a normal subgroup isomorphic to $C_2^{n-1}S_n$.
\item If $T\cong E_7(q)$, then $R/R_0$ has a normal subgroup isomorphic to $\PSL_3(2)$.
\item If $T\cong E_8(q)$, then $R/R_0$ has a normal subgroup isomorphic to $C_2^3\PSL_3(2)$.
\end{enumerate}
Since $R/R_0$ has a normal subgroup which has no composition factor isomorphic to $T$, $R/R_0\not \cong G$ and in particular $|R/R_0|<|G|$. By \autoref{self-normalising} and \autoref{quo}, we have $\rho_4(G)\leq \rho_4(R)\leq \rho_4(R/R_0)$. Since $G$ is assumed to have strictly larger $\rho_4$ than any non-soluble group of smaller order, $R/R_0$ must be soluble. Therefore, in cases (a)-(c), we must have $n\leq 4$, and cases (d) and (e) cannot happen. 

In addition to this, we again use \cite[Lemmas 5.4 and 6.2]{Bur}, which describe the possible soluble maximal subgroups of almost simple classical groups. By \autoref{self-normalising} and the minimality of $G$, every maximal subgroup of $G$ that contains $P$ must be soluble. The remaining possibilities of $T$ are the following.
\begin{itemize}
\item $\vphantom{a}^2G_2(q)$. Since $\Out(\vphantom{a}^2G_2(q))$ has odd order, by \autoref{Sylow_quotient} we get $\rho_4(G)\leq \rho_4(T)=\rho_4(\vphantom{a}^2G_2(q))$. A Sylow $2$-subgroup $P_0$ of $\vphantom{a}^2G_2$ has $|N_T(P_0):P_0|=21$ by \cite[Theorem 6]{KM}, so by \autoref{self-normalising}, $\rho_4(G)\leq 1/21$.
\end{itemize}
For the following groups, we will present, or show the existence of, a non-soluble subgroup satisfying the condition of \autoref{self-normalising}, thereby breaking the minimality of $G$. 
\begin{itemize}
\item $F_4(q)$. In this case $T$ has no diagonal or graph automorphisms. The field automorphisms $\Phi$ stabilize the $T$-conjugacy class of subgroup of the form $H=2\cdot \Omega_9(q)$; see \cite[Table 7]{Cr}. Since $|F_4(q)|_2 = |2\cdot \Omega_9(q)|_2$, by \autoref{normalise}, the proper non-soluble subgroup $\mathbf{N}_G(H)$ contains a Sylow $2$-subgroup of $G$. (This argument using \autoref{normalise} will be used in most of the other cases below in the same manner, so we will not mention this again for brevity.)
\item $G_2(q)$. By \cite[Theorems A and B]{Kl-G2}, there exists a maximal subgroup $\mathbf{N}_G(H)$ where $H<T$ is the centraliser of an involution in $T$, isomorphic to $(\SL_2(q)\circ \SL_2(q))\cdot C_2$, where $\circ$ denotes a central product. Since $|G_2(q)|_2 = |(\SL_2(q)\circ \SL_2(q))\cdot C_2|_2$, $H$ contains a Sylow $2$-subgroup of $G_2(q)$. Also, $G_2(q)$ has only one class of involutions, so $\Aut(T)$ stabilizes the $T$-conjugacy class of $H$. By the argument mentioned above, $\mathbf{N}_G(H)$ must be soluble, which forces $q=3$.  
\item $E_6^\pm(q)$. Here we have no diagonal automorphism of even order. By \cite[Tables 9, 10]{Cr}, there exists a non-soluble subgroup $H<T$ isomorphic to $4.(\POmega_8^+(q)\times ((q\mp 1)/2)^2/\gcd(3,q-1)).4.S_3$, whose $T$-conjugacy class is stabilized by $\Phi_T\Gamma_T$. Note that $|H|_2 = |E_6^\pm(q)|_2$, so $H$ contains a Sylow $2$-subgroup of $T$. 
\item $\vphantom{a}^3D_4(q)$. In this case there is no diagonal and graph automorphisms. By \cite[Table 8.51]{BHR}, There exists a subgroup $H<T$ isomorphic to $G_2(q)$ whose $T$-conjugacy class is stabilized by $\Phi_T$. Since $|\vphantom{a}^3D_4(q)|_2 = |G_2(q)|_2$, this contains a Sylow $2$-subgroup. $H$ is non-soluble, so this case is impossible.
\item $\PSL_n(q)$ with $3\leq n\leq 9$. First suppose that $n\neq 4, 8$. Let $a_1,a_2,a_3,a_4\in \{0,1\}$ be defined by $n=8a_1+4a_2+2a_3+a_4$. Consider the block diagonal subgroup $$B=\left(\SL_n(q)\cap \left(\GL_8(q)^{a_1}\times\GL_4(q)^{a_2}\times \GL_2(q)^{a_3}\times \GL_1(q)^{a_4}\right)\right)/\mathbf{Z}(\SL_n(q)) < \PSL_n(q).$$ (One can also use a larger reducible subgroup containing this, i.e. an appropriate subgroup in Aschbacher's class $\mathcal{C}_1$.) Note that $B$ is normalised by the conjugation by diagonal matrices, the entry-wise field automorphisms, and the inverse-transpose graph automorphism, and that $B$ contains a Sylow $2$-subgroup of $T$. By \autoref{normalise}, $\mathbf{N}_G(B)$ is a proper subgroup of $G$ which contains a Sylow $2$-subgroup of $G$. Therefore $B$ must be soluble, which happens only when $n=q=3$. 

If $n=4$ or $8$, consider the subgroup (of Aschbacher's class $\mathcal{C}_2$, ``imprimitive subgroups'')\begin{align*}
W &= \left(\SL_n(q)\cap (\GL_{n/2}(q)\wr C_2)\right)/\mathbf{Z}(\SL_n(q)) \\&= \left(\SL_n(q)\cap \langle \GL_{n/2}(q)\times \GL_{n/2}(q), \tau\rangle\right)/\mathbf{Z}(\SL_n(q))<\PSL_n(q)\end{align*}
where $\tau = \begin{pmatrix}
0 & I_{n/2} \\ I_{n/2} & 0
\end{pmatrix}.$ Again, $W$ is normalised by the conjugation by diagonal matrices, the entry-wise field automorphisms, and the inverse-transpose graph automorphism, and contains a Sylow $2$-subgroup of $T$. Therefore $W$ must be soluble, which happens only when $n=4$ and $q=3$.
\item $\PSU_n(q)$ with $3\leq n\leq 9$. As in the case of $\PSL_n(q)$, we can use the subgroup $$B=\left(\SU_n(q)\cap \left(\GU_8(q)^{a_1}\times\GU_4(q)^{a_2}\times \GU_2(q)^{a_3}\times \GU_1(q)^{a_4}\right)\right)/\mathbf{Z}(\SU_n(q)) < \PSU_n(q)$$ if $n\neq 4, 8$, and \begin{align*}
W = \left(\SU_n(q)\cap (\GU_{n/2}(q)\wr C_2)\right)/\mathbf{Z}(\SU_n(q)) <\PSU_n(q)\end{align*} if $n=4,8$. Each of these groups contains a Sylow $2$-subgroup of $T$, and their $T$-conjugacy classes are stabilized by the automorphisms of $T$, cf. \cite[Table 3.5.B]{KL}. These are soluble only when $(n,q)= (3,3)$ or $(4,3)$.
\item $\PSp_{2n}(q)$ with $2\leq n\leq 4$. Once again we use the same construction: $$B=\left(\Sp_6(q)\cap \left(\Sp_4(q)\times\Sp_2(q)\right)\right)/\mathbf{Z}(\Sp_6(q)) < \PSp_6(q)$$ if $n=3$, and \begin{align*}
W = \left(\Sp_{2n}(q)\cap (\Sp_{n}(q)\wr C_2)\right)/\mathbf{Z}(\Sp_{2n}(q)) <\PSp_{2n}(q)\end{align*} if $n=2,4$. Each of them contain a Sylow $2$-subgroup, and the conjugacy class is stabilized by the automorphisms, cf. \cite[Table 3.5.C]{KL}. The only soluble case is $(n,q)=(2,3)$. 
\item $\POmega_{2n+1}(q)$ with $3\leq n\leq 9$. By the aforementioned results in \cite{Bur}, the only case with a soluble maximal subgroup in this case is $\POmega_7(3)$. 
\item $\POmega_{2n}^+(q)$ with $4\leq n\leq 9$. By \cite{Bur}, the only cases with a soluble maximal subgroup are $n=4$ and $(n,q)=(6,3)$, $(8,3)$. When $n=4$, the subgroup $\Omega_4^+(q)^2.[4].C_2/\mathbf{Z}(\Omega_8^+(q)) < \POmega_8^+(q)$ contains a Sylow $2$-subgroup, and its $T$-conjugacy class is stabilized by all standard automorphisms, cf. \cite[Table 8.50]{BHR}. The only soluble case is $(n,q)=(4,3)$. Note that $G$ does not involve the triality automorphism.
\item $\POmega_{2n}^-(q)$ with $4\leq n\leq 10$. By \cite{Bur}, there is no soluble maximal subgroup of $G$.
\end{itemize}
We have one more type of groups, where the above arguments don't work well. We directly count the number of elements of order dividing $4$ in these groups:
\begin{itemize}
\item $\PSL_2(q)$. Here we have no graph automorphism, so $\Aut(T)=\PGammaL_2(q)$. We may assume that $\Phi_T$ consists of the entry-wise Galois automorphisms. Let $f\in \Phi_T$ be the field automorphism of the largest order among those appearing in some element of $G$, so that $G\leq \PGL_2(q)\rtimes \langle f\rangle$. We may assume that $f$ maps each entry of a matrix to its $p^e$th power for some $e\in \mathbb{Z}_{\geq 0}$. Since $f$ must have order dividing $4$, we have $q\in\{p^e, p^{2e}, p^{4e}\}$.

First suppose that $q\equiv 1$ mod $4$. Consider the subgroups \begin{align*}
& C_f = \left\langle \begin{pmatrix}
\beta & 0 \\ 0 & \beta^{-1}
\end{pmatrix}\right\rangle/\mathbf{Z}(\SL_2(q)) \leq  C = \left\langle \begin{pmatrix}
\alpha & 0 \\ 0 & \alpha^{-1}
\end{pmatrix}\right\rangle/ \mathbf{Z}(\SL_2(q))<\PSL_2(q) ,\\&M = \left\langle \begin{pmatrix}
\alpha & 0 \\ 0 & \alpha^{-1}
\end{pmatrix},  \begin{pmatrix}
0 & 1 \\ -1 & 0
\end{pmatrix}\right\rangle / \mathbf{Z}(\SL_2(q)) < \PSL_2(q)\\& D =\left\langle \begin{pmatrix}
\alpha & 0 \\ 0 & \alpha^{-1}
\end{pmatrix},  \begin{pmatrix}
0 & 1 \\ -1 & 0
\end{pmatrix}, \begin{pmatrix}
\alpha & 0 \\ 0 & 1
\end{pmatrix} \right\rangle / \mathbf{Z}(\GL_2(q)) < \PGL_2(q) 
\end{align*} where $\alpha$ is a primitive element of $\mathbb{F}_q$ and $\beta$ is a primitive element of the fixed field of $f$. $C\cong C_{(q-1)/2}$, $M\cong D_{q-1}$ is dihedral of order $q-1$, and $D\cong D_{2(q-1)}$ is dihedral of order $2(q-1)$. Note that $|\PSL_n(q)|_2 = |(q-1)(q+1)/\gcd(2,q-1)|_2 = |q-1|_2$, so $M$ contains a Sylow $2$-subgroup of $T$. $\Phi_T$ normalises $M$ and $D$, so $D\Phi_T$ is a subgroup of $\PGammaL_2(q)$ that contains a Sylow $2$-subgroup of $\PGammaL_2(q)$. We may assume that $P\leq D\langle f\rangle$. If $\PGL_2(q)\leq G$ then $D\langle f\rangle \leq G$, so it satisfies the condition of \autoref{self-normalising}. If $\PGL_2(q)\not\leq G$, then $G\cap \PGL_2(q) = \PSL_2(q)$, so $P\cap \PGL_2(q) \leq \PSL_2(q)\cap D= M$, hence $M\langle df\rangle \leq G$ for some diagonal automorphism $d$ which always has order $2$. In this case $|G|=|\PSL_2(q)|o(f)$ and $|M\langle df\rangle|=|M|o(f)$, so we can apply \autoref{self-normalising} to $M\langle df\rangle$.

An element of $D$ can be written uniquely as the image in $\PGL_n(q)$ of a matrix in one of the following forms: $$\diag(1,x)=\begin{pmatrix}
1 & 0 \\ 0 & x
\end{pmatrix},\ \adiag(1,x)=\begin{pmatrix}
0 & 1 \\ x & 0
\end{pmatrix}$$ for some $x\in \mathbb{F}_q^\times$. Note that for each $x\in \mathbb{F}_q^\times$ and $1\leq r< o(f)$,
\begin{align*}
(\diag(1,x)f^r)^4 = \diag(1,x^{1+p^{er}+p^{2er}+p^{3er}}) 
\end{align*}
so $\diag(1,x)f$ has order dividing $4$ if and only if $x^{1+p^{er}+p^{2er}+p^{3er}}=1$. If $q=p^{e\gcd(4,r)}$, then this just means $x^4=1$. If $q=p^{2e\gcd(4,r)}$, then we get $x^{2(1+p^{er})} = 1$, so there are $2(p^{er}+1)$ such $x$. If $q=p^{4e\gcd(4,r)}$, then $r\in\{1,3\}$, $q=p^{4e}$ and $x^{1+p^e+p^{2e}+p^{3e}}=1$, so there are $1+p^e+p^{2e}+p^{3e}$ such $x$. 

For the second type of elements, we have 
\begin{align*}
(\adiag(1,x)f^r)^4 = \diag(1, x^{(1+p^{2er})(p^{er}-1)})\text{ (as elements of }\PGammaL_2(q))
\end{align*}
so $\adiag(1,x)f^r$ has order dividing $4$ if and only if $x^{(1+p^{2er})(p^{er}-1)}=1$. If $q=p^{e\gcd(4,r)}$ then this just becomes $1=1$, so it is true for all $x$. If $q=p^{2e\gcd(4,r)}$ then this means $x^{2(p^{er}-1)}=1$. If $q=p^{4e\gcd(4,r)}$ then $r=1$ or $3$, $q=p^{4e}$ and $x^{(1+p^{2e})(p^{e}-1)}=1$, so there are $(p^{2e}+1)(p^e-1) = p^{3e}-p^{2e}+p^e-1$ such $x$. 

If $q=p^{4e}$, then by \autoref{self-normalising}, 
\begin{align*}
&\rho_4(D\langle f\rangle)\\=& \frac{4 + 2(p^{2e}+1) + 2(1+p^e+p^{2e}+p^{3e}) + (q-1) + 2(p^{2e}-1) + 2(p^{3e}-p^{2e}+p^e-1)}{8(q-1)} \\=& \frac{q +4p^{3e} + 4p^{2e} + 4p^e + 3}{8(q-1)}
\end{align*} 
which is less than $7/15$ for all $p^e$. In particular, if $\PGL_2(q)\leq G$ then $\rho_4(G)\leq \rho_4(D\langle f\rangle)<7/15$. If $\PGL_2(q)\not\leq G$, then only half of the $q-1$ elements of the form $\adiag(1,x)$ lies in $M$, so \begin{align*}
&\rho_4(G)\leq \rho_4(M\langle df\rangle) \\\leq& \frac{4 + 2(p^{2e}+1) + 2(1+p^e+p^{2e}+p^{3e}) + (q-1)/2 + 2(p^{2e}-1) + 2(p^{3e}-p^{2e}+p^e-1)}{4(q-1)} \\=& \frac{q +8p^{3e} + 8p^{2e} + 8p^e + 7}{8(q-1)}
\end{align*} 
which is less than $7/15$ when $p^e\geq 5$.

If $q=p^{2e}$, then
\begin{align*}
&\rho_4(D\langle f\rangle) = \frac{4 + 2(p^{e}+1) + (q-1) + 2(p^{e}-1)}{4(q-1)} = \frac{q + 4p^{e} + 3}{4(q-1)}
\end{align*}
which is $\leq 7/15$ when $p^e\geq 7$. Therefore, if $\PGL_2(q)\leq G$ then $\rho_4(G)\leq \rho_4(D\langle f\rangle)\leq 7/15$. If $\PGL_2(q)\not< G$, then again $M\langle df\rangle$ contains only half of the elements of the form $\adiag(1,x)$, so $$\rho_4(G)\leq \rho_4(M\langle df\rangle) \leq \frac{4 + 2(p^e+1) + (q-1)/2 + 2(p^e-1)}{2(q-1)} = \frac{q + 8p^e + 7}{4(q-1)}$$ which is $\leq 7/15$ when $p^e\geq 11$.

If $q=p^e$ (so $f=1$), then $G$ does not involve any nontrivial field automorphism, so it is either $\PSL_2(q)$ or $\PGL_2(q)$. In this case it is easy to count the number of elements of order dividing $4$ using \cite[Lemma 2.4]{Green}; it was done in \autoref{PSL} for $\PSL_2(q)$, and for $\PGL$ we have $$\rho_4(\PGL_2(q)) = 
\frac{1+(3-c_1)(q+1)q/2+(3/2-c_2)q(q-1)}{(q-1)q(q+1)} \leq \frac{1+3q^2}{(q-1)q(q+1)} $$ where $c_1=0$ if $4\mid q-1$ and $c_1=2$ otherwise, and $c_2 = |\{x\in \mathbb{F}_q\mid x^4=-4\}|/4$. The right-hand side is $\leq 7/15$ when $q\geq 7$, and for $q=5$ we have exactly $\rho_4(\PGL_2(5))=7/15$. $\PSL_2(3)$ is soluble, so we exclude $q=3$.

When $q\equiv 3$ mod $4$, $q$ must be an odd power of $p$, so there is no field automorphism of order $2$ or $4$. Therefore we again get $G\leq \PGL_2(q)$, so $\rho_4(G)\leq 7/15$ except for the cases excluded above. The remaining cases are $q=3^2, 5^2, 7^2, 3^4$, with the additional condition $\PGL_2(q)\not\leq G$ for $q=7^2$ and $3^4$.
\end{itemize}

We computed $\rho_4(G)$ for each of the groups $G$ we found above and listed them in \autoref{4crosschartable}. Again, for some $T$, we only computed $\rho_4$ for some large $G$ when $\rho_4(G)\leq 7/(15|G:T|)$, which forces $\rho_4(H)\leq 7/15$ for all $T\leq H\leq G$. 
\end{proof}

\begin{table}[h]
\begin{center}
\caption{$\rho_4(G)$ for $G$ of Lie type in odd characteristic in the proof of \autoref{4_equichar}.}\label{4crosschartable}
\begin{tabular}{|c|c|c|}
\hline
$T$ & $G$ & $\rho_4(G)$\\
\hline
\multirow{2}{*}{$G_2(q)$} & $G_2(3)$ & $5989/265356=0.022\dots$\\
\cline{2-3}
& $\Aut(G_2(3))$ & $17221/530712=0.032\dots$\\ 
\hline
\multirow{3}{*}{$\PSL_n(q)$, $3\leq n\leq 9$} & $\Aut(\PSL_4(3))$ &$\frac{30611}{758160}=0.040\dots < \frac{7}{15|\Out(\PSL_4(3))|} = 7/60$ \\
\cline{2-3}
& $\Aut(\PSL_3(3))$ & $161/1404=0.114\dots$\\
\cline{2-3}
& $\PSL_3(3)$ & $205/1404=0.146\dots$ \\
\hline
\multirow{12}{*}{$\PSL_2(q)$} & $\PGammaL_2(81)$ & $\frac{1475}{26568}=0.055\dots< \frac{7}{15|\Out(\PSL_2(81))|}=7/60 $ \\
\cline{2-3}
& $\PGammaL_2(49)$ & $\frac{1397}{14700}=0.095\dots < \frac{7}{15|\Out(\PSL_2(49))|} =7/60$  \\
\cline{2-3}
& $\PGammaL_2(25)$ & $97/650=0.149\dots$\\
\cline{2-3}
& Intermediate groups & $319/3900=0.081\dots$ \\
\cline{3-3}
& between $\PSL_2(25)$ & $191/975=0.195\dots$\\
\cline{3-3}
& and $\PGammaL_2(25)$ & $569/3900=0.145\dots$\\
\cline{2-3}
& $\PSL_2(25)$ & $122/975=0.125\dots$\\
\cline{2-3}
& $\PGammaL_2(9)\cong \Aut(S_6)$ & $59/180=0.327\dots$\\
\cline{2-3}
& $S_6$ & $ 16/45=0.355\dots$\\
\cline{2-3}
& $\PGL_2(9)$ & $43/180=0.238\dots$ \\
\cline{2-3}
& Mathieu group $M_{10}$ & $79/180=0.438\dots$ \\
\cline{2-3}
& $\PSL_2(9)\cong A_6$ & $17/45=0.377\dots$\\
\hline
\multirow{4}{*}{$\PSU_n(q)$} & $\PSU_3(3)$ &$71/756=0.093\dots$ \\
\cline{2-3}
& $\PSigmaU_3(3)$ & $67/756=0.088\dots$\\
\cline{2-3}
& $\PSU_4(3)$ & $15061/204120=0.073\dots$\\
\cline{2-3}
& $\PGammaU_4(3)$ & $2069/51030=0.040\dots$ \\
\hline
\multirow{2}{*}{$\PSp_{2n}(q)$} & $\PSp_4(3)$ & $64/405=0.158\dots$ \\
\cline{2-3}
& $\Aut(\PSp_4(3))$ & $ 427/3240=0.131\dots$\\
\hline
\multirow{2}{*}{$\POmega_{2n+1}(q)$} & $\POmega_7(3)$ & $\frac{652339}{71646120}=0.0091\dots$\\
\cline{2-3}
&$\text{PSO}_7(3)$ & $\frac{313657}{35823060}=0.0087\dots$ \\
\hline
$\POmega_{2n}^+(q)$ & $\text{PGO}_{8}^+(3)\rtimes C_2$ & $\frac{283169599}{154755619200}=0.0018\dots< \frac{7}{15|\Out(\POmega_8^+(3))|_2} = \frac{7}{120}$ \\
\hline
\end{tabular}
\end{center}
\end{table}

Therefore, by \autoref{almost_simple_reduction}, \autoref{4_sporad}, \autoref{4_equichar} and \autoref{4_crosschar}, we can complete the proof of \autoref{mainthm1}.

\begin{proof}[Proof of \autoref{mainthm1}]
By \autoref{Prime_power}, $\epsilon_k\leq 1/2$ for all odd primes $k$, as well as powers of all primes other than $2$ and $3$. For $k=2$, as discussed in the introduction, the conjecture was proved by Wall \cite{Wall}, and further studied by Liebeck and MacHale \cite{LM}, Mann \cite{Mann}, and Berkovich \cite{Berkovich}. For $k=4$, we got $\epsilon_4=\epsilon_4^*=\rho_4(S_5)=7/15<1/2$ by \autoref{almost_simple_reduction}, \autoref{4_sporad}, \autoref{4_equichar} and \autoref{4_crosschar}. For $k=2^e$ with $e\geq 3$ and $k=3^e$ with $e\geq 2$, \autoref{M10}, \autoref{epsilon_9} and \autoref{examples_summary} shows that the conjecture fails. This completes the proof of part (1). Part (2) follows from \autoref{PSigmaL2} and \autoref{examples_summary}. Part (3) just says that there are many other examples as we saw in Section 3, which can be found in \autoref{smallepsilontable}.
\end{proof}

\section{Open questions}

Here we list some questions that we couldn't study in this paper.

\begin{qst}
{\upshape(1)} Is $\epsilon_3 = 7/20$?\\{\upshape(2)} Is there any $k$, especially prime powers, such that $\epsilon_k$ is exactly $1/2$?\\{\upshape(3)} Does the conjecture fail for every $k>1$ which is not a prime power? Equivalently, for primes $p,q>3$, when is $\epsilon_{pq}> 1/2$? \\{\upshape(4)} Does $\epsilon_k^*$ exist for all $k$? As a special case, is there a $k$ such that $\epsilon_k=1$ but no finite non-soluble group has exponent dividing $k$? \\{\upshape(5)} Is there a choice of $k\in \mathbb{Z}_{>0}$ and $r\in (0,1)$ such that there are infinitely many finite non-soluble monolithic groups $G$ with $\rho_k(G)\geq r$? What about $\rho_k^*(G)$?
\end{qst}

\section*{Acknowledgement}
We thank Hung P. Tong-Viet for helpful discussion.

\begin{table}[h]
\begin{center}
\caption{Table of $\epsilon_k$ for some small $k$}
\label{smallepsilontable}
\footnotesize
\begin{tabular}{|c|c|c|c|c|c|}
\hline 
$k$ & $\epsilon_k$ & $\epsilon_k^*$ &$>1/2$ & Example & Remark \\ 
\hline 
$1$ & $1/60$ & $\exists$ & No & $A_5$ &  \\ 
\hline 
$2$ & $4/15$ & $\exists$ & No & $A_5$ & \cite{Berkovich} \\ 
\hline
$3$ & $\in \left[7/20, 1/2\right]$  & & No &$A_5$ & \\
\hline
$4$ & $7/15 $ & $\exists$ & No & $S_5$ & \\
\hline
$5$ & $\in \left[5/12, 1/2\right]$ & & No & $A_5$ & \\
\hline
$6$ & $\geq 3/5$ & & Yes & $A_5$ &  \\ 
\hline 
$7$ & $\in \left[\frac{67}{156}\approx 0.43, 1/2\right]$ & & No & $\PSL_2(13)$ & \\
\hline
$8$ & $\geq 31/45$ & & Yes & Mathieu group $\text{M}_{10}$ & \\
\hline
$9$ & $\geq 191/364\approx 0.52$ & & Yes & $\PSigmaL_2(3^3)$ & \\
\hline
$10$ & $\geq 2/3$ & & Yes & $A_5$ &  \\ 
\hline 
$11$ & $\in \left[\frac{251}{552}\approx 0.45, 1/2\right]$ & & No & $\PSL_2(23)$ & \\
\hline
$12$ & $\geq 4/5$ & & Yes & $S_5$ & \\
\hline
$13$ & $\in \left[\frac{277}{600}\approx 0.46,1/2\right]$ & & No & $\PSL_2(25)$ & \\
\hline 
$15$ & $\geq 3/4$ & & Yes & $A_5$ & \\
\hline
$24$ & $\geq 21/25$ & & Yes & Index 2 subgroup of $S_5\wr C_2$ & not almost simple \\
\hline
$30n$, $n\in \mathbb{Z}_{>0}$ & $1$ & $\exists$ & Yes & $A_5$ & exponent \\
\hline
$35$ & $\geq 395/576\approx 0.68 $ & & Yes & $\PSL_3(4)=\text{M}_{21}$ & \\
\hline
$84n$, $n\in \mathbb{Z}_{>0}$ & $1$ & $\exists$ & Yes & $\PSL_2(7)=\PSL_3(2)$ & exponent\\
\hline
$91=7\times 13$ & $\geq 211/320\approx 0.66$ & & Yes & $\text{Sz}(8)$ & \\
\hline
$155=5\times 31$ & $\geq \frac{109971}{209920}\approx 0.52$ & & Yes & $\text{Sz}(32)$ & \\
\hline
$p^a$, $p\geq 5$ prime & $\in ((p-1)/(6p),1/2]$ & & No &  $\PGammaL_2(2^p)$ & \\
\hline
$2p$, $p> 3$ prime & $\geq \frac{2^{p}}{p(2^p-1)(2^p+1)} +  \frac{2(p-1)}{3p}$ & & Yes & $\PGammaL_2(2^p)$ & \\
\hline
$3p$, $p> 2$ prime & $\geq \frac{2\cdot 3^p}{p(3^p-1)(3^p+1)} +\frac{3(p-1)}{4p}$ & & Yes & $\PSigmaL_2(3^p)$ & \\
\hline
\end{tabular} 
\end{center}
\end{table}

\begin{table}[h]
\begin{center}
\caption{$\rho_4$ for sporadic almost simple groups}\label{rho4table}
\begin{tabular}{|c|c|}
\hline 
Group & $\rho_4$   \\ 
\hline 
$M_{11}$ & $1156/7920 = 0.1459\dots$\\
\hline
$M_{12}$ & $6832/95040 = 0.07188\dots$\\
\hline
$\Aut(M_{12})$ & $19504/190080 = 0.1026\dots$\\
\hline
$M_{22}$ & $42736/443520 = 0.0963\dots$\\
\hline
$\Aut(M_{22})$ & $67552 / 887040 = 0.0761\dots$\\
\hline
$M_{23}$ & $322576/10200960 = 0.0316\dots$\\
\hline
$M_{24}$ & $5143744/244823040 = 0.0210\dots$\\
\hline
$J_1$ & $1464/175560 = 0.0083\dots$\\
\hline
$J_2$ & $9136/604800 = 0.0151\dots$\\
\hline
$\Aut(J_2)$ & $73936/1209600 = 0.0611\dots$\\
\hline
$J_3$ & $549424/50232960 = 0.0109\dots$\\
\hline
$\Aut(J_3)$ & $1616464/100465920 = 0.0160\dots$ \\
\hline
$J_4$ & $2916489237917696/86775571046077562880 = 0.00003\dots$ \\
\hline
$Co_1$ & $379907764350976/4157776806543360000 = 0.00009\dots$ \\
\hline
$Co_2$ & $48644632576/42305421312000 = 0.0011\dots$\\
\hline
$Co_3$ & $347061376/495766656000 = 0.0007\dots$\\
\hline
$Fi_{22}$ & $42303609856 / 64561751654400 = 0.0006\dots$\\
\hline
$\Aut(Fi_{22})$ & $102513958912 / 129123503308800 = 0.0007\dots$ \\
\hline
$Fi_{23}$ & $168350817359872 / 4089470473293004800 = 0.00004\dots $\\
\hline
$Fi_{24}'$ & $4503927670511693824/1255205709190661721292800 =  0.000003\dots$ \\
\hline
$\Aut(Fi_{24}')$ & $7260932825889366016/2510411418381323442585600 = 0.000002\dots $ \\
\hline
$HS$ & $898976/44352000= 0.0202\dots$\\
\hline
$\Aut(HS)$ & $2632576/88704000 = 0.0296\dots$\\
\hline
$McL$ & $9377776/898128000 = 0.0104\dots$ \\
\hline
$\Aut(McL)$ & $10738576/1796256000 = 0.0059\dots$ \\
\hline
$He$ & $48193216/4030387200 = 0.0119\dots$ \\
\hline
$\Aut(He)$ & $65253056/8060774400= 0.0080\dots$   \\
\hline
$Ru$ & $486366976/145926144000 = 0.0033\dots$\\
\hline
$Suz$ & $2007237376/448345497600 = 0.0044\dots$\\
\hline
$\Aut(Suz)$ & $2871741952/896690995200 = 0.0032\dots  $ \\
\hline
$ON$ & $1808632288/ 460815505920 = 0.0039\dots$  \\
\hline
$\Aut(ON)$ &  $1811257120/921631011840 = 0.0019\dots $ \\
\hline
$HN$ & $ 136354053376/ 273030912000000 = 0.000499\dots $ \\
\hline
$\Aut(HN)$ & $367513221376/546061824000000 = 0.000673\dots$ \\
\hline
$Ly$ & $2569014039376 /51765179004000000 =  0.000049\dots$ \\
\hline
$Th$ & $12051296978176 / 90745943887872000 = 0.00013\dots$ \\
\hline
$B$ & $\frac{76622052349349502046437376}{ 4154781481226426191177580544000000}= 1.8\dots\times 10^{-8}$ \\
\hline
$M$ &  $\frac{98008936868544666550542251647672320000000}{ 114629592302884744244097149551190921445376} = 1.4\dots\times 10^{-13}$\\
\hline
\end{tabular} 
\end{center}
\end{table}

\end{document}